\newcommand{\citep}[0]{\cite}
\newcommand{\trace}{{\mathbf{Tr}}}
\newcommand{\ep}{\epsilon}
\newcommand{\diag}{{\rm diag}}
\DeclareFontFamily{U}{mathx}{\hyphenchar\font45}
\DeclareFontShape{U}{mathx}{m}{n}{
      <5> <6> <7> <8> <9> <10>
      <10.95> <12> <14.4> <17.28> <20.74> <24.88>
      mathx10
      }{}
\DeclareSymbolFont{mathx}{U}{mathx}{m}{n}
\DeclareMathAccent{\widecheck}{\mathalpha}{mathx}{"71}
\newcommand{\eps}{\epsilon}
\newcommand\bbR{\ensuremath{\mathbb{R}}} 
\newcommand\bbE{\ensuremath{\mathbb{E}}} 
\DeclarePairedDelimiter\abs{\lvert}{\rvert}%
\DeclarePairedDelimiterX{\norm}[1]{\lVert}{\rVert}{#1}
\DeclarePairedDelimiterX{\infdivx}[2]{(}{)}{%
  #1\;\delimsize\|\;#2%
}
\newcommand{\trK}{{\mathbf{K}}}
\newcommand{\trG}{{\mathbf{G}}}
\newcommand{\trDelta}{{\mathbf{\Delta}}}
\newcommand{\trX}{{\mathbf{X}}}
\newcommand{\trY}{{\mathbf{Y}}}
\newcommand\calH{\ensuremath{\mathcal{H}}} 
\newcounter{relctr}[subsection] 
\newcommand\labelrel[2]{%
  \begingroup
    \refstepcounter{relctr}%
    \stackrel{\textnormal{(\roman{relctr})}}{\mathstrut{#1}}%
    \originallabel{#2}%
  \endgroup
}
\crefname{equation}{}{}
\crefname{theorem}{Theorem}{Theorems}
\crefname{corollary}{Corollary}{Corollaries}
\crefname{example}{Example}{Examples}
\crefname{assumption}{Assumption}{Assumptions}
\crefname{lemma}{Lemma}{Lemmas}
\crefname{proposition}{Proposition}{Propositions}
\crefname{figure}{Figure}{Figures}
\crefname{table}{Table}{Tables}
\crefname{section}{Section}{Sections}
\crefname{appendix}{Appendix}{Appendices}
\Crefname{equation}{}{}
\Crefname{theorem}{Theorem}{Theorems}
\Crefname{corollary}{Corollary}{Corollaries}
\Crefname{example}{Example}{Examples}
\Crefname{lemma}{Lemma}{Lemma}
\Crefname{proposition}{Proposition}{Proposition}
\Crefname{figure}{Figure}{Figures}
\Crefname{table}{Table}{Tables}
\Crefname{section}{Section}{Sections}
\Crefname{appendix}{Appendix}{Appendices}
\newcommand\numberthis{\stepcounter{equation}\tag{\theequation}}
\let\oldnl\nl
\newcommand{\nonl}{\renewcommand{\nl}{\let\nl\oldnl}}
\title{On Controller Reduction in Linear Quadratic Gaussian Control with Performance Bounds\thanks{The work of Zhaolin Ren and Na Li is supported by NSF CNS 2003111, NSF AI institute 2112085, and ONR YIP N00014-19-1-2217. The work of Yang Zheng is supported by NSF ECCS-2154650. The work of Maryam Fazel was supported by NSF TRIPODS II 2023166, CCF 1839291, CCF 2007036, and CCF 2212261. Emails: zhaolinren@g.harvard.edu (Zhaolin Ren); zhengy@eng.ucsd.edu (Yang Zheng); mfazel@uw.edu (Maryam Fazel); and nali@seas.harvard.edu (Na Li).}}
\author[1]{Zhaolin Ren}
\author[2]{Yang Zheng}
\author[3]{Maryam Fazel}
\author[1]{Na Li}
\affil[1]{School of Engineering and Applied Science, Harvard University, USA.}
\affil[2]{ECE Department, University of California San Diego, La Jolla, USA}
\affil[3]{ECE Department, University of Washington, Seattle, USA.}
\begin{document}

\maketitle

\theoremstyle{plain}
\newtheorem{lemma}{\textbf{Lemma}}
\newtheorem{theorem}{\textbf{Theorem}}
\newtheorem{proposition}{\textbf{Proposition}}
\newtheorem{corollary}{\textbf{Corollary}}
\newtheorem{assumption}{\textbf{Assumption}}
\newtheorem{example}{\textbf{Example}}
\newtheorem{definition}{\textbf{Definition}}
\newtheorem{conjecture}{\textbf{Conjecture}}
\newtheorem{claim}{\textbf{Claim}}
\newtheorem{remark}{\textbf{Remark}}
\newtheorem{question}{\textbf{Question}}
\theoremstyle{definition}
\newcommand{\tr}{{{\mathsf T}}}
\newcommand{\mK}{{\mathsf{K}}}

\maketitle

\begin{abstract}%
 The problem of controller reduction has a rich history in control theory. Yet, many questions remain open. 
In particular, there exist very few results on the order reduction of general non-observer based controllers and the subsequent quantification of the closed-loop~performance. 
Recent developments in model-free policy optimization for Linear Quadratic Gaussian (LQG) control have highlighted the importance of this question. 
In this paper, we first propose a new set of sufficient conditions ensuring that a perturbed controller remains internally stabilizing. Based on this result, we illustrate how to perform order reduction of general non-observer based controllers using balanced truncation and modal truncation. 
We also provide explicit bounds on the LQG performance of the reduced-order controller. Furthermore, for single-input-single-output (SISO) systems, we introduce a new controller reduction technique by truncating \textit{unstable} modes. 
We illustrate our theoretical results with numerical simulations.  Our results will serve as valuable tools to design direct policy search algorithms for control problems with partial observations. 
\end{abstract}

\section{Introduction}



In many control applications, 
low-order controllers are~often preferred over high-order controllers, because they are simpler to maintain, more interpretable, and computationally less demanding \citep{anderson1987controller}. Thus, given a high-order controller, one often would like to approximate it using a lower-order controller that still stabilizes the plant whilst performing similarly on relevant closed-loop performance metrics, such as the Linear Quadratic Gaussian (LQG) cost. This problem is known as \emph{controller reduction}. Traditional approaches to controller reduction in LQG control have typically centered on reducing the order of observer-based controllers 
and providing error bounds between the performance of the truncated controller and that of the original controller \citep{anderson1987controller, zhou1995performance}. 

However, the problem of order-reduction for general non-observer based controllers has been less studied, especially in the context of LQG control. Recent progress in model-free policy optimization for linear control has highlighted the importance of order-reduction for general controllers~\citep{zheng2022escaping}. 
In particular, a natural problem in model-free policy optimization is to learn an optimal policy iteratively using policy gradient methods \citep{hu2022towards}. It has recently been shown that the optimization landscape of LQG control may contain saddle points in state-space dynamic controllers \citep{tang2021analysis}. While vanilla policy gradient ensures the convergence to stationary points under mild assumptions, these stationary points may be saddle points that are sub-optimal. 
As shown very recently in \cite{zheng2022escaping}, when a saddle point corresponds to a non-minimal controller, 
it is possible to escape the saddle point by finding a lower-order controller and 
adding a suitable random perturbation during policy gradient. 
%
It is thus natural to consider order-reduction~for general non-observer based controllers, such that we find a lower-order controller with approximately equivalent or lower LQG cost. 
Moreover, policy gradient for LQG control may also meet unstable controllers\footnote{A dynamic controller that has unstable modes itself but internally stabilizes the plant.}, but the results on order-reduction for unstable controllers are far less complete \citep{anderson1987controller,liu1986controller}. This motivates the main questions in this paper:
\vspace{-1mm}
\begin{center}
\begin{enumerate} \setlength{\itemsep}{0pt} 
    \item \textit{Can we perform controller reduction on general, possibly unstable, LQG controllers, such that the reduced-order controller remains internally stabilizing?}
    \item \textit{Can we provide explicit error bounds on the LQG performance of the reduced-order controller compared to the original controller?}
\end{enumerate}
\end{center}
\vspace{-1mm}
These questions are not only relevant for the reasons relating to policy optimization of LQG control \citep{tang2021analysis}, but are interesting in their own right for the model and controller reduction literature~\citep{anderson1987controller,obinata2012model}. 

\textbf{Our contributions.} In this paper, we provide positive answers to both questions. We first identify a novel set of sufficient conditions that ensure the stability of a perturbed controller (\cref{theorem:truncated_K_still_stable_conditions}), and then derive a new bound on the LQG cost of a perturbed controller under the assumption that the truncated component is stable and appropriately small (\cref{theorem:stable_K_truncation_result}). For general multiple-input and multiple-output (MIMO) systems, building on \cref{theorem:truncated_K_still_stable_conditions} and \cref{theorem:stable_K_truncation_result}, we then show (in \cref{section:controller_reduction_via_balanced_truncation} and \cref{section:controller_reduction_modal_truncation_stable} respectively) how balanced truncation and modal truncation may be applied to general (non observer-based, possibly unstable) LQG controllers to yield lower-order controllers with bounded LQG performance gap (compared to that of the original controller). Furthermore, for single-input and single-output (SISO) systems, we discuss in \cref{section:order_reduction_for_unstable_controller_SISO} how internal stability may be preserved even when the reduced-order controller has fewer unstable poles than the original controller. This opens the path of controller reduction via truncating unstable poles, a novel controller reduction technique which we illustrate both theoretically and empirically. Finally, in \cref{section:near-pole-zero cancellation_to_jordan block}, we characterize the connection between the existence of a ``small'' Jordan block with near pole-zero cancellation in SISO systems. 
This allows us to connect modal reduction with order reduction for minimal systems that are close to being non-minimal.




\subsection{Related work} 

Our work follows a long line of work in the controller reduction literature. 
A classical result is that when a controller $\trK$ and its reduced-order counterpart $\trK_r$ have the same number of unstable poles and no poles on the imaginary axis, assuming some other conditions involving the truncated component $\trDelta = \trK - \trK_r$ hold (see \cref{lemma:K_perturbed_still_stable_conditions_old}), the reduced-order controller $\trK_r$ will stabilize the original system \citep{anderson1987controller,doyle1981multivariable}. Thus, one common approach to controller reduction is to truncate the stable part of a controller, whilst keeping its unstable part intact. Popular methods to perform truncation of the stable part of a controller include modal truncation \citep{skelton1988dynamic,jonckheere1982singular}, balanced truncation \citep{enns1984model}, and Hankel norm approximation \citep{glover1983robust}. When the difference of the stable portion and its truncated portion satisfies a (frequency-weighted) error bound (see \cref{lemma:K_perturbed_still_stable_conditions_old}; cf. \cite[Section II.A]{anderson1987controller}), it guarantees that the truncated controller 
remains internally stabilizing. However, there appear to be no existing results providing an error bound on the LQG cost of the truncated controller from such a procedure for general (possibly non observer-based) controllers. In contrast, for observer-based controllers, there has been a significant line of work based on coprime factorizations \citep{vidyasagar1975coprime}, which not only yields reduced-order controllers that are internally stabilizing, but also guarantees LQG performance bounds for the resulting truncated controllers (cf. \cite{liu1986controller,anderson1987controller}). However, a key limitation is that these method only work for observer-based controllers.

A closely related but distinct research direction to controller reduction is the topic of open-loop model reduction \citep{obinata2012model,benner2017model,antoulas2000survey,antoulas2005approximation}. While techniques in model reduction often overlap with those in controller reduction (e.g. balanced truncation \citep{pernebo1982model}, based on the theory of balanced realization in \cite{moore1981principal}), modal truncation, and Hankel norm approximation \cite{zhou1995frequency}), tight approximation bounds for the open-loop controller do not translate, a priori, to tight performance bounds for the closed-loop system. For instance, while there has been work studying model reduction for general unstable systems \citep{zhou1999balanced, yang1993model, mirnateghi2013model}, it is unclear if such techniques can produce a stabilizing lower-order approximation of an unstable (but stabilizing) controller. 

Another important related topic is policy optimization for linear control problems. There has been significant recent work studying policy optimization for linear quadratic (LQ) control problems, for linear-quadratic-regulator (LQR) \citep{fazel2018global}, $\mathcal{H}_2$ linear control with $\mathcal{H}_\infty$ guarantees \citep{zhang2020policy,Hu2023connectivity},  as well as LQG problems \citep{tang2021analysis,zheng2022escaping}. In particular, as we explained earlier, the considerations outlined in \cite{zheng2022escaping} on escaping saddle points of the LQG problem was an important motivation for our work, where controller reduction is required. See \cite{hu2022towards} for a recent review.  



\subsection{Paper outline}  \label{subsection:notation}


The rest of this paper is structured as follows.  
We present the problem statement in \Cref{section:problem-statement}. In \Cref{section:robust-stability}, we first introduce \cref{theorem:truncated_K_still_stable_conditions}, which provides sufficient conditions such that a perturbed controller $\trK_r$ of $\trK$ is internally stabilizing. We further derive an upper bound on the LQG cost $J(\trK_r)$ (\cref{theorem:stable_K_truncation_result}). In  \cref{section:controller_reduction_via_balanced_truncation}, we study balanced truncation on the stable part of a controller. In \cref{section:order_reduction_modal}, we discuss modal truncation, where \cref{section:controller_reduction_modal_truncation_stable} studies modal truncation on the stable part of a controller, and \cref{section:order_reduction_for_unstable_controller_SISO} discusses controller reduction via truncation of unstable poles for SISO systems. 
\Cref{section:near-pole-zero cancellation_to_jordan block} presents the connection between near pole-zero cancellation and small Jordan block.  
We provide  numerical experiments to illustrate our theoretical results in \Cref{section:examples}. We conclude the paper in \Cref{section:conclusion}.  

\subsection{Notation} 
We denote the set of real-rational proper stable transfer functions as $\mathcal{RH}_{\infty}$ (i.e., all the poles are on the open left-half complex plane). For simplicity, we omit the dimension of transfer matrices. 
The state-space realization of a transfer function $\mathbf{G}(s) = C(sI - A)^{-1}B + D$ is denoted as
 $$ \mathbf{G}(s) = \left[\begin{array}{c|c} A & B \\\hline
     C & D\end{array}\right].
     $$ 
     We define the $\mathcal{L}_\infty$ norm for a transfer function $\trG(s)$ as 
$
\norm*{\trG}_{\mathcal{L}_\infty} \coloneqq \sup_{w \in \bbR} \sigma_{\max}(\trG(jw)),
$
where $\sigma_{\max}(\cdot)$ denotes the maximum singular value. 
When $\trG \in \mathcal{RH}_\infty$, its $\mathcal{H}_\infty$ norm is the same as its $\mathcal{L}_\infty$ norm \cite[Chapter 4.3]{zhou1996robust}. 
We define the $\mathcal{L}_2$ norm for $\trG(s)$ as 
$$\norm*{\trG}_{\mathcal{L}_2} \coloneqq \sqrt{\frac{1}{2\pi} \int_{-\infty}^\infty \trace(\trG(-jw)^\tr \trG(jw) dw}.$$
When $\trG$ is stable and 
strictly proper, its $\mathcal{H}_2$ norm is the same as its $\mathcal{L}_2$ norm \cite[Chapter 4.3]{zhou1996robust}.  We denote the set of transfer functions in $\mathcal{R}\mathcal{H}_\infty$ with finite $\mathcal{L}_2$ norm as $\mathcal{R}\mathcal{H}_2$. 
Throughout our proofs later, we often utilize submultiplicative-like inequalities to bound the $\mathcal{H}_\infty$ or $\mathcal{H}_2$ norm of products of transfer functions. For completeness, we summarize a list of such inequalities in \cref{appendix:background_norm_bounds}.

\section{Preliminaries and Problem Statement} \label{section:problem-statement}


\subsection{General non-observer based controllers}


Consider a strictly proper linear time-invariant (LTI) plant\footnote{For simplicity, we assume that there is a common $B$ in front of both the $u(t)$ and $w(t)$ terms.}
\begin{equation}\label{eq:LTI}
    \begin{aligned}
        \dot{x}(t) &= A x(t) + B u(t) + Bw(t),\\
        y(t)   &= Cx + v(t),
    \end{aligned}
\end{equation}
where $x(t) \in \mathbb{R}^{n},u(t)\in \mathbb{R}^{m},y(t)\in \mathbb{R}^{p}$ are the state vector, control action, and measurement vector at time $t$, respectively; $w(t)\in \mathbb{R}^{m}$ and $v(t)\in \mathbb{R}^{p}$ are external disturbances on the state and measurement vectors at time $t$, respectively. One basic yet fundamental control problem is to design a feedback controller (or policy) to stabilize the plant \cref{eq:LTI}. A standard approach for this problem is to use an observer-based controller of the form
\begin{equation}\label{eq:observer}
\begin{aligned}
\dot{\xi}(t) &= A \xi(t) +  B u(t) + L(y(t) - C\xi(t)) \\
u(t) &= -K\xi(t),
\end{aligned}
\end{equation}
where $\xi(t) \in \mathbb{R}^n$ is an estimated state, $L \in \mathbb{R}^{n \times p}$ is an observer gain, and $K \in \mathbb{R}^{m \times n}$ is a feedback gain. The observer and feedback gains are chosen such that $A - LC$ and $A - BK$ are stable, and this guarantees the closed-loop internal stability when applying the controller \cref{eq:observer} to the plant \cref{eq:LTI} \cite[Chapter 3.5]{zhou1996robust}. Note that the order of this observer-based controller must be the same with the system plant (i.e., the controller state $\xi(t)$ and the system state $x(t)$ have the same dimension). Order reduction for controllers in the form of \eqref{eq:observer} is discussed in \cite{liu1986controller,anderson1987controller,zhou1995performance}. 

In this paper, we consider a general non-observer based dynamic controller of the form
\begin{equation}\label{eq:Dynamic_Controller}
    \begin{aligned}
        \dot \xi(t) &= A_{\mK}\xi(t) + B_{\mK}y(t), \\
        u(t) &= C_{\mK}\xi(t),
    \end{aligned}
\end{equation}
where $\xi(t) \in \mathbb{R}^q$ is the internal state of the controller, and $A_{\mK},B_{\mK},C_{\mK}$ are matrices of proper dimensions that specify the dynamics of the controller. The dimension $q$ of the internal control variable $\xi$ is called the order of the dynamical controller~\cref{eq:Dynamic_Controller}. The controller in \cref{eq:Dynamic_Controller} is more suitable for model-free policy optimization as it does not explicitly depend on the system dynamics \citep{tang2021analysis,zheng2022escaping}.  
It is clear that the observer-based controller \cref{eq:observer} is a special case of \cref{eq:Dynamic_Controller} by taking $q = n$, and $A_\mK = A - BK - LC, B_{\mK} = L, C_{\mK} = -K$. 
By combining~\cref{eq:Dynamic_Controller} with~\cref{eq:LTI}, the closed-loop system is internally stable if and only if the closed-loop matrix 
\begin{equation} \label{eq:internal-stability}
    A_{\mathrm{cl}}:=\begin{bmatrix}
    A & BC_{\mK} \\
    B_{\mK}C & A_{\mK}
    \end{bmatrix}
\end{equation}
is stable \cite[Lemma 5.2]{zhou1996robust}.

\subsection{Problem statement}
Given an internally stabilizing controller $(A_{\mK}, B_\mK, C_\mK)$ satisfying \eqref{eq:internal-stability}, the \textit{controller reduction problem} \citep{anderson1987controller} is to find a new controller $(\hat{A}_{\mK}, \hat{B}_\mK, \hat{C}_\mK)$ of lower order $\hat{q} < q$ such that it still internally stabilizes the plant and does not significantly affect the closed-loop performance.  
In particular, we consider a normalized LQG control performance~\citep{mustafa1990controller,jonckheere1983new}, defined as 
\begin{equation} \label{eq:LQG-cost}
    J =\lim_{T \to \infty} \mathbb{E} \left[\frac{1}{T}\int_{0}^T x(t)^\tr C^\tr C {x}(t) + u(t)^\tr u(t) dt \right],
\end{equation}

We make the following two assumptions.
\begin{assumption} \label{assumption-minimal}
The plant \eqref{eq:LTI} is minimal, i.e., $(A,B)$ is controllable and $(C,A)$ is observable.
\end{assumption}

\begin{assumption} \label{assumption-white}
In plant~\eqref{eq:LTI}, the signals $w(t)\in \mathbb{R}^{m}$ and $v(t)\in \mathbb{R}^{p}$ are zero mean Gaussian white noise, each with a spectrum equal to the identity. 
\end{assumption}

\Cref{assumption-minimal} is standard and guarantees the existence of internally stabilizing controllers\footnote{The existence of internally stabilizing controllers only requires stabilizability and detectablity.}. If the plant is not minimal, we can always perform a lower-order minimal realization before designing a controller. 
\Cref{assumption-white} was used  to define the  \textit{normalized LQG control problem} in~\citep{mustafa1990controller,jonckheere1983new}. As we shall see next, this assumption simplifies the expression of LQG cost \cref{eq:LQG-cost} in the frequency domain. For the controller reduction problem, it may not be easy to work directly with the internal stability condition \cref{eq:internal-stability} in the state-space domain due to non-uniqueness of state-space realizations.  
It is more convenient to consider equivalent conditions in the frequency domain. In particular, the controller \cref{eq:Dynamic_Controller} can be represented as a transfer function $\mathbf{K} := C_{\mK}(sI - A_\mK)^{-1}B_\mK$. Let us define a new performance signal $\tilde{y} = Cx$. Some simple manipulations show that the closed-loop transfer function from $(w,v)$ to $(\tilde{y},u)$ is  
\begin{align}
\label{eq:y_tilde_u_w_v_freq_rship}
    \begin{bmatrix}
    \mathbf{\tilde{y}} \\
    \mathbf{u}
    \end{bmatrix} = \begin{bmatrix}
    (I - \mathbf{G}\mathbf{K})^{-1} \mathbf{G} & (I - \mathbf{G}\mathbf{K})^{-1} \mathbf{G} \mathbf{K} \\
    \mathbf{K}(I - \mathbf{G}\mathbf{K})^{-1} \mathbf{G} &  \mathbf{K} (I - \mathbf{G}\mathbf{K})^{-1}
    \end{bmatrix} \begin{bmatrix}
    \mathbf{w} \\
    \mathbf{v}
    \end{bmatrix},
\end{align}
\normalsize
where $\mathbf{G}(s) = C(sI - A)^{-1} B$. Then, we have the following condition for internal stability.
\begin{lemma}[\!{\cite[Lemma 5.3]{zhou1996robust}}] \label{lemma:stability-frequency}
The controller $\mathbf{K}$ in \eqref{eq:Dynamic_Controller} internally stabilizes the plant~\eqref{eq:LTI} if and only if the closed-loop transfer function from $(w,v)$ to $(\tilde{y},u)$ is stable\footnote{The standard result in \cite[Lemma 5.3]{zhou1996robust} uses a slightly different set of closed-loop transfer functions. Simple manipulations via $(I - \mathbf{G}\mathbf{K})^{-1}  = I + (I - \mathbf{G}\mathbf{K})^{-1} \mathbf{G} \mathbf{K}$ can show the equivalence.}, i.e.,
$$ \begin{bmatrix}
    (I - \mathbf{G}\mathbf{K})^{-1} \mathbf{G} & (I - \mathbf{G}\mathbf{K})^{-1} \mathbf{G} \mathbf{K} \\
    \mathbf{K}(I - \mathbf{G}\mathbf{K})^{-1} \mathbf{G} &  \mathbf{K} (I - \mathbf{G}\mathbf{K})^{-1}
    \end{bmatrix} \in \mathcal{RH}_\infty
$$
\end{lemma}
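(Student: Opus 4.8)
The plan is to combine two facts already available in the excerpt and reduce the claim to a single state-space/frequency-domain equivalence. By the characterization recalled at \eqref{eq:internal-stability}, the controller internally stabilizes the plant \eqref{eq:LTI} if and only if the closed-loop matrix $A_{\mathrm{cl}}$ is Hurwitz. By \eqref{eq:y_tilde_u_w_v_freq_rship}, the $2 \times 2$ block transfer matrix $M(s)$ appearing in the statement is exactly the closed-loop map from $(w,v)$ to $(\tilde y, u)$, which in state space is $M(s) = \mathcal{C}(sI - A_{\mathrm{cl}})^{-1}\mathcal{B}$ with $\mathcal{B} = \begin{bmatrix} B & 0 \\ 0 & B_\mK \end{bmatrix}$ and $\mathcal{C} = \begin{bmatrix} C & 0 \\ 0 & C_\mK \end{bmatrix}$ (there is no feedthrough, since $\tilde y = Cx$ and $u = C_\mK \xi$). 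It therefore suffices to prove $A_{\mathrm{cl}} \text{ Hurwitz} \iff M \in \mathcal{RH}_\infty$.

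The forward implication is immediate: if $A_{\mathrm{cl}}$ is Hurwitz, every pole of $M$ is an eigenvalue of $A_{\mathrm{cl}}$ and hence lies in the open left half-plane, and $M$ is strictly proper, so $M \in \mathcal{RH}_\infty$. For the converse I would argue by contraposition via the PBH test on the realization $(A_{\mathrm{cl}}, \mathcal{B}, \mathcal{C})$. Suppose $\lambda$ is an eigenvalue of $A_{\mathrm{cl}}$ with $\mathrm{Re}(\lambda) \geq 0$; I want to show it is both observable and controllable, so that it survives as a pole of $M$, contradicting $M \in \mathcal{RH}_\infty$. For observability, a putative unobservable eigenvector $(x_0, \xi_0)$ satisfies $C x_0 = 0$, $C_\mK \xi_0 = 0$, and the block structure of $A_{\mathrm{cl}}$ collapses the eigenvector equations to $A x_0 = \lambda x_0$ and $A_\mK \xi_0 = \lambda \xi_0$; observability of $(C,A)$ from \cref{assumption-minimal} forces $x_0 = 0$, and observability of the controller realization forces $\xi_0 = 0$. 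A dual computation on left eigenvectors, using controllability of $(A,B)$ and of the controller realization, rules out an uncontrollable mode. Hence no closed-right-half-plane eigenvalue of $A_{\mathrm{cl}}$ can be hidden, giving the contradiction.

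An alternative route, the one hinted at in the footnote, is to start from the textbook sensitivity-type internal-stability matrix $M_{\mathrm{Zhou}}$ (with diagonal blocks $(I-KG)^{-1}$ and $(I-GK)^{-1}$) and show $M \in \mathcal{RH}_\infty \iff M_{\mathrm{Zhou}} \in \mathcal{RH}_\infty$ blockwise: using the push-through identities $K(I-GK)^{-1} = (I-KG)^{-1}K$ and $(I-GK)^{-1}G = G(I-KG)^{-1}$, two blocks of $M$ coincide with blocks of $M_{\mathrm{Zhou}}$, while the other two differ from blocks of $M_{\mathrm{Zhou}}$ only by the identity (via $(I-GK)^{-1}GK = (I-GK)^{-1}-I$ and $K(I-GK)^{-1}G = (I-KG)^{-1}-I$); since $I \in \mathcal{RH}_\infty$ and $\mathcal{RH}_\infty$ is closed under addition, the two stability conditions are equivalent. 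The main obstacle in either route is the hidden-mode step of the converse: a priori an unstable $A_{\mathrm{cl}}$ could coexist with $M \in \mathcal{RH}_\infty$ if the controller injects an uncontrollable or unobservable unstable mode. Excluding this is precisely why the PBH argument needs, beyond \cref{assumption-minimal}, that the controller realization $(A_\mK, B_\mK, C_\mK)$ be minimal; I would state this standing convention explicitly (one always works with a minimal realization of $\mathbf{K}$), since the ``only if'' direction can fail otherwise. The remaining work — checking the push-through identities and the sign conventions from $\dot x = Ax + B(u+w)$, $u = C_\mK \xi$ — is routine.
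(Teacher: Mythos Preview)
The paper does not supply its own proof of this lemma: it is quoted directly from \cite[Lemma 5.3]{zhou1996robust}, with the footnote indicating that the only gap between Zhou's four-block matrix and the one displayed here is bridged by the identity $(I-\mathbf{G}\mathbf{K})^{-1} = I + (I-\mathbf{G}\mathbf{K})^{-1}\mathbf{G}\mathbf{K}$. Your second route is exactly this bridge, carried out in detail (push-through identities plus ``differs from a stable block by $I$''), so on that branch you are doing precisely what the paper sketches.

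Your first route --- a self-contained state-space argument via the closed-loop realization $(A_{\mathrm{cl}},\mathcal B,\mathcal C)$ and the PBH test --- is a genuine alternative that avoids invoking the textbook lemma altogether. It buys a cleaner picture of \emph{why} the equivalence holds (unstable eigenvalues of $A_{\mathrm{cl}}$ cannot be hidden from the $(w,v)\to(\tilde y,u)$ map), at the cost of needing minimality of the controller realization $(A_\mK,B_\mK,C_\mK)$ in addition to \cref{assumption-minimal}. You flag this correctly; the paper leaves it implicit (later sections always take a minimal realization of $\mathbf{K}$), and the ``only if'' direction is indeed false without it. One small correction: the closed-loop input matrix should read $\mathcal B=\begin{bmatrix} B & 0 \\ 0 & B_\mK \end{bmatrix}$ acting on $(w,v)$, but since $y=Cx+v$ feeds the controller, the $(2,2)$ block of $\mathcal B$ is $B_\mK$ as you wrote; just be careful that the $(1,1)$ block of the output map $\mathcal C$ picks off $\tilde y = Cx$ (no $v$), which you have. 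The remaining PBH computation and the push-through bookkeeping are routine, so either route closes without difficulty.
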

Furthermore, it is not difficult to see that the normalized LQG control cost \eqref{eq:LQG-cost} can be expressed conveniently in the frequency domain. For completeness, we provide a proof of \cref{lemma:LQG-cost} in \cref{appendix:proof_of_LQG-cost}.
\begin{lemma} \label{lemma:LQG-cost}
Under \Cref{assumption-white}, given an internally stabilizing controller $\mathbf{K}$ in \eqref{eq:Dynamic_Controller}, the normalized LQG control cost~\cref{eq:LQG-cost} can be expressed as follows
$$
J(\mathbf{K}) = \left\|\begin{bmatrix}
    (I - \mathbf{G}\mathbf{K})^{-1} \mathbf{G} & (I - \mathbf{G}\mathbf{K})^{-1} \mathbf{G} \mathbf{K} \\
    \mathbf{K}(I - \mathbf{G}\mathbf{K})^{-1} \mathbf{G} &  \mathbf{K} (I - \mathbf{G}\mathbf{K})^{-1}
    \end{bmatrix}\right\|_{\mathcal{H}_2}^2. 
$$
\normalsize
\end{lemma}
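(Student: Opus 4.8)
The plan is to prove the identity by showing that the time-averaged quadratic cost $J$ equals the stationary power of the performance output $\tilde z := (\tilde y, u) = (Cx, u)$, and that this power coincides with the squared $\mathcal{H}_2$ norm of the closed-loop map $\mathbf{T}$ from $(w,v)$ to $\tilde z$ displayed in \eqref{eq:y_tilde_u_w_v_freq_rship}. The first observation is purely algebraic: the running cost in \eqref{eq:LQG-cost} is exactly $x^\tr C^\tr C x + u^\tr u = (Cx)^\tr(Cx) + u^\tr u = \tilde z^\tr \tilde z$, so $J = \lim_{T\to\infty}\frac{1}{T}\,\mathbb{E}\int_0^T \tilde z(t)^\tr \tilde z(t)\,dt$. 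Since $\mathbf{K}$ internally stabilizes the plant, \cref{lemma:stability-frequency} gives $\mathbf{T}\in\mathcal{RH}_\infty$; moreover both $\mathbf{G}$ and $\mathbf{K}$ are strictly proper, so each of the four blocks of $\mathbf{T}$ is strictly proper, whence $\mathbf{T}\in\mathcal{RH}_2$ and $\|\mathbf{T}\|_{\mathcal{H}_2}=\|\mathbf{T}\|_{\mathcal{L}_2}$.

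Next I would pass from the time domain to the $\mathcal{H}_2$ norm. Let $h(t)$ denote the matrix-valued impulse response of $\mathbf{T}$, so that in the stationary regime $\tilde z(t) = \int_0^\infty h(\tau)\,d(t-\tau)\,d\tau$ with $d := (w,v)$. Under \cref{assumption-white} the disturbance is zero-mean white noise with $\mathbb{E}[d(t)d(s)^\tr] = I\,\delta(t-s)$, so forming the second moment and using the delta-correlation collapses the double convolution integral:
$$
\mathbb{E}\!\left[\tilde z(t)^\tr \tilde z(t)\right] = \trace\!\left(\int_0^\infty\!\!\int_0^\infty h(\tau)\,\mathbb{E}[d(t-\tau)d(t-\tau')^\tr]\,h(\tau')^\tr\,d\tau\,d\tau'\right) = \int_0^\infty \trace\!\left(h(\tau)^\tr h(\tau)\right)d\tau .
$$
Because the stabilized closed loop admits a stationary response, this instantaneous power is constant in $t$, so the transient from any fixed initial condition is washed out by the $\frac1T\int_0^T$ averaging and $J$ equals the same constant. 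Finally, Parseval/Plancherel for $\mathcal{L}_2$ transfer functions identifies $\int_0^\infty\trace(h(\tau)^\tr h(\tau))\,d\tau$ with $\frac{1}{2\pi}\int_{-\infty}^\infty\trace(\mathbf{T}(jw)^*\mathbf{T}(jw))\,dw = \|\mathbf{T}\|_{\mathcal{L}_2}^2 = \|\mathbf{T}\|_{\mathcal{H}_2}^2$, which is precisely the claimed expression.

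The step I expect to be the main obstacle is the rigorous handling of the white-noise input and the stationarity it induces: white noise is only a generalized process, so the relation $\mathbb{E}[d(t-\tau)d(t-\tau')^\tr]=I\,\delta(\tau-\tau')$ and the interchange of expectation with the convolution integrals must be justified (for instance by working with the stationary solution of the closed-loop It\^o SDE and its covariance, or by a mollification/limiting argument). An alternative that sidesteps the generalized-process issues is to stay in state space: form a realization $(A_{\mathrm{cl}}, B_{\mathrm{cl}}, C_{\mathrm{cl}})$ of $\mathbf{T}$, note that $A_{\mathrm{cl}}$ in \eqref{eq:internal-stability} is Hurwitz by internal stability, solve the Lyapunov equation $A_{\mathrm{cl}}P + P A_{\mathrm{cl}}^\tr + B_{\mathrm{cl}} B_{\mathrm{cl}}^\tr = 0$ for the stationary state covariance $P$, and obtain $J = \trace(C_{\mathrm{cl}} P C_{\mathrm{cl}}^\tr)$; the identity $\trace(C_{\mathrm{cl}} P C_{\mathrm{cl}}^\tr) = \|\mathbf{T}\|_{\mathcal{H}_2}^2$ is then the standard Gramian characterization of the $\mathcal{H}_2$ norm. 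Either route yields the lemma; the frequency-domain argument is shorter but needs the stochastic justification, while the Lyapunov route instead only requires checking that the realization of $\mathbf{T}$ is well posed, which follows from the strict properness of $\mathbf{G}$ and $\mathbf{K}$ noted above.
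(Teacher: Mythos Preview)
Your proposal is correct and follows essentially the same route as the paper: the paper introduces a state-space realization $(A_{\mathbf{T}},B_{\mathbf{T}},C_{\mathbf{T}},0)$ of $\mathbf{T}$, writes $z(t)=C_{\mathbf{T}}\int_0^t e^{A_{\mathbf{T}}(t-\tau)}B_{\mathbf{T}}\tilde w(\tau)\,d\tau$ (i.e., your impulse-response convolution with $h(\tau)=C_{\mathbf{T}}e^{A_{\mathbf{T}}\tau}B_{\mathbf{T}}$), uses the identity spectrum of $\tilde w$ to collapse to $\int_0^\infty\trace(h(\tau)^\tr h(\tau))\,d\tau$, and then invokes Parseval to obtain $\|\mathbf{T}\|_{\mathcal{H}_2}^2$. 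Your treatment is in fact more careful about stationarity, the time-averaging, and the generalized-process issues than the paper's own proof, which passes over these points without comment.
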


\section{Robust stability and LQG performance} \label{section:robust-stability}


Our main goal is to properly perturb the controller $\mathbf{K}$ to get a lower-order controller $\mathbf{K}_r$ such that  the closed-loop performance remains similar. In this section, we present two technical results that underpin our controller reduction results in \Cref{section:controller_reduction_via_balanced_truncation,section:order_reduction_modal}: 1) a new robust stability result (\Cref{theorem:truncated_K_still_stable_conditions}), and 2) an upper bound on the LQG performance for the new controller $\mathbf{K}_r$ (\Cref{theorem:stable_K_truncation_result}).

\subsection{A novel sufficient condition for internal stability} \label{section:a_novel_sufficient_condition}

Classical results on controller reduction often focus on the case when the truncated controller $\mathbf{K}_r$ has the same number of unstable poles as the original controller $\mathbf{K}$ (cf. \cite{liu1986controller,anderson1987controller}). Under such a condition, sufficient conditions are available using the $\mathcal{L}_\infty$ norm of terms involving the difference $\trK_r - \trK$ such that $\trK_r$ can still stabilize $\trG$ if $\trK$ stabilizes $\trG$. In particular, a widely-used condition is as follows. 

\begin{lemma}[{\cite[Section II.A]{anderson1987controller}}]
\label{lemma:K_perturbed_still_stable_conditions_old}
Let $\mathbf{G}$ be the transfer function of an LTI plant \cref{eq:LTI}, the controller $\mathbf{K}$ \cref{eq:Dynamic_Controller} internally stabilize the plant, and $\mathbf{K}_r$ be another dynamic controller. Denote $\trDelta \coloneqq \trK_r - \trK$. If
\begin{enumerate}
    \item $\mathbf{K}$ and $\mathbf{K}_r$ have the same number of poles in $Re(s) > 0$, and no poles on the imaginary axis, 
    \item either {\small $\norm*{\trDelta \mathbf{G}(I - \mathbf{K}\mathbf{G})^{-1}}_{\mathcal{L}_\infty}\! < \!1$}  or  {\small $\norm*{(I - \mathbf{G}\mathbf{K})^{-1}\mathbf{G} \trDelta}_{\mathcal{L}_\infty\!}\! < \!1$}, 
 \end{enumerate}
 \normalsize
 then $\mathbf{K}_r$ also internally stabilizes the plant $\mathbf{G}$.
\end{lemma}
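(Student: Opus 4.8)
The plan is to count closed-loop unstable poles through the winding number of a return-difference determinant, i.e., via the generalized Nyquist (argument) principle (see, e.g., \cite{zhou1996robust}). Recall from \cref{lemma:stability-frequency} that internal stability is equivalent to the feedback interconnection of $\trG$ and the controller having no closed-loop poles in the closed right-half plane. I would track these poles via the scalar map $s \mapsto \det(I - \trG(s)\mathbf{K}(s))$: as $s$ traverses the (suitably indented) Nyquist contour, the argument principle says its net clockwise encirclements of the origin equal (number of closed-loop right-half-plane poles) $-$ (number of open-loop right-half-plane poles $n_{\trG}^+ + n_{\mathbf{K}}^+$). Since $\mathbf{K}$ stabilizes $\trG$, the closed-loop count is zero, so $\det(I - \trG\mathbf{K})$ has exactly $-(n_{\trG}^+ + n_{\mathbf{K}}^+)$ net clockwise encirclements.

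First I would exploit the algebraic factorization
\begin{equation*}
I - \trG \mathbf{K}_r = (I - \trG\mathbf{K})\bigl[I - (I - \trG\mathbf{K})^{-1}\trG\trDelta\bigr],
\end{equation*}
which, upon taking determinants, yields
\begin{equation*}
\det(I - \trG\mathbf{K}_r) = \det(I - \trG\mathbf{K})\cdot\det\bigl(I - (I - \trG\mathbf{K})^{-1}\trG\trDelta\bigr).
\end{equation*}
Since winding numbers add under products, the encirclement count of the left-hand side splits as the sum of the two factors on the right. The first factor already supplies $-(n_{\trG}^+ + n_{\mathbf{K}}^+)$ encirclements, so the whole task reduces to proving that $M := (I - \trG\mathbf{K})^{-1}\trG\trDelta$ makes $\det(I - M)$ contribute zero net encirclements.

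This is exactly where condition 2 enters, through a small-gain homotopy. Observe that $(I - \trG\mathbf{K})^{-1}\trG$ is one of the stable closed-loop maps and $\trDelta = \mathbf{K}_r - \mathbf{K}$ has no imaginary-axis poles, so $M$ is pole-free on the contour and $\norm*{M}_{\mathcal{L}_\infty} < 1$ by hypothesis. Consider $\lambda \mapsto \det(I - \lambda M)$ for $\lambda \in [0,1]$: for each $s$ on the contour, $\sigma_{\max}(\lambda M(s)) \le \norm*{M}_{\mathcal{L}_\infty} < 1$ keeps every eigenvalue of $\lambda M(s)$ inside the unit disk, whence $\det(I - \lambda M(s)) \neq 0$; strict properness of $\trG$ sends $M \to 0$ on the large arc, so the determinant is nonzero there too. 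The winding number is thus constant along the homotopy, and at $\lambda = 0$ it is that of the constant $1$, namely $0$. Hence $\det(I - \trG\mathbf{K}_r)$ again has $-(n_{\trG}^+ + n_{\mathbf{K}}^+)$ net clockwise encirclements. Reading the argument principle backwards for $\mathbf{K}_r$ and invoking condition 1 ($n_{\mathbf{K}_r}^+ = n_{\mathbf{K}}^+$), the number of closed-loop right-half-plane poles equals $(n_{\trG}^+ + n_{\mathbf{K}_r}^+) + \bigl(-(n_{\trG}^+ + n_{\mathbf{K}}^+)\bigr) = n_{\mathbf{K}_r}^+ - n_{\mathbf{K}}^+ = 0$, so $\mathbf{K}_r$ internally stabilizes $\trG$. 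The alternative norm bound in condition 2 is treated symmetrically, using $\det(I - \trG\mathbf{K}_r) = \det(I - \mathbf{K}_r\trG)$ and the dual factorization through $(I - \mathbf{K}\trG)^{-1}$ with $M' := \trDelta\trG(I - \mathbf{K}\trG)^{-1}$.

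The hard part will be the pole bookkeeping rather than the small-gain estimate. I must verify that the right-half-plane poles of each return-difference determinant match the open-loop unstable-pole counts $n_{\trG}^+ + n_{\mathbf{K}}^+$ and $n_{\trG}^+ + n_{\mathbf{K}_r}^+$ without spurious cancellations, and that the Nyquist contour is indented consistently around any imaginary-axis poles of $\trG$; here condition 1 guarantees $\mathbf{K}$ and $\mathbf{K}_r$ contribute no such poles and $M$ is axis-pole-free, so both factorizations indent identically and the winding-number arithmetic is unambiguous. The conceptual crux is recognizing that condition 1 is precisely what converts the ``equal encirclement'' conclusion of the homotopy into ``zero closed-loop unstable poles,'' tying the two hypotheses together.
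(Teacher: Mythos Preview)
Your proposal is correct and follows essentially the same route as the paper: both arguments rest on the generalized Nyquist/argument principle, establish that $\det(I-\trG\trK_r)$ and $\det(I-\trG\trK)$ have the same winding number via a small-gain homotopy, and then invoke condition~1 to convert equal encirclements into zero closed-loop RHP poles. Your explicit factorization $I-\trG\trK_r=(I-\trG\trK)\bigl[I-(I-\trG\trK)^{-1}\trG\trDelta\bigr]$ is exactly what the paper uses implicitly when it writes $\underline{\sigma}\bigl((I-\trG\trK)(I+\ep(I-\trG\trK)^{-1}\trG(\trK-\trK_r))\bigr)$ inside its homotopy estimate; the two homotopies are the same object.

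One point where the paper is sharper than your sketch: in your step ``reading the argument principle backwards,'' you tacitly use $n_p(\trG\trK_r)=n_{\trG}^{+}+n_{\trK_r}^{+}$, i.e., no unstable pole--zero cancellation between $\trG$ and $\trK_r$, which you then flag as the ``hard part.'' The paper resolves this cleanly by a sandwich inequality: from $n_z(I-\trG\trK_r)\geq 0$ and the equal winding numbers one gets $n_p(\trG\trK)\leq n_p(\trG\trK_r)$, while trivially $n_p(\trG\trK_r)\leq n_p(\trG)+n_p(\trK_r)=n_p(\trG)+n_p(\trK)=n_p(\trG\trK)$ (using condition~1 and the fact that $\trK$ stabilizes $\trG$ so no cancellation occurs in $\trG\trK$). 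Equality throughout then simultaneously delivers no cancellation in $\trG\trK_r$ and $n_z(I-\trG\trK_r)=0$, after which \cite[Theorem~5.7]{zhou1996robust} closes the argument. This is worth adopting when you fill in the details.
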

This classical result underpins many controller reduction techniques in the literature; see \cite{anderson1987controller} for a review. \Cref{lemma:K_perturbed_still_stable_conditions_old} can be proved by combining Nyquist stability criterion \cite[Section IV]{doyle1981multivariable} with a classical result \cite[Theorem 5.7]{zhou1996robust}. 
For completeness, we provide a proof in \Cref{appendix:lemma_3_proof}. 
If the controller $\mathbf{K}$ is stable in the first place, then the first condition in \Cref{lemma:K_perturbed_still_stable_conditions_old} can be naturally satisfied by using any stable controller $\mathbf{K}_r$. When the controller $\mathbf{K}$ is unstable (i.e., $A_\mK$ in \cref{eq:Dynamic_Controller} is unstable), we might always need to preserve the unstable part in $\mathbf{K}$ in order to use \Cref{lemma:K_perturbed_still_stable_conditions_old}. However, it is unclear if it is necessary for a lower-order truncated controller $\trK_r$ to have the same number of unstable poles as $\trK$ in order to maintain closed-loop stability. 

In this section, we provide a novel set of sufficient conditions in \Cref{theorem:truncated_K_still_stable_conditions} ensuring that $\trK_r$ is still stabilizing, which makes no explicit assumptions on whether $\trK_r$ and $\trK$ have the same number of unstable poles. This technical result may be of independent interest. 


\vspace{1mm}
\begin{theorem}
\label{theorem:truncated_K_still_stable_conditions}
Let $\mathbf{G}$ be the transfer function of an LTI plant \cref{eq:LTI}, the controller $\mathbf{K}$ \cref{eq:Dynamic_Controller} internally stabilize the plant, and let $\mathbf{K}_r$ denote another controller. Denote $\trDelta \coloneqq \trK_r - \trK$. 
If $\trDelta (I - \trG \trK)^{-1}$ is stable and
\begin{align}
\label{eq:truncated_K_still_stable_conditions}
    { \max\left\{ \norm*{(I - \trG \trK)^{-1} \trG \trDelta}_{\calH_{\infty}} , \norm*{\trDelta (I - \trG \trK)^{-1} \trG}_{\calH_{\infty}} \right\} < 1,} 
\end{align}
\normalsize
then, $\trK_r$ also internally stabilizes $\trG$. 
\end{theorem}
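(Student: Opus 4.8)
The plan is to establish internal stability through the frequency-domain criterion of \cref{lemma:stability-frequency}: it suffices to prove that the four closed-loop blocks associated with $\trK_r$, namely $(I-\trG\trK_r)^{-1}\trG$, $(I-\trG\trK_r)^{-1}\trG\trK_r$, $\trK_r(I-\trG\trK_r)^{-1}\trG$ and $\trK_r(I-\trG\trK_r)^{-1}$, all lie in $\mathcal{RH}_\infty$. Throughout I would abbreviate $S:=(I-\trG\trK)^{-1}$; since $\trK$ stabilizes $\trG$, the maps $S\trG$, $S\trG\trK$, $\trK S\trG$ and $\trK S$ are already known to be stable.

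First I would record the resolvent factorizations obtained from $\trK_r=\trK+\trDelta$ by factoring $I-\trG\trK_r=I-\trG\trK-\trG\trDelta$. Using $S(I-\trG\trK)=I$ and the push-through identities $S\trG=\trG(I-\trK\trG)^{-1}$ and $X(I-YX)^{-1}=(I-XY)^{-1}X$, one gets $(I-\trG\trK_r)^{-1}=(I-S\trG\trDelta)^{-1}S$ and, on the input side, $(I-\trK_r\trG)^{-1}=(I-\trK\trG)^{-1}(I-\trDelta S\trG)^{-1}$. I then set $R:=(I-S\trG\trDelta)^{-1}$ and $R':=(I-\trDelta S\trG)^{-1}$. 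The two loop gains $S\trG\trDelta=(I-\trG\trK)^{-1}\trG\trDelta$ and $\trDelta S\trG=\trDelta(I-\trG\trK)^{-1}\trG$ are exactly the arguments of the two $\calH_\infty$ norms in \eqref{eq:truncated_K_still_stable_conditions}; since an $\calH_\infty$ norm is finite only for elements of $\mathcal{RH}_\infty$, both are stable, and together with the explicit hypothesis $\trDelta S\in\mathcal{RH}_\infty$ this gives three stable ``building blocks'' $\trDelta S$, $S\trG\trDelta$, $\trDelta S\trG$. Because their $\calH_\infty$ norms are strictly below $1$, the Neumann series $\sum_{k\ge0}(S\trG\trDelta)^k$ and $\sum_{k\ge0}(\trDelta S\trG)^k$ converge in the Banach algebra $\mathcal{RH}_\infty$, so $R$ and $R'$ are stable.

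Next I would substitute $\trK_r=\trK+\trDelta$ into each of the four blocks and regroup, using the intertwining identities $\trDelta R=R'\trDelta$ and $RS\trG=(S\trG)R'$ (both instances of the push-through rule) together with the expansion $RS=S+(S\trG\trDelta)RS$, so that every occurrence of the possibly unstable $\trDelta$ appears adjacent to a copy of $S$. This produces, for example, $(I-\trG\trK_r)^{-1}\trG=R(S\trG)$, $(I-\trG\trK_r)^{-1}\trG\trK_r=R(S\trG\trK)+R(S\trG\trDelta)$, $\trK_r(I-\trG\trK_r)^{-1}\trG=(\trK S\trG)R'+R'(\trDelta S\trG)$, and $\trK_r(I-\trG\trK_r)^{-1}=\trK S+(\trK S\trG)R'(\trDelta S)+R'(\trDelta S)$. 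Each is a sum of products of stable factors, hence lies in $\mathcal{RH}_\infty$, and \cref{lemma:stability-frequency} then yields that $\trK_r$ internally stabilizes $\trG$.

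The main obstacle, and the feature that distinguishes this from \cref{lemma:K_perturbed_still_stable_conditions_old}, is that $\trDelta$ itself need not be stable (indeed $\trK_r$ and $\trK$ may have different numbers of unstable poles), so no off-the-shelf small-gain theorem applies and the bare products $\trK R$, $R\trDelta$, etc. are individually ill-behaved. The delicate bookkeeping is to verify that, in every one of the four blocks, the push-through identities allow each $\trDelta$ to be paired with a sensitivity so that only the three stable building blocks and the stable resolvents $R,R'$ survive; this is most involved for the lower-right block $\trK_r(I-\trG\trK_r)^{-1}$, where both $\trK$ and $\trDelta$ sit on the left. It is precisely here that the hypothesis ``$\trDelta(I-\trG\trK)^{-1}$ stable'' does the work that the Nyquist encirclement / equal-unstable-pole-count assumption does in the classical result.
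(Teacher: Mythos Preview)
Your proposal is correct and follows essentially the same route as the paper: both arguments invoke \cref{lemma:stability-frequency}, introduce the sensitivity $S=(I-\trG\trK)^{-1}$ (the paper's $\trY$) and the resolvents $R=(I-S\trG\trDelta)^{-1}$, $R'=(I-\trDelta S\trG)^{-1}$, use the $\calH_\infty$ bound via small gain/Neumann series to make $R,R'$ stable, and then apply the push-through identities to rewrite each of the four closed-loop blocks as products of the stable building blocks $S\trG$, $S\trG\trK$, $\trK S$, $\trK S\trG$, $S\trG\trDelta$, $\trDelta S\trG$, $\trDelta S$, $R$, $R'$. Your decompositions of the four blocks coincide with the paper's (up to using $R'=I+R'(\trDelta S\trG)$ to expand), and you correctly isolate the lower-right block $\trK_r(I-\trG\trK_r)^{-1}$ as the place where the extra hypothesis $\trDelta S\in\mathcal{RH}_\infty$ is indispensable.
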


Unlike the proof of \Cref{lemma:K_perturbed_still_stable_conditions_old} that is based on Nyquist stability \cite[Section IV]{doyle1981multivariable},  \Cref{theorem:truncated_K_still_stable_conditions} can be proved directly from \Cref{lemma:stability-frequency}. If we can show 
\begin{equation} \label{eq:H-G-Kr}
 \begin{bmatrix}
    (I - \mathbf{G}\mathbf{K}_r)^{-1} \mathbf{G} & (I - \mathbf{G}\mathbf{K}_r)^{-1} \mathbf{G} \mathbf{K}_r \\
    \mathbf{K}_r(I - \mathbf{G}\mathbf{K}_r)^{-1} \mathbf{G} &  \mathbf{K}_r (I - \mathbf{G}\mathbf{K}_r)^{-1}
    \end{bmatrix} \in \mathcal{RH}_\infty,
\end{equation}
\normalsize
\Cref{lemma:stability-frequency} confirms that $\trK_r$ internally stabilizes $\trG$. Since $\mathbf{K}$ internally stabilizes $\mathbf{G}$, we know that 
\begin{equation} \label{eq:H-G-K}
    \begin{bmatrix}
    (I - \mathbf{G}\mathbf{K})^{-1} \mathbf{G} & (I - \mathbf{G}\mathbf{K})^{-1} \mathbf{G} \mathbf{K} \\
    \mathbf{K}(I - \mathbf{G}\mathbf{K})^{-1} \mathbf{G} &  \mathbf{K} (I - \mathbf{G}\mathbf{K})^{-1}
    \end{bmatrix} \in \mathcal{RH}_\infty.
\end{equation}
\normalsize
Motivated by \cite[Appendix C]{zheng2021sample}, the key idea in our proof is to relate the transfer functions in \eqref{eq:H-G-Kr} with those in \cref{eq:H-G-K}, and then to show that each of the four subblocks in \eqref{eq:H-G-Kr} is stable.

\begin{proof}
For notational simplicity, denote 
\begin{equation} \label{eq:definition-X-Y}
\mathbf{Y} \coloneqq (I - \mathbf{G}\mathbf{K})^{-1}, \; \mathbf{X} \coloneqq (I - \mathbf{G}\mathbf{K})^{-1} \mathbf{G}.
\end{equation}
By this definition, we naturally have $\mathbf{X} =  \mathbf{Y} \mathbf{G}$, and $\mathbf{G} = \mathbf{Y}^{-1}\mathbf{X}.$ Since $\mathbf{K}$ internally stabilizes $\mathbf{G}$, we know that $\mathbf{Y}$ and $\mathbf{X}$ are both stable. 
Observe that 
\begin{equation} \label{eq:X-Y-indentity}
\trY = (I - \trG \trK)^{-1} \trG \trK + (I - \trG \trK)^{-1} ( I - \trG \trK) = (I - \trG \trK)^{-1} \trG \trK + I,
\end{equation}
which implies $\mathbf{X}\trK = (I - \trG \trK)^{-1} \trG \trK$ is stable.  
By our definition in \cref{eq:definition-X-Y}, the condition \cref{eq:truncated_K_still_stable_conditions} is equivalent to $\norm*{\trX \trDelta}_{\calH_{\infty}} < 1$, $\norm*{\trDelta \trX}_{\calH_{\infty}} < 1$ and $\trDelta \trY$ is stable. 
According to the small-gain theorem \cite[Theorem 9.1]{zhou1996robust}, we know that 
both $
    (I - \trX \trDelta)^{-1}$ \text{and} $(I -  \trDelta \trX)^{-1}
$
exist and they are stable.  

We now proceed to show stability of the four subblocks in \eqref{eq:H-G-Kr}. 
A helpful calculation is that
\begin{align}
   (I - \mathbf{G}\mathbf{K}_r)^{-1} =  (I - \mathbf{G}(\mathbf{K} + \mathbf{\Delta}))^{-1} \nonumber 
   &= (I - \mathbf{Y}^{-1} \mathbf{X}(\mathbf{K} + \mathbf{\Delta}))^{-1}\nonumber \\
   &= (\mathbf{Y} - \mathbf{X}(\mathbf{K} + \mathbf{\Delta}))^{-1} \mathbf{Y} \nonumber \\
   &= (I - \mathbf{X} \mathbf{\Delta})^{-1} \mathbf{Y},
   \label{eq:I-GK_r_inv_G_expression_second_appearance}
\end{align}
where we have applied that fact that $\mathbf{Y} = \mathbf{X}\mathbf{K}+ I$ from \cref{eq:X-Y-indentity}. 

First, noting that $\mathbf{Y} \coloneqq (I - \trG \trK)^{-1}$ and using \cref{eq:I-GK_r_inv_G_expression_second_appearance}, we have
\begin{align}
(I - \mathbf{G}\mathbf{K}_r)^{-1}\mathbf{G} = (I - \mathbf{X} \mathbf{\Delta})^{-1} (I - \mathbf{G}\mathbf{K})^{-1}\mathbf{G},
\label{eq:I-GK_r_inv_G_expression}
\end{align}
which is stable since $(I - \mathbf{X} \mathbf{\Delta})^{-1}$ and $(I - \mathbf{G}\mathbf{K})^{-1}\mathbf{G}$ are both stable. 

Next observe that 
\begin{align}
       (I - \mathbf{G}\mathbf{K}_r)^{-1}\mathbf{G} \mathbf{K}_r 
       &\labelrel={eq:make_use_of_I-GK_r_inv_G_expression} (I - \mathbf{G}\mathbf{K}_r)^{-1}\mathbf{G} \mathbf{K} + (I - \trX \trDelta)^{-1} \trX \mathbf{\Delta} \nonumber \\
       &\labelrel={eq:make_use_of_I-GK_r_inv_G_expression_2nd} (I - \trX \trDelta)^{-1}(I - \trG \trK)^{-1}\mathbf{G} \mathbf{K} + (I - \trX \trDelta)^{-1} \trX \mathbf{\Delta},\label{eq:I-GK_r_inv_G_Kr_expression} 
    \end{align}
where we have applied  \cref{eq:I-GK_r_inv_G_expression_second_appearance} to derive (\ref{eq:make_use_of_I-GK_r_inv_G_expression}) and (\ref{eq:make_use_of_I-GK_r_inv_G_expression_2nd}). Since $(I - \trX \trDelta)^{-1}$, $(I - \trG \trK)^{-1}\mathbf{G} \mathbf{K}$, and $\trX \mathbf{\Delta}$ are all stable, we know that $(I - \mathbf{G}\mathbf{K}_r)^{-1}\mathbf{G} \mathbf{K}_r$ is stable. 

Next, we consider the term $\mathbf{K}_r (I - \mathbf{G} \mathbf{K}_r)^{-1}$. We have 
    \begin{align*}
         \mathbf{K}_r (I - \mathbf{G} \mathbf{K}_r)^{-1} 
        = & \ \trK \left( I - \mathbf{X} \mathbf{\Delta} \right)^{-1} \trY + \trDelta \left( I - \mathbf{X} \mathbf{\Delta} \right)^{-1} \trY \\
        \labelrel={eq:used_I-Xdelta_inv_identity} & \ \trK (I +  \mathbf{X} \trDelta(I - \mathbf{X}\trDelta)^{-1})\trY + \trDelta \left( I - \mathbf{X} \mathbf{\Delta} \right)^{-1} \trY \\
        = & \ \trK(I - \mathbf{G}\trK)^{-1}  + \trK (\mathbf{X}\mathbf{\Delta})(I - \mathbf{X} \mathbf{\Delta})^{-1}  \trY + \mathbf{\Delta} (I - \mathbf{X} \mathbf{\Delta})^{-1} \trY\\
        \labelrel={eq:used_push_through_identity} & \ \trK(I - \mathbf{G}\trK)^{-1} + \trK \trX  (I - \trDelta \mathbf{X})^{-1} \mathbf{\Delta}  \trY + (I - \trDelta\mathbf{X} )^{-1}\trDelta \trY. \numberthis \label{eq:Kr(I-GKr)_inv_expression}
    \end{align*}
    To derive (\ref{eq:used_I-Xdelta_inv_identity}), we used the fact that $(I - \mathbf{X}\trDelta)^{-1} = (I - \mathbf{X}\trDelta + \mathbf{X}\trDelta)(I - \mathbf{X}\trDelta)^{-1} = I + \mathbf{X}\trDelta(I - \mathbf{X}\trDelta)^{-1}.$
    To derive (\ref{eq:used_push_through_identity}), we used the push through identity $\trDelta(I - \trX \trDelta)^{-1} = (I - \trDelta \trX)^{-1} \trDelta$.  
It is now clear $\mathbf{K}_r (I - \mathbf{G} \mathbf{K}_r)^{-1}$ is stable, since $\trK(I - \mathbf{G}\trK)^{-1}$, $\trK \trX$,  $(I - \trDelta \mathbf{X})^{-1}$, and $ \mathbf{\Delta}  \trY$ are all stable.

Finally, to study the stability of $\trK_r(I - \trG \trK_r)^{-1} \trG$, we reuse the calculations from  $\trK_r(I - \trG \trK_r)^{-1}$ in \cref{eq:Kr(I-GKr)_inv_expression}, leading to
    \begin{align*}
        \mathbf{K}_r (I - \mathbf{G} \mathbf{K}_r)^{-1} \trG  
        = &\trK(I - \mathbf{G}\trK)^{-1} \trG + (I - \trDelta\mathbf{X})^{-1}\trDelta \trY \trG + \trK \trX  (I - \trDelta \mathbf{X})^{-1} \mathbf{\Delta}  \trY \trG, \\
        = &\trK(I - \mathbf{G}\trK)^{-1} \trG + (I - \trDelta\mathbf{X})^{-1}\trDelta \mathbf{X} + \trK \trX  (I - \trDelta \mathbf{X})^{-1} \mathbf{\Delta}  \mathbf{X},
    \end{align*}
which is stable, since $\trK(I - \mathbf{G}\trK)^{-1} \trG $, $\trK \trX $, $(I - \trDelta \mathbf{X})^{-1}$, and $\mathbf{\Delta}  \mathbf{X}$ are all stable.  
We have thus shown the stability of all four subblocks in \eqref{eq:H-G-Kr}. This completes the proof.
\end{proof}

\begin{remark}[\Cref{lemma:K_perturbed_still_stable_conditions_old} versus \Cref{theorem:truncated_K_still_stable_conditions}]
Both \Cref{lemma:K_perturbed_still_stable_conditions_old} and \Cref{theorem:truncated_K_still_stable_conditions} provide a set of sufficient  conditions for a reduced-order controller to internally stabilize the original plant. We comment here on some similarities and differences between the two sets of sufficient conditions. First, 
condition 2 in \cref{lemma:K_perturbed_still_stable_conditions_old} is stated in terms of the $\mathcal{L}_\infty$ norm, while \cref{eq:truncated_K_still_stable_conditions} in \Cref{theorem:truncated_K_still_stable_conditions} requires the $\mathcal{H}_\infty$ norm, thus condition 2 in \cref{lemma:K_perturbed_still_stable_conditions_old} is in fact a weaker requirement than \cref{eq:truncated_K_still_stable_conditions}\footnote{Note that an unstable transfer function may have finite $\mathcal{L}_\infty$ norm and only stable transfer functions can have finite $\mathcal{H}_\infty$ norm; for the definition of the $\mathcal{L}_\infty$ and $\mathcal{H}_\infty$ norm, the reader can refer to the notation in \Cref{subsection:notation}.}.  In addition, \cref{theorem:truncated_K_still_stable_conditions} requires that $\trDelta(I - \trG \trK)^{-1}$ should be stable, whilst \cref{lemma:K_perturbed_still_stable_conditions_old} makes no such requirement. However, a key advantage of \cref{theorem:truncated_K_still_stable_conditions} over \cref{lemma:K_perturbed_still_stable_conditions_old} is that it does not require $\trK$ and its reduced-order counterpart $\trK_r$ to have the same number of poles in $Re(s) > 0$ or to have no poles on the imaginary axis. This suggests that effective controller reduction may not necessitate preserving all unstable poles of $\trK$ --- this intuition will turn out to be useful for \cref{theorem:modal_truncation_SISO} (\Cref{section:order_reduction_for_unstable_controller_SISO}), which shows that controller reduction via unstable modal truncation is in fact possible.   
\end{remark}

\subsection{A new bound on the perturbed LQG cost} \label{subsection:robustLQGperformance}

\Cref{theorem:truncated_K_still_stable_conditions} presents sufficient conditions to guarantee the closed-loop stability using the reduced-order controller $\trK_r$. In many situations (such as policy optimization for LQG in  \cite{tang2021analysis,zheng2022escaping}), we also need to understand the closed-loop performance under this new controller $\trK_r$. Our next technical result show that if the error $\trDelta \coloneqq \trK_r - \trK$ is stable, the change of the LQG cost \cref{eq:LQG-cost} can also be bounded. The proof builds on the analysis techniques in \cref{theorem:truncated_K_still_stable_conditions}.

\begin{theorem}
\label{theorem:stable_K_truncation_result}
Let $\mathbf{G}$ be the transfer function of an LTI plant \cref{eq:LTI}, the controller $\mathbf{K}$ \cref{eq:Dynamic_Controller} internally stabilize the plant, and $\mathbf{K}_r$ denote another controller. Denote $\trDelta \coloneqq \trK_r - \trK$.  If 
\begin{align}
\label{eq:K_Kr_pole_stability_condition_1}
    \norm*{\trDelta}_{\mathcal{H}_\infty}< \frac{1}{ \norm*{(I - \mathbf{G}\mathbf{K})^{-1} \mathbf{G}}_{\mathcal{H}_\infty}},
\end{align}
\normalsize
 then the controller $\trK_r$ internally stabilizes the plant \cref{eq:LTI}, 
%
    and the resulting LQG cost  \cref{eq:LQG-cost}  satisfies 
     \begin{equation} \label{eq:LQG-cost-bound}
     J(\trK_r) \leq \frac{1}{\left(1 - \norm*{\mathbf{X}}_{\mathcal{H}_{\infty}} \norm*{\mathbf{\Delta}}_{\mathcal{H}_{\infty}}\right)^2} (J(\mathbf{K}) + S_1 + S_2),
     \end{equation}
     \normalsize
where with the notation $\mathbf{Y} := (I - \mathbf{G}\trK)^{-1}$, $\mathbf{X} := (I - \mathbf{G}\trK)^{-1} \mathbf{G}$ and $\mathbf{\Delta} := \mathbf{K}_r - \mathbf{K}$, 
we have
\begin{align}
    &S_1 := 2\norm*{\mathbf{\Delta}}_{\mathcal{H}_\infty} \norm*{\mathbf{X}}_{\mathcal{H}_2}(\norm*{\mathbf{X} \mathbf{K}}_{\mathcal{H}_2}) \label{eq:S1-S2} 
    + 2\norm*{\mathbf{\Delta}}_{\mathcal{H}_2} \!(\!\norm*{\trK \mathbf{Y}}_{\mathcal{H}_2} \norm*{\mathbf{Y}}_{\mathcal{H}_\infty}\! +\! \norm*{\trK \mathbf{X}}_{\mathcal{H}_2} \norm*{\mathbf{X}}_{\mathcal{H}_\infty}\!)\!(1 \!+\! \norm*{\trK \trX}_{\mathcal{H}_{
        \infty}} ),  \\
    &S_2 := \norm*{\mathbf{\Delta}}_{\mathcal{H}_\infty}^2 \norm*{\mathbf{X}}_{\mathcal{H}_2}^2 \!+\! \norm*{\mathbf{\Delta}}_{\mathcal{H}_2}^2 (\norm*{\mathbf{Y}}_{\mathcal{H}_{\infty}}^2 + \norm*{\mathbf{X}}_{\mathcal{H}_{\infty}}^2) (1\! +\! \norm*{\trK \trX}_{\mathcal{H}_{
        \infty}} )^2.  \nonumber
\end{align}
\normalsize
\end{theorem}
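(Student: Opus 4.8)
The plan is to establish internal stability via \Cref{theorem:truncated_K_still_stable_conditions} and then to bound $J(\trK_r)$ through \Cref{lemma:LQG-cost}, reusing the block identities already derived inside the proof of \Cref{theorem:truncated_K_still_stable_conditions}. Write $\trX := (I-\trG\trK)^{-1}\trG$ and $\trY := (I-\trG\trK)^{-1}$, both stable because $\trK$ stabilizes $\trG$. Since only stable transfer functions have finite $\mathcal{H}_\infty$ norm, the hypothesis $\norm*{\trDelta}_{\mathcal{H}_\infty} < 1/\norm*{\trX}_{\mathcal{H}_\infty}$ already forces $\trDelta$ to be stable, so $\trDelta\trY$ is a product of stable factors, and submultiplicativity gives $\norm*{\trX\trDelta}_{\mathcal{H}_\infty},\norm*{\trDelta\trX}_{\mathcal{H}_\infty}\le \norm*{\trX}_{\mathcal{H}_\infty}\norm*{\trDelta}_{\mathcal{H}_\infty}<1$. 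These are exactly the hypotheses of \Cref{theorem:truncated_K_still_stable_conditions}, so $\trK_r$ internally stabilizes $\trG$ and $J(\trK_r)$ is well defined.

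For the cost bound I would apply \Cref{lemma:LQG-cost}, which expresses $J(\trK_r)$ as the squared $\mathcal{H}_2$ norm of the $2\times 2$ closed-loop transfer matrix. Because the squared $\mathcal{H}_2$ norm integrates the squared Frobenius norm over frequency, it decomposes as the sum of the squared $\mathcal{H}_2$ norms of the four blocks, and correspondingly $J(\trK) = \norm*{\trX}_{\mathcal{H}_2}^2 + \norm*{\trX\trK}_{\mathcal{H}_2}^2 + \norm*{\trK\trX}_{\mathcal{H}_2}^2 + \norm*{\trK\trY}_{\mathcal{H}_2}^2$. From the proof of \Cref{theorem:truncated_K_still_stable_conditions} the top two blocks are $(I-\trX\trDelta)^{-1}\trX$ and $(I-\trX\trDelta)^{-1}(\trX\trK+\trX\trDelta)$ (see \cref{eq:I-GK_r_inv_G_expression,eq:I-GK_r_inv_G_Kr_expression}), while rearranging \cref{eq:Kr(I-GKr)_inv_expression} puts the bottom two blocks in the ``main-plus-correction'' form $\trK\trX + (I+\trK\trX)(I-\trDelta\trX)^{-1}\trDelta\trX$ and $\trK\trY + (I+\trK\trX)(I-\trDelta\trX)^{-1}\trDelta\trY$.

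I would then bound each block with the triangle inequality and the submultiplicative-like inequalities of \cref{appendix:background_norm_bounds}, invoking the small-gain estimate $\norm*{(I-\trX\trDelta)^{-1}}_{\mathcal{H}_\infty}, \norm*{(I-\trDelta\trX)^{-1}}_{\mathcal{H}_\infty} \le (1-\norm*{\trX}_{\mathcal{H}_\infty}\norm*{\trDelta}_{\mathcal{H}_\infty})^{-1}$. For the top blocks I split $\trX\trK_r = \trX\trK + \trX\trDelta$ so the resolvent factors out front; expanding $(\norm*{\trX\trK}_{\mathcal{H}_2}+\norm*{\trX}_{\mathcal{H}_2}\norm*{\trDelta}_{\mathcal{H}_\infty})^2$ produces the $J(\trK)$ contribution, the first (cross) term of $S_1$, and the first (quadratic) term of $S_2$. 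For the bottom blocks I write $\trK_r = \trK + \trDelta$ and use $\norm*{I+\trK\trX}_{\mathcal{H}_\infty}\le 1+\norm*{\trK\trX}_{\mathcal{H}_\infty}$ together with $\norm*{\trDelta\trX}_{\mathcal{H}_2}\le\norm*{\trDelta}_{\mathcal{H}_2}\norm*{\trX}_{\mathcal{H}_\infty}$ and $\norm*{\trDelta\trY}_{\mathcal{H}_2}\le\norm*{\trDelta}_{\mathcal{H}_2}\norm*{\trY}_{\mathcal{H}_\infty}$; this generates the remaining cross terms of $S_1$ and quadratic terms of $S_2$, with the factor $(1+\norm*{\trK\trX}_{\mathcal{H}_\infty})$ appearing precisely as in the statement. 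Summing the four squared block bounds and using $1 \le (1-\norm*{\trX}_{\mathcal{H}_\infty}\norm*{\trDelta}_{\mathcal{H}_\infty})^{-1} \le (1-\norm*{\trX}_{\mathcal{H}_\infty}\norm*{\trDelta}_{\mathcal{H}_\infty})^{-2}$ lets me pull the single prefactor $(1-\norm*{\trX}_{\mathcal{H}_\infty}\norm*{\trDelta}_{\mathcal{H}_\infty})^{-2}$ out uniformly, yielding \cref{eq:LQG-cost-bound}.

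The step I expect to be the main obstacle is the asymmetry between the two rows: the top blocks carry $(I-\trX\trDelta)^{-1}$ on the left, whereas the bottom blocks carry $\trK_r$ on the left and the \emph{other} resolvent $(I-\trDelta\trX)^{-1}$ in the middle. Reconciling these is exactly where the push-through identity $\trDelta(I-\trX\trDelta)^{-1}=(I-\trDelta\trX)^{-1}\trDelta$ (already used in the proof of \Cref{theorem:truncated_K_still_stable_conditions}) keeps every factor in the correction terms stable after the split $\trK_r=\trK+\trDelta$, and the delicate part is the bookkeeping that routes each cross term and each quadratic term into exactly the stated $S_1$ and $S_2$ rather than a looser but equivalent expression.
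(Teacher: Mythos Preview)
Your proposal is correct and follows essentially the same route as the paper: verify the hypotheses of \Cref{theorem:truncated_K_still_stable_conditions} from \cref{eq:K_Kr_pole_stability_condition_1} (using that finite $\mathcal{H}_\infty$ norm forces $\trDelta$ stable), then bound each of the four blocks of \Cref{lemma:LQG-cost} via the identities \cref{eq:I-GK_r_inv_G_expression,eq:I-GK_r_inv_G_Kr_expression,eq:Kr(I-GKr)_inv_expression} together with the submultiplicative and small-gain inequalities, and finally absorb the mixed powers of $(1-\norm*{\trX}_{\mathcal{H}_\infty}\norm*{\trDelta}_{\mathcal{H}_\infty})^{-1}$ into a single $(1-\norm*{\trX}_{\mathcal{H}_\infty}\norm*{\trDelta}_{\mathcal{H}_\infty})^{-2}$ prefactor. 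Your factored form $(I+\trK\trX)(I-\trDelta\trX)^{-1}\trDelta\trY$ for the bottom-row correction is just the paper's two-term sum regrouped, and the bookkeeping you flag as the delicate part is exactly what the paper carries out in \cref{eq:upper-bound-3,eq:upper-bound-4}.
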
   

\begin{proof}
We first show via \cref{theorem:truncated_K_still_stable_conditions} that $\trK_r$ internally stabilizes $\trG$. If \cref{eq:K_Kr_pole_stability_condition_1} holds, it follows that 
\begin{align*}
\norm*{(I - \trG \trK)^{-1} \trG \trDelta}_{\mathcal{H}_\infty} &\leq \norm*{(I - \trG \trK)^{-1} \trG }_{\mathcal{H}_\infty}  \norm*{\trDelta}_{\mathcal{H}_\infty} < 1.
\end{align*} 
Similarly, it can be verified that 
$
\norm*{\trDelta(I - \trG \trK)^{-1} \trG}_{\mathcal{H}_\infty} < 1.
$
We also have that $\trDelta(I - \trG \trK)^{-1}$ is stable since $\trDelta$ and $(I - \trG \trK)^{-1}$ are both stable. Therefore, the conditions in \cref{theorem:truncated_K_still_stable_conditions} are all satisfied, which implies then that $\trK_r$ internally stabilizes $\trG$.

We then proceed to upper bound the LQG cost $J(\mathbf{K}_r)$ for the new reduced-order controller $\mathbf{K}_r$. From \Cref{lemma:LQG-cost}, we know that 
\begin{equation*}
    J(\mathbf{K}_r) \!=\! \norm*{(I \!-\! \mathbf{G}\mathbf{K}_r)^{-1}\mathbf{G}}_{\mathcal{H}_2}^2 + \norm*{(I \!-\! \mathbf{G}\mathbf{K}_r)^{-1}\mathbf{G} \trK_r}_{\mathcal{H}_2}^2 + \norm*{\trK_r(I \!-\! \mathbf{G}\mathbf{K}_r)^{-1}\mathbf{G}}_{\mathcal{H}_2}^2 + \norm*{\trK_r(I \!-\! \mathbf{G}\mathbf{K}_r)^{-1}}_{\mathcal{H}_2}^2. 
\end{equation*}
Our strategy is to bound each term above using the norms of the corresponding terms when the original controller $\mathbf{K}$ is applied\footnote{This process will frequently use some standard norm inequalities listed in \Cref{appendix:background_norm_bounds}.}. 

    We first observe that by \cref{eq:I-GK_r_inv_G_expression}, the following inequality  holds
    \begin{align}
        \norm*{(I - \mathbf{G}\mathbf{K}_r)^{-1}\mathbf{G}}_{\mathcal{H}_2} &= \norm*{(I - \mathbf{X} \mathbf{\Delta})^{-1} (I - \mathbf{G}\mathbf{K})^{-1}\mathbf{G}}_{\mathcal{H}_2} \nonumber \\
        &\leq \norm*{(I - \mathbf{X} \mathbf{\Delta})^{-1}}_{\mathcal{H}_{\infty}}\norm*{(I - \mathbf{G}\mathbf{K})^{-1}\mathbf{G}}_{\mathcal{H}_2} \nonumber \\
        &\leq \frac{1}{1 - \norm*{\mathbf{X}}_{\mathcal{H}_{\infty}} \norm*{\mathbf{\Delta}}_{\mathcal{H}_\infty}} \norm*{(I - \mathbf{G}\mathbf{K})^{-1}\mathbf{G}}_{\mathcal{H}_2}, \label{eq:first_K_r_sub-block_H2_diff}
    \end{align}
    where we have applied standard norm inequalities in \Cref{lemma:G1_G_2_norm_ineq_H_L_mixed_inf} and \Cref{lemma:small-gain-type-inequality}. 
    Next, observe that
    \begin{align}
       \norm*{(I - \mathbf{G}\mathbf{K}_r)^{-1}\mathbf{G} \mathbf{K}_r}_{\mathcal{H}_2} &= \norm*{(I - \mathbf{G}\mathbf{K}_r)^{-1}\mathbf{G} (\mathbf{K} + \mathbf{\Delta})}_{\mathcal{H}_2} \nonumber \\
       &\leq \norm*{(I - \mathbf{G}\mathbf{K}_r)^{-1}\mathbf{G} \mathbf{K}}_{\mathcal{H}_2} + \norm*{(I - \mathbf{G}\mathbf{K}_r)^{-1}\mathbf{G} \mathbf{\Delta}}_{\mathcal{H}_2} \nonumber \\
       &\leq \frac{1}{1 - \norm*{\mathbf{X}}_{\mathcal{H}_{\infty}} \norm*{\mathbf{\Delta}}_{\mathcal{H}_\infty}} \left(\norm*{(I - \mathbf{G}\mathbf{K})^{-1}\mathbf{G} \mathbf{K}}_{\mathcal{H}_2} \right) \nonumber \\
       &\qquad \qquad + \frac{\norm*{\mathbf{\Delta}}_{\mathcal{H}_{\infty}}}{1 - \norm*{\mathbf{X}}_{\mathcal{H}_{\infty}} \norm*{\mathbf{\Delta}}_{\mathcal{H}_\infty}} \left(\norm*{(I - \mathbf{G}\mathbf{K})^{-1}\mathbf{G}}_{\mathcal{H}_2} \right), \label{eq:eq:12_K_r_sub-block_H2_diff}
    \end{align}
    where we used \cref{eq:first_K_r_sub-block_H2_diff} and similar norm inequalities 
     to derive the last inequality.
We then consider the term $\mathbf{K}_r (I - \mathbf{G} \mathbf{K}_r)^{-1}$. From \cref{eq:Kr(I-GKr)_inv_expression} in the proof of \cref{theorem:truncated_K_still_stable_conditions}, we have 
    \begin{align*}
        \mathbf{K}_r (I - \mathbf{G} \mathbf{K}_r)^{-1} = \trK(I - \mathbf{G}\trK)^{-1} + \trK \trX  (I - \trDelta \mathbf{X})^{-1} \mathbf{\Delta}  (I - \mathbf{G} \mathbf{K})^{-1} + (I - \trDelta\mathbf{X} )^{-1}\trDelta (I - \mathbf{G}\trK)^{-1},
    \end{align*}
leading to the following upper bound
    \begin{align} \label{eq:upper-bound-3}
        \norm*{\mathbf{K}_r (I - \mathbf{G} \mathbf{K}_r)^{-1}}_{\mathcal{H}_2} 
        \leq \norm*{\trK(I - \mathbf{G}\trK)^{-1}}_{\mathcal{H}_2} + \norm*{\mathbf{\Delta}}_{\mathcal{H}_2} 
        \frac{\norm*{(I - \mathbf{G} \trK)^{-1}}_{\mathcal{H}_{\infty}} \left(1 + \norm*{\trK \trX}_{\mathcal{H}_{
        \infty}} \right)}{1 - \norm*{\mathbf{X}}_{\mathcal{H}_{\infty}} \norm*{\mathbf{\Delta}}_{\mathcal{H}_{\infty}}}. 
    \end{align}
    %
    %
    Similarly, we can derive the following upper bound
    \begin{align} \label{eq:upper-bound-4}
        \norm*{\mathbf{K}_r (I \!-\! \mathbf{G} \mathbf{K}_r)^{-1}\trG}_{\mathcal{H}_2} 
        \leq \norm*{\trK(I \!-\! \mathbf{G}\trK)^{-1}\trG}_{\mathcal{H}_2} + \norm*{\mathbf{\Delta}}_{\mathcal{H}_2} 
        \frac{\norm*{(I \!-\! \mathbf{G} \trK)^{-1}\trG}_{\mathcal{H}_{\infty}} \left(1 \!+\! \norm*{\trK \trX}_{\mathcal{H}_{
        \infty}} \right)}{1 - \norm*{\mathbf{X}}_{\mathcal{H}_{\infty}} \norm*{\mathbf{\Delta}}_{\mathcal{H}_{\infty}}}. 
    \end{align}

Combining the bounds in \cref{eq:first_K_r_sub-block_H2_diff}, \cref{eq:eq:12_K_r_sub-block_H2_diff}, \cref{eq:upper-bound-3} and  \cref{eq:upper-bound-4}, we arrive at the desired bound as follows 
\begin{align*}
J(\mathbf{K}_r) 
\leq  \left(\frac{1}{1 - \norm*{\mathbf{X}}_{\mathcal{H}_{\infty}} \norm*{\mathbf{\Delta}}_{\mathcal{H}_{\infty}}}\right)^2 (J(\mathbf{K}) + S_1 + S_2),
\end{align*}
where $S_1$ and $S_2$ are defined in \cref{eq:S1-S2} (recalling the notation $\mathbf{Y} = (I - \mathbf{G}\trK)^{-1},\mathbf{X} = (I - \mathbf{G}\trK)^{-1} \trG$ in \cref{eq:definition-X-Y}), and 
$$
    J(\mathbf{K}) \!=\! \norm*{(I \!-\! \mathbf{G}\mathbf{K})^{-1}\mathbf{G}}_{\mathcal{H}_2}^2 + \norm*{(I \!-\! \mathbf{G}\mathbf{K})^{-1}\mathbf{G} \trK}_{\mathcal{H}_2}^2 + \norm*{\trK(I \!-\! \mathbf{G}\mathbf{K})^{-1}\mathbf{G}}_{\mathcal{H}_2}^2 + \norm*{\trK(I \!-\! \mathbf{G}\mathbf{K})^{-1}}_{\mathcal{H}_2}^2. 
$$
This finishes the proof. 
\end{proof}

\Cref{theorem:stable_K_truncation_result} shows that as long as the truncation error is bounded as in \cref{eq:K_Kr_pole_stability_condition_1}, the reduced-order controller $\mathbf{K}_r$ still internally stabilizes the plant. Note that the bound in \cref{eq:K_Kr_pole_stability_condition_1} can be verified since $\mathbf{K}$ and $\mathbf{G}$ are known by the designer. Furthermore, the upper bound~\eqref{eq:LQG-cost-bound} implies that 
$$
\left|\frac{J(\mathbf{K}_r) - J(\mathbf{K})}{J(\mathbf{K})}\right| \leq \mathcal{O}(\norm*{\mathbf{\Delta}}_{\mathcal{H}_{\infty}}).
$$
When the truncation error is small (measured by $\norm*{\mathbf{\Delta}}_{\mathcal{H}_{\infty}}$), the change of LQG cost is also small. Similar bounds like \cref{eq:LQG-cost-bound} seem to be less studied for general non-observer based controllers in the literature \citep{anderson1987controller}. Most existing bounds assume an observed-based controller in \cref{eq:observer} (cf. \cite{liu1986controller,anderson1987controller,zhou1995performance}), and most of the techniques therein rely on coprime factorization \citep{vidyasagar1975coprime}. \Cref{theorem:truncated_K_still_stable_conditions} and \Cref{theorem:stable_K_truncation_result} work for any perturbed controller $\mathbf{K}_r$ satisfying the assumptions therein. In the next two sections, we show how to use balanced and modal truncation strategies to derive suitable reduced-order controllers $\mathbf{K}_r$.

\section{Controller reduction via balanced truncation}
\label{section:controller_reduction_via_balanced_truncation}
In this section, we discuss controller reduction strategies using balanced truncation and apply \Cref{theorem:truncated_K_still_stable_conditions} and \Cref{theorem:stable_K_truncation_result} to derive stability and performance guarantees. 
%
\subsection{Balanced truncation}
We begin with a result reviewing the following well-known fact about balanced truncation: for asymptotically stable transfer functions, under appropriate assumptions, a reduced-order transfer function resulting from balanced truncation is also asymptotically stable. 
\vspace{2mm}
\begin{lemma}[{\cite[Theorem 3.2]{pernebo1982model}, \cite[Theorem 7.9]{antoulas2005approximation}}]
\label{lemma:truncated_distinct_evals_assumption}
Let $\mathbf{P}$ be an asymptotically stable transfer function  (i.e. all poles are in the open left-half plane) with a balanced minimal state-space realization
\begin{align*}
    \mathbf{P} = \left[\begin{array}{c c |c}
        A_{11} & A_{12} & B_1 \\
        A_{21} & A_{22} & B_2 \\
        \hline
        C_1 & C_2 & D
    \end{array}\right], \mbox{ s.t. }&\begin{bmatrix}
    A_{11} & A_{12} \\
    A_{21} & A_{22}
    \end{bmatrix}\!\begin{bmatrix} 
    \Sigma_1 & 0 \\
    0 & \Sigma_2
    \end{bmatrix} \!+\! \begin{bmatrix}
    \Sigma_1 & 0 \\
    0 & \Sigma_2
    \end{bmatrix}\! \begin{bmatrix}
    A_{11} & A_{12} \\
    A_{21} & A_{22}
    \end{bmatrix}^\tr\! =\! -\begin{bmatrix}
    B_1 \\
    B_2
    \end{bmatrix}\!\begin{bmatrix}
    B_1 \\
    B_2
    \end{bmatrix}^\tr \\
    &\begin{bmatrix}
    \Sigma_1 & 0 \\
    0 & \Sigma_2
    \end{bmatrix} \!\begin{bmatrix}
    A_{11} & A_{12} \\
    A_{21} & A_{22}
    \end{bmatrix}\! +\! \begin{bmatrix}
    A_{11} & A_{12} \\
    A_{21} & A_{22}
    \end{bmatrix}^\tr \! \begin{bmatrix}
    \Sigma_1 & 0 \\
    0 & \Sigma_2
    \end{bmatrix} \!=\! -\begin{bmatrix}
    C_1^\tr \\ 
    C_2^\tr
    \end{bmatrix} \! \begin{bmatrix}
    C_1^\tr \\ 
    C_2^\tr
    \end{bmatrix}^\tr
\end{align*}
\normalsize
where $\Sigma_{i} \succ 0 $ is a positive-definite diagonal matrix for each $i \in \{1,2\}$. If $\Sigma_1$ and $\Sigma_2$ share no eigenvalues in common, then the sub-blocks $A_{11}$ and $A_{22}$ are both asymptotically stable. Furthermore, we have the following error bound
\begin{equation} \label{eq:balanced-error}
\|\mathbf{P} - \mathbf{P}_r\|_{\mathcal{H}_\infty} \leq 2\mathrm{trace}(\Sigma_2), \; \text{where}\; \mathbf{P}_r \!=\! \left[\begin{array}{c |c}
        A_{11} &B_1 \\
        \hline
        C_1  & D
    \end{array}\right].
\end{equation}
\end{lemma}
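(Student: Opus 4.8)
The plan is to treat the two assertions separately: first the asymptotic stability of the sub-blocks $A_{11}$ and $A_{22}$, and then the $\mathcal{H}_\infty$ error bound. Both rest on extracting the diagonal sub-blocks of the two coupled Lyapunov (Gramian) equations in the statement. Writing $A = \begin{bmatrix} A_{11} & A_{12} \\ A_{21} & A_{22}\end{bmatrix}$, $\Sigma = \diag(\Sigma_1,\Sigma_2)$, $B = \begin{bmatrix}B_1\\B_2\end{bmatrix}$, and $C = \begin{bmatrix}C_1 & C_2\end{bmatrix}$, the $(1,1)$ blocks of the two equations read $A_{11}\Sigma_1 + \Sigma_1 A_{11}^\tr = -B_1 B_1^\tr$ and $\Sigma_1 A_{11} + A_{11}^\tr \Sigma_1 = -C_1^\tr C_1$, and symmetrically for $A_{22}$ with $\Sigma_2$. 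I would record these four relations at the outset, since everything downstream is read off from them.

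For the stability claim, since $\Sigma_1 \succ 0$ solves $A_{11}\Sigma_1 + \Sigma_1 A_{11}^\tr = -B_1 B_1^\tr \preceq 0$, standard Lyapunov theory already places every eigenvalue of $A_{11}$ in the closed left-half plane. The remaining, and genuinely delicate, step is to exclude eigenvalues on the imaginary axis. Here I would argue by contradiction: assume $A_{11} x = j\omega x$ for some $x \neq 0$, and use the two sub-block equations \emph{together} --- testing them against $x$ and exploiting $\Sigma_1 \succ 0$ to relate the left and right eigenvectors --- to show that the corresponding invariant subspace would force $\Sigma_1$ and $\Sigma_2$ to share a common eigenvalue, contradicting the hypothesis. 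This is exactly the Pernebo--Silverman mechanism: the coupling between the controllability and observability Gramian equations combined with the disjoint-spectra assumption is what upgrades marginal stability to strict stability. Applying the same reasoning to the $(2,2)$ blocks yields stability of $A_{22}$.

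For the error bound, I would first reduce to the single-step case where $\Sigma_2 = \sigma I$ corresponds to a single (possibly repeated) Hankel singular value, and show $\|\mathbf{P} - \mathbf{P}_r\|_{\mathcal{H}_\infty} \le 2\sigma$. I would form a state-space realization of the error system $\mathbf{E} = \mathbf{P} - \mathbf{P}_r$ and, using the Gramian relations above, verify that a suitably scaled $\mathbf{E}$ is essentially all-pass of gain $2\sigma$ --- i.e. $\sigma_{\max}(\mathbf{E}(j\omega)) \le 2\sigma$ uniformly in $\omega$ --- by exhibiting the Lyapunov certificate that makes the associated bounded-real / KYP inequality hold. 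With the single-step bound in hand I would telescope: peel off the blocks of $\Sigma_2$ one distinct value at a time, observe that each intermediate truncation is again a balanced realization to which the single-step bound applies, and sum the contributions via the triangle inequality to arrive at $\|\mathbf{P} - \mathbf{P}_r\|_{\mathcal{H}_\infty} \le 2\,\mathrm{trace}(\Sigma_2)$.

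The main obstacle is the single-step all-pass bound: constructing the error realization and certifying $\|\mathbf{E}\|_{\mathcal{H}_\infty} \le 2\sigma$ demands a careful Lyapunov/KYP computation rather than a soft estimate, and it is the technical core of Glover's theorem. The imaginary-axis exclusion in the stability part is the second delicate point, because the benign Lyapunov inequalities alone give only closed-left-half-plane spectrum and one must actively invoke the disjoint-spectra hypothesis to rule out marginal modes. By contrast, the telescoping step is routine once one checks that balancedness (and the no-shared-eigenvalue structure) is inherited by each truncated subsystem.
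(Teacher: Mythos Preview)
The paper does not supply its own proof of this lemma: it is quoted as a known result with citations to Pernebo--Silverman and Antoulas, and the surrounding text only remarks that a balanced realization exists (proved separately in the appendix). Your sketch is the standard argument from those references --- sub-block Lyapunov equations for closed-left-half-plane spectrum, the disjoint-spectra hypothesis to exclude imaginary-axis modes, and the single-step $2\sigma$ bound telescoped over distinct Hankel singular values --- so there is nothing to compare against and your outline is correct.
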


Note that \cref{lemma:truncated_distinct_evals_assumption} assumes that the transfer function $\mathbf{P}$ has a balanced state-space realization. It is well-known from classical control that any stable transfer function has a balanced minimal realization. For completeness, we state (and prove) this result in \cref{prop:balanced_realization} in \cref{subsec:balanced_realization}. In addition, in \Cref{algorithm:balanced_truncation}, we state the standard balanced truncation procedure for a minimal realization of a stable controller $\mathbf{K}$.

\begin{algorithm}[tp!]
\caption{Balanced truncation for stable systems}
\label{algorithm:balanced_truncation}
\begin{algorithmic}[1]
\REQUIRE{
1) A stable system $\trK$ with minimal state-space  realization $\trK = \left[\begin{array}{c|c}
    A & B \\
    \hline
    C & D
\end{array} \right]$, 2) post-truncation order parameter $r$
}
\STATE{Form the controllability and observability Gramians $W_c$ and $W_o$ via solving 
\begin{align*}
    {A} W_c + W_c {A}^\tr + {B} {B}^\tr = 0, \qquad
     {A}^\tr W_o + W_o {A} + {C}^\tr {C} = 0. 
\end{align*}}
\vspace{-4mm}
\STATE{Compute $\Sigma = \diag\left(\left\{\sqrt{\lambda_i(W_cW_o)}\right\}_{i=1}^n\right)$.}
\STATE{Factorize $W_c = QQ^\tr$, 
and compute an orthonormal $U$ such that $Q^\tr W_o Q = U\Sigma^2 U^\tr$.}
\STATE{Let $T = \Sigma^{1/2}U^\tr Q^{-1}$, and compute a balanced minimal realization $\trK = \left[\begin{array}{c|c}
    \tilde{A} & \tilde{B}  \\
    \hline \\[-10pt]
    \tilde{C} & \tilde{D} 
\end{array}\right],$ where $\tilde{A} =  TAT^{-1}, \tilde{B} = TB, \tilde{C} = CT^{-1}, \tilde{D} = D.$}
\STATE{Partition $\tilde{A}, \tilde{B}, \tilde{C}, \Sigma$ as 
 $  \tilde{A} = \begin{bmatrix}
    \tilde{A}_{11} & \tilde{A}_{12} \\
    \tilde{A}_{21} & \tilde{A}_{22}
    \end{bmatrix}, 
    \tilde{B} = \begin{bmatrix}
    \tilde{B}_{1} \\
    \tilde{B}_{2}
    \end{bmatrix},  
    \tilde{C} = \begin{bmatrix}
    \tilde{C}_{1} &
    \tilde{C}_{2}
    \end{bmatrix},
    \Sigma = \begin{bmatrix}
    \Sigma_1 & 0 \\
    0 & \Sigma_2
    \end{bmatrix},$
where $\tilde{A}_{11}, \Sigma_1 \in \bbR^{r \times r}, \tilde{B}_1 \in \bbR^{r \times p}, \tilde{C}_1 \in \bbR^{m \times r}$.}
\RETURN{the reduced order controller 
$\trK_r \coloneqq \left[\begin{array}{c|c}
    \tilde{A}_{11} & \tilde{B}_1 \\
    \hline \\[-10pt]
    \tilde{C}_{1} & \tilde{D}
\end{array} \right].$}
\end{algorithmic}
\end{algorithm}

\subsection{Controller reduction} \label{subsection:stable-unstable-separation}
In general, the dynamical controller $\trK$ is not stable itself, i.e., $A_\mK$ in~\cref{eq:Dynamic_Controller} has unstable eigenvalues. The standard balanced truncation procedure cannot be applied to unstable systems directly. Our strategy is to divide the controller $\trK$ into a stable part and unstable part
\begin{equation} \label{eq:controller-separation}
    \trK = \trK_< + \trK_\geq,
\end{equation}
\normalsize
where $\trK_<$ of order $n_1$ contains all stable poles (i.e., those on the open left-half plane) and $\trK_\geq$ of order $n_2$ contains the remaining poles (i.e., those on the closed right-half plane), and $n_1 + n_2 = n$. 
In this section, we assume the controller contains at least one stable pole ($n_1 \geq 1$). 

The separation \cref{eq:controller-separation} is always possible by computing the Jordan normal form of $A_\mK$ such that  
\begin{equation} \label{eq:Jordan-normal-form}
A_\mK = Q_\mK \hat{A}_\mK Q_\mK^{-1}, \qquad \text{with} \;\; \hat{A}_\mK = \begin{bmatrix}
\hat{A}_{\mK,<} & 0 \\
0 & \hat{A}_{\mK, \geq}
\end{bmatrix},
\end{equation}
\normalsize
where $Q_\mK \in \mathbb{R}^{n \times n}$ is an invertible coordinate transformation\footnote{Since any real-valued matrix can be expressed in a Jordan canonical form, such a transformation $Q_\mK$ always exists.},  the eigenvalues of $\hat{A}_{\mK,<} \in \bbR^{n_1 \times n_1}$ are in the open left-half plane, 
and the eigenvalues of $\hat{A}_{\mK,\geq} \in \bbR^{n_2 \times n_2}$ are in the closed right-half plane, and $n_1 + n_2 = n$. Therefore, the stable and unstable parts in \cref{eq:controller-separation} can be expressed as 
\begin{equation} \label{eq:controller-separation-state-space}
\trK_< =  \left[\begin{array}{c|c} \hat{A}_{\mK,<} & \hat{B}_{\mK,<} \\\hline
     \hat{C}_{\mK,<} & 0\end{array}\right], \qquad \trK_\geq = \left[\begin{array}{c|c} \hat{A}_{\mK,\geq} & \hat{B}_{\mK,\geq} \\\hline
     \hat{C}_{\mK,\geq} & 0\end{array}\right],
\end{equation}
\normalsize
where $\hat{C}_\mK \coloneqq C_\mK Q_\mK$ and $\hat{B}_\mK \coloneqq Q_\mK^{-1} B_\mK$ are partitioned into 
$$
\hat{C}_\mK = \begin{bmatrix}
\hat{C}_{\mK,<} & \hat{C}_{\mK,\geq} 
\end{bmatrix}, \qquad \hat{B}_\mK = \begin{bmatrix}
\hat{B}_{\mK,<} \\ \hat{B}_{\mK,\geq} 
\end{bmatrix}
$$
\normalsize
with $\hat{C}_{\mK,<} \in \bbR^{m \times n_1}, \hat{C}_{\mK,\geq} \in \bbR^{m \times (n-n_1)}$, and  $\hat{B}_{\mK,<} \in \bbR^{n_1 \times p}, \hat{B}_{\mK,\geq} \in \bbR^{(n - n_1) \times p}$. 

We can then perform a balanced truncation on the stable part $\trK_<$ and get a reduced-order controller 
$$
\trK_{<,{r}} =  \left[\begin{array}{c|c} \tilde{A}_{<,11} & \tilde{B}_{<,1} \\\hline
\\[-12pt]
     \tilde{C}_{<,1} & 0\end{array}\right],
$$
\normalsize
where the order is $n_r < n_1$. The final reduced-order controller becomes
\begin{equation}
    \trK_r = \trK_{<,r} + \trK_{\geq} = \left[\begin{array}{cc|c} \tilde{A}_{<,11} & 0 & \tilde{B}_{<,1} \\ 
    0 & \hat{A}_{\mK,\geq} & \hat{B}_{\mK,\geq}
    \\\hline
    \\[-12pt]
   \tilde{C}_{<,1} & \hat{C}_{\mK,\geq}& 0\end{array}\right]
\end{equation}
\normalsize
which has order $r:= n_r + n_2 < n$. This process is summarized in \Cref{algorithm:balanced_truncation_stable_unstable}. 

\begin{algorithm}[tp!]
\caption{Balanced truncation for an unstable controller with stable part}
\label{algorithm:balanced_truncation_stable_unstable}
\begin{algorithmic}[1]
\REQUIRE{1) A controller $\trK$ with a minimal order-$n$ state-space  realization  $\trK = \left[\begin{array}{c|c}
    A_\mK & B_\mK \\
    \hline
    C_\mK & 0
\end{array} \right]$, 2) the post-truncation order $r\geq n_2$ with $n_2$ defined in \cref{eq:controller-separation}.}
\STATE{Compute the Jordan normal form of $A_\mK$ in \cref{eq:Jordan-normal-form}.}
\STATE{Separate the controller $\trK$ into  $\trK = \trK_< + \trK_\geq$ as \cref{eq:controller-separation-state-space}.} 
\STATE{Perform balanced truncation on the stable and minimal system $\trK_< $ with post-truncation order parameter $n_r < n_1$ to obtain an order-$n_r$ system $\trK_{<,r}$.}
\RETURN{the reduced-order controller $\trK_r = \trK_{r,<} + \trK_{\geq}$ (of order $r=n_r + n_2 < n)$. }
\end{algorithmic}
\end{algorithm}

Based on \Cref{theorem:truncated_K_still_stable_conditions,theorem:stable_K_truncation_result},
under appropriate conditions, the resulting controller $\trK_r$ from \Cref{algorithm:balanced_truncation_stable_unstable} remains a stabilizing controller and has a similar LQG cost compared to the original controller $\trK$.


\vspace{1mm}

\begin{corollary}
\label{corollary:balanced_truncation_LQG_bound}
Consider a minimal $n$-th order controller $\mathbf{K}$ which stabilizes the plant $\trG$. 
Suppose we obtain an $r$-th order controller $\trK_r$ via \Cref{algorithm:balanced_truncation_stable_unstable}, where $r < n$, such that $\trK_r = \trK_{<,r} + \trK_{\geq}$, where $\trK_{<,r}$ is a lower-order balanced truncation of $\trK_<$. Suppose that $\Sigma_{<,1}$ and $\Sigma_{<,2}$ in the balanced truncation of $\trK_<$ share no eigenvalues. If
\begin{align}
\label{eq:K_Kr_pole_stability_condition_2}
    \sigma_{n_r+1} + \cdots +\sigma_{n_1}< \frac{1}{ 2\norm*{(I - \mathbf{G}\mathbf{K})^{-1} \mathbf{G}}_{\mathcal{H}_\infty}},
\end{align}
where $\sigma_{n_r+1},\dots,\sigma_{n_1}$ are the diagonal elements of $\Sigma_{<,2}$ in the balanced truncation of $\trK_{<}$, then the reduced-order controller $\trK_r$ internally stabilizes the plant \cref{eq:LTI}, and the resulting LQG cost  \cref{eq:LQG-cost}  satisfies 
    $$J(\trK_r) \leq \frac{1}{\left(1 - \norm*{\mathbf{X}}_{\mathcal{H}_{\infty}} \norm*{\mathbf{\Delta}}_{\mathcal{H}_{\infty}}\right)^2} (J(\mathbf{K}) + S_1 + S_2),$$
where with the notation $\mathbf{Y} := (I - \mathbf{G}\trK)^{-1}$, $\mathbf{X} := (I - \mathbf{G}\trK)^{-1} \mathbf{G}$ and $\mathbf{\Delta} := \mathbf{K}_r - \mathbf{K}$,
\begin{align*}
    &S_1 := 2\norm*{\mathbf{\Delta}}_{\mathcal{H}_\infty} \norm*{\mathbf{X}}_{\mathcal{H}_2}(\norm*{\mathbf{X} \mathbf{K}}_{\mathcal{H}_2})
    + 2\norm*{\mathbf{\Delta}}_{\mathcal{H}_2} (\norm*{\trK \mathbf{Y}}_{\mathcal{H}_2} \norm*{\mathbf{Y}}_{\mathcal{H}_\infty} + \norm*{\trK \mathbf{X}}_{\mathcal{H}_2} \norm*{\mathbf{X}}_{\mathcal{H}_\infty})(1 + \norm*{\trK \trX}_{\mathcal{H}_{
        \infty}} ), \\
    &S_2 := \norm*{\mathbf{\Delta}}_{\mathcal{H}_\infty}^2 \norm*{\mathbf{X}}_{\mathcal{H}_2}^2 + \norm*{\mathbf{\Delta}}_{\mathcal{H}_2}^2 (\norm*{\mathbf{Y}}_{\mathcal{H}_{\infty}}^2 + \norm*{\mathbf{X}}_{\mathcal{H}_{\infty}}^2) (1 + \norm*{\trK \trX}_{\mathcal{H}_{
        \infty}} )^2.
\end{align*}
\normalsize
\end{corollary}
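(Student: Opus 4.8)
The plan is to recognize this corollary as essentially a specialization of \Cref{theorem:stable_K_truncation_result}, where the role of the abstract truncation error $\trDelta$ is played by the balanced-truncation error on the stable part, and where \Cref{lemma:truncated_distinct_evals_assumption} supplies the quantitative $\mathcal{H}_\infty$ estimate needed to trigger that theorem. First I would observe that, because $\trK_r = \trK_{<,r} + \trK_\geq$ and $\trK = \trK_< + \trK_\geq$ share the identical, untouched unstable part $\trK_\geq$, the error telescopes to $\trDelta \coloneqq \trK_r - \trK = \trK_{<,r} - \trK_<$, involving only the stable component of the controller.

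Next I would establish stability of $\trDelta$, which is exactly what \Cref{theorem:stable_K_truncation_result} requires (it is needed already for $\norm*{\trDelta}_{\mathcal{H}_\infty}$ to be finite, and the proof of \Cref{theorem:truncated_K_still_stable_conditions} uses stability of $\trDelta$ to conclude that $\trDelta(I - \trG\trK)^{-1}$ is stable). Since the corollary assumes $\Sigma_{<,1}$ and $\Sigma_{<,2}$ share no common eigenvalues, \Cref{lemma:truncated_distinct_evals_assumption} guarantees that the truncated state matrix $\tilde{A}_{<,11}$ is asymptotically stable, so $\trK_{<,r}$ is stable; combined with stability of $\trK_<$ this gives that $\trDelta = \trK_{<,r} - \trK_<$ is stable. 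The same lemma then yields the error bound $\norm*{\trDelta}_{\mathcal{H}_\infty} = \norm*{\trK_< - \trK_{<,r}}_{\mathcal{H}_\infty} \leq 2\,\mathrm{trace}(\Sigma_{<,2}) = 2(\sigma_{n_r+1} + \cdots + \sigma_{n_1})$.

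The crux is then a one-line check that the hypothesis \cref{eq:K_Kr_pole_stability_condition_2} is precisely calibrated to invoke \Cref{theorem:stable_K_truncation_result}. Multiplying \cref{eq:K_Kr_pole_stability_condition_2} by $2$ and chaining with the error bound gives $\norm*{\trDelta}_{\mathcal{H}_\infty} \leq 2(\sigma_{n_r+1} + \cdots + \sigma_{n_1}) < 1/\norm*{(I - \trG\trK)^{-1}\trG}_{\mathcal{H}_\infty}$, which is exactly condition \cref{eq:K_Kr_pole_stability_condition_1}. Applying \Cref{theorem:stable_K_truncation_result} then delivers both the internal stability of $\trK_r$ and the stated LQG bound with the same $S_1, S_2$ verbatim, so no further computation is needed.

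Because the two results I may cite do all the heavy lifting, I do not anticipate a substantive obstacle; the only points needing care are the bookkeeping that the unstable parts cancel so that $\trDelta$ is the \emph{stable-part} difference (ensuring a finite $\mathcal{H}_\infty$ norm and the applicability of the balanced-truncation bound), and the matching of the two factors of $2$ — one from the error bound $2\,\mathrm{trace}(\Sigma_{<,2})$ and one embedded in \cref{eq:K_Kr_pole_stability_condition_2} — which is exactly what reproduces the threshold in \cref{eq:K_Kr_pole_stability_condition_1}. I would also note in passing that the minimality of $\trK_<$ (so that its balanced realization and hence \Cref{lemma:truncated_distinct_evals_assumption} apply) is guaranteed by the construction in \Cref{algorithm:balanced_truncation_stable_unstable}.
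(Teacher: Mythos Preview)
Your proposal is correct and follows essentially the same approach as the paper: reduce to \Cref{theorem:stable_K_truncation_result} by noting that $\trDelta = \trK_{<,r}-\trK_<$ is stable, invoke the balanced-truncation error bound \cref{eq:balanced-error} from \Cref{lemma:truncated_distinct_evals_assumption}, and observe that \cref{eq:K_Kr_pole_stability_condition_2} multiplied by $2$ yields \cref{eq:K_Kr_pole_stability_condition_1}. The paper's own proof is a terse two-sentence remark making exactly this reduction, so your more detailed bookkeeping (cancellation of $\trK_\geq$, stability of $\trDelta$, matching the factors of $2$) simply spells out what the paper leaves implicit.
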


The proof is based on the analysis in \Cref{theorem:stable_K_truncation_result}. The only difference is that \Cref{corollary:balanced_truncation_LQG_bound} truncates the singular values in the stable part $\trK_r$ and imposes the condition \cref{eq:K_Kr_pole_stability_condition_2}, which is the same \cref{eq:K_Kr_pole_stability_condition_1} in \Cref{theorem:stable_K_truncation_result} when applying \cref{eq:balanced-error} in \Cref{lemma:truncated_distinct_evals_assumption}. Note that \Cref{corollary:balanced_truncation_LQG_bound} is reduced to \Cref{theorem:stable_K_truncation_result} when the controller $\mathbf{K}$ is stable itself.

\section{Controller reduction via modal truncation} \label{section:order_reduction_modal}


In this section, we proceed to discuss controller reduction by modal truncation, which may apply to the truncation of either stable or unstable component(s) in a controller.  
In particular, we first apply modal truncation on the stable part of a controller in \cref{section:controller_reduction_modal_truncation_stable}, and then discuss the performance of modal truncation on possibly unstable component(s) for SISO systems in \cref{section:order_reduction_for_unstable_controller_SISO}. 

\subsection{Modal truncation on stable component(s)}
\label{section:controller_reduction_modal_truncation_stable}

\begin{algorithm}[tp!]
\caption{Modal truncation}
\label{algorithm:modal_truncation}
\begin{algorithmic}[1]
\REQUIRE{
1) A controller $\trK$ with a minimal order-$n$ state-space  realization  $\trK = \left[\begin{array}{c|c}
    A_\mK & B_\mK \\
    \hline
    C_\mK & 0
\end{array} \right]$; 2) the order~reduction parameter $r_{\mathrm{red}}$ (a positive integer less than $k$)}
\STATE{Convert $A_\mK$ into the standard Jordan normal form \eqref{eq:Jordan-form}. }
\STATE{Decompose $\trK$ as $\trK(s) = \sum_{i=1}^k C_i (sI-A_i)^{-1} B_i$ that is consistent with the Jordan block \cref{eq:Jordan-form}.} 
\STATE{For each $i$, compute the index $d_i$ in \eqref{eq:modes-importance}.}  
\STATE{Let $o_i$ be the ranking of $i$ according to $\{d_i\}_{i=1}^k$, such that $o_i = j$ if $d_i$ is the $j$-th smallest value. 
}
\STATE{Set $\trDelta = \sum_{i \in [k], o_i \leq r_{\mathrm{red}}} C_i (sI - A_i)^{-1} B_i$.}
\RETURN{the reduced order controller $\trK_r := \trK - \trDelta$.}
\end{algorithmic}
\end{algorithm}

The basic idea of modal truncation begins with writing the controller $\trK$ \cref{eq:Dynamic_Controller} into $\trK(s) = \sum_{i=1}^k C_i (sI-A_i)^{-1} B_i$, where $A_i$ contains a mode corresponding to an eigenvalue of $\lambda_i$ in $\trK$. This is always possible by considering its standard Jordan form 
\begin{equation} \label{eq:Jordan-form}
A_\mK =  \begin{bmatrix}
    A_1 & 0 & \dots & 0 \\
    0 & A_2 & \dots & 0 \\
    \vdots & \vdots & \ddots & \vdots \\
    0 & 0 & \dots & A_k
    \end{bmatrix},
\end{equation}
\normalsize
where each $A_i$ is a Jordan block of order $n_i$, and $\sum_{i=1}^k n_i = n.$ Let $\lambda_i$ denote the eigenvalue associated with each Jordan block $A_i$. 
We then directly remove some modes that are less significant according the criterion defined below 
\begin{equation} \label{eq:modes-importance}
     d_i =
\begin{cases}
\norm*{C_i (sI - A_i)^{-1} B_i}_{\mathcal{H}_\infty} & \text{if} \; \lambda_i < 0  \\
\norm*{C_i A_i^{-1} B_i}_2  & \text{if} \; \lambda_i > 0.
\end{cases} 
\end{equation}
The detailed steps are listed in \Cref{algorithm:modal_truncation}.     

As a counterpart to balanced truncation, we can derive upper bounds the LQG cost change when performing modal truncation on the stable part of a controller $\trK$. 

\vspace{1mm}
\begin{corollary}
\label{corollary:modal_truncation_stable_unstable}
Consider a minimal $n$-th order controller $\mathbf{K}$ which stabilizes the plant $\trG$. 
Consider the decomposition $\trK = \trK_< + \trK_\geq$ in \cref{eq:controller-separation}, and suppose we obtain a lower-order approximation $\trK_{r,<}$ of $\trK_<$ using the modal truncation algorithm in \Cref{algorithm:modal_truncation}. Let $\trK_r := \trK_{r,<} + \trK_{\geq}$. 
Denote $\mathbf{\Delta} = \trK_< - \trK_{r,<}$. Suppose that
\vspace{-1mm}
\begin{align}
\label{eq:K_Kr_pole_stability_condition_stable}
 \norm*{\trDelta}_{\calH_{\infty}} < \frac{1}{\norm*{(I - \mathbf{G}\mathbf{K})^{-1} \mathbf{G} }_{\mathcal{H}_{\infty}}}.
\end{align}
\normalsize
Then, we have that $\trK_r$ internally stabilizes $\trG$, and the resulting LQG cost \cref{eq:LQG-cost} satisfies
\vspace{-1mm}
$$J(\trK_r) \leq \frac{1}{\left(1 - \norm*{\mathbf{X}}_{\mathcal{H}_{\infty}} \norm*{\mathbf{\Delta}}_{\mathcal{H}_{\infty}}\right)^2} (J(\mathbf{K}) + S_1 + S_2),$$
\normalsize
where with the notation $\mathbf{Y} := (I - \mathbf{G}\trK)^{-1}$, $\mathbf{X} := (I - \mathbf{G}\trK)^{-1} \mathbf{G}$, we have
\begin{align*}
     &S_1 := 2\norm*{\mathbf{\Delta}}_{\mathcal{H}_\infty} \norm*{\mathbf{X}}_{\mathcal{H}_2}(\norm*{\mathbf{X} \mathbf{K}}_{\mathcal{H}_2}) 
    + \norm*{\mathbf{\Delta}}_{\mathcal{H}_2} (\norm*{\trK \mathbf{Y}}_{\mathcal{H}_2} \norm*{\mathbf{Y}}_{\mathcal{H}_\infty} + \norm*{\trK \mathbf{X}}_{\mathcal{H}_2} \norm*{\mathbf{X}}_{\mathcal{H}_\infty})\left(1 + \norm*{\trK \trX}_{\mathcal{H}_{
        \infty}} \right), \\
    &S_2 := \norm*{\mathbf{\Delta}}_{\mathcal{H}_\infty}^2 \norm*{\mathbf{X}}_{\mathcal{H}_2}^2 + \norm*{\mathbf{\Delta}}_{\mathcal{H}_2}^2 (\norm*{\mathbf{Y}}_{\mathcal{H}_{\infty}}^2 + \norm*{\mathbf{X}}_{\mathcal{H}_{\infty}}^2) \left(1 + \norm*{\trK \trX}_{\mathcal{H}_{
        \infty}} \right)^2.
\end{align*}
\normalsize
\end{corollary}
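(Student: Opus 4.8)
The plan is to reduce \Cref{corollary:modal_truncation_stable_unstable} entirely to \Cref{theorem:stable_K_truncation_result}, since the two statements have identical conclusions once we identify the perturbation $\trDelta = \trK - \trK_r$. First I would observe that by construction $\trK_r = \trK_{r,<} + \trK_\geq$ and $\trK = \trK_< + \trK_\geq$, so the global perturbation satisfies $\trK_r - \trK = \trK_{r,<} - \trK_< = -\trDelta$, i.e.\ the modal truncation only alters the stable component and leaves $\trK_\geq$ untouched. The crucial point is that $\trDelta = \trK_< - \trK_{r,<}$ is \emph{stable}: it is a sum of modal terms $C_i(sI - A_i)^{-1}B_i$ all drawn from the stable part $\trK_<$, hence all its poles $\lambda_i$ lie in the open left-half plane. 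This stability of $\trDelta$ is exactly what is needed to invoke \Cref{theorem:stable_K_truncation_result}, whose only hypotheses are that $\trK$ stabilizes $\trG$, that $\trDelta$ is stable, and that the norm bound \cref{eq:K_Kr_pole_stability_condition_1} holds.

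Next I would verify that the hypothesis \cref{eq:K_Kr_pole_stability_condition_stable} assumed here is literally the bound \cref{eq:K_Kr_pole_stability_condition_1} required by \Cref{theorem:stable_K_truncation_result}, namely
\[
\norm*{\trDelta}_{\calH_\infty} < \frac{1}{\norm*{(I - \trG\trK)^{-1}\trG}_{\calH_\infty}}.
\]
Since $\trDelta$ is stable, its $\calH_\infty$ norm is well defined (finite), so this is a meaningful condition. With $\trDelta$ stable and this norm bound in force, \Cref{theorem:stable_K_truncation_result} applies verbatim and yields both the internal stability of $\trK_r$ and the LQG cost bound \cref{eq:LQG-cost-bound} with precisely the quantities $S_1, S_2$ stated in the corollary. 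I would then simply note that the expressions for $S_1$ and $S_2$, as well as the prefactor $\bigl(1 - \norm*{\trX}_{\calH_\infty}\norm*{\trDelta}_{\calH_\infty}\bigr)^{-2}$, are identical to those in \Cref{theorem:stable_K_truncation_result}, so no recomputation is needed.

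The remaining obligation, and the only place where anything beyond a direct citation is required, is to justify the two structural claims about $\trDelta$: that it is stable and that $\trK_r - \trK = -\trDelta$. The first follows from \Cref{algorithm:modal_truncation} applied to $\trK_<$, which selects modes $C_i(sI-A_i)^{-1}B_i$ with $\lambda_i < 0$ (those indexed by the $d_i$ defined via the $\calH_\infty$ criterion in \cref{eq:modes-importance}), so every retained or removed mode of the stable part has a left-half-plane pole; consequently both $\trK_{r,<}$ and $\trDelta = \trK_< - \trK_{r,<}$ are stable. The second follows from the additive decomposition being modal (block-diagonal in the Jordan form \cref{eq:Jordan-form}), so that removing modes from $\trK_<$ affects the transfer function only through the stable summands and leaves $\trK_\geq$ exactly fixed.

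I do not anticipate a genuine obstacle here, because the corollary is essentially a specialization of \Cref{theorem:stable_K_truncation_result} to the particular perturbation produced by \Cref{algorithm:modal_truncation}. The one subtlety worth stating explicitly is the stability of $\trDelta$: whereas balanced truncation in \Cref{corollary:balanced_truncation_LQG_bound} guarantees stability of the truncated stable block via \Cref{lemma:truncated_distinct_evals_assumption}, here stability of $\trDelta$ comes for free from the fact that modal truncation partitions the poles of $\trK_<$ and every such pole is already in the open left-half plane. Once this is noted, the corollary is immediate.
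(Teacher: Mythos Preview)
Your proposal is correct and follows essentially the same approach as the paper: the paper's proof simply observes that \cref{eq:K_Kr_pole_stability_condition_stable} yields the condition in \Cref{theorem:truncated_K_still_stable_conditions} (hence internal stability of $\trK_r$) and then invokes the calculations of \Cref{theorem:stable_K_truncation_result} for the LQG bound. You are in fact slightly more explicit than the paper in spelling out why $\trDelta$ is stable and why the sign flip $\trK_r-\trK=-\trDelta$ is immaterial, which is helpful but does not change the argument.
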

\vspace{-3mm}
\begin{proof}
Since \cref{eq:K_Kr_pole_stability_condition_stable} holds, the condition in \cref{eq:truncated_K_still_stable_conditions} holds, and we can thus apply \cref{theorem:truncated_K_still_stable_conditions} to conclude that $\trK_r$ internally stabilizes $\trG$. Then, the same calculations in \cref{theorem:stable_K_truncation_result} can be used to show the upper bound on $J(\trK_r)$.
\end{proof}

\subsection{Modal truncation on unstable component(s)}
\label{section:order_reduction_for_unstable_controller_SISO}

Next, we introduce the following result, which studies the LQG cost change when truncating unstable mode(s) of a controller $\trK$, for single-input single-output (SISO) systems.


\begin{theorem}[Order reduction of unstable SISO controllers]
\label{theorem:modal_truncation_SISO}
Consider a minimal $n$-th order controller $\mathbf{K}$ which stabilizes the plant $\trG$. 
Suppose both $\trG(s)$ and $\trK(s)$ are univariate rational polynomial functions (SISO systems). Let an $r$-order controller $\trK_r$ computed via \Cref{algorithm:modal_truncation}, where $r < n$, where we denote $\mathbf{\Delta} = \trK - \trK_{r}$ as in \Cref{algorithm:modal_truncation} (note that $\trDelta$ may be unstable). Suppose $\trDelta$ has no unstable mode at 0, and 
\begin{align}
\label{eq:K_Kr_pole_stability_condition_unstable}
(1 - (1 - \mathbf{G}\mathbf{K})^{-1} \mathbf{G} \trDelta)^{-1} \in \mathcal{RH}_\infty,
\end{align}
\normalsize
Then, $\trK_r$ internally stabilizes $\trG$, and 
\begin{equation} \label{eq:LQG-unstable-SISO}
J(\trK_r) \leq \norm*{(1 - \trX \trDelta)^{-1}}_{\mathcal{H}_{\infty}}^2 (J(\mathbf{K}) + S_1 + S_2),
\end{equation}
\normalsize
where with the notation $\mathbf{Y} := (I - \mathbf{G}\trK)^{-1}$, $\mathbf{X} := (I - \mathbf{G}\trK)^{-1} \mathbf{G}$, we have
\begin{align*}
    &S_1\! := \!2\norm*{\mathbf{\Delta}}_{\mathcal{L}_\infty} \norm*{\mathbf{X}}_{\mathcal{H}_2}\norm*{\mathbf{X} \mathbf{K}}_{\mathcal{H}_2} +2\!\norm*{\trK \mathbf{Y}}_{\mathcal{H}_2}\! \left(\norm*{\mathbf{\Delta}}_{\mathcal{L}_2}\norm*{\mathbf{Y}}_{\mathcal{H}_\infty}\right) \\
    &\quad  \quad  
    +2\norm*{\trK \mathbf{Y}}_{\mathcal{H}_2}\! \left(\!\norm*{\mathbf{\Delta}}_{\mathcal{L}_\infty}\!\left(\norm*{\mathbf{Y}}_{\mathcal{H}_2}\! +\!\norm*{\trK \mathbf{X}}_{\mathcal{H}_2}\norm*{\mathbf{Y}}_{\mathcal{H}_\infty}\! \!+\! \norm*{\trK \mathbf{X}}_{\mathcal{H}_\infty}\norm*{\mathbf{X}}_{\mathcal{H}_2} \!\right)\!\right) \\
&S_2 \!:= \!\norm*{\mathbf{\Delta}}_{\mathcal{L}_\infty}^2 \norm*{\mathbf{X}}_{\mathcal{H}_2}^2 
+ \left(\norm*{\mathbf{Y}}_{\mathcal{H}_\infty} \left(\norm*{\trDelta}_{\mathcal{L}_\infty} \norm*{\trK \mathbf{X}}_{\mathcal{H}_2}  +  \norm*{\trDelta}_{\mathcal{L}_2}\right) \right)^2 
    + \norm*{\mathbf{X}}_{\mathcal{H}_2}^2\left(\norm*{\trDelta}_{\mathcal{L}_\infty} \norm*{\trK \mathbf{X}}_{\mathcal{H}_\infty}  +  \norm*{\trDelta}_{\mathcal{L}_\infty}\right)^2.
\end{align*}
\normalsize
\end{theorem}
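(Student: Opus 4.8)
The plan is to mirror the proofs of \cref{theorem:truncated_K_still_stable_conditions} and \cref{theorem:stable_K_truncation_result}, reusing verbatim the algebraic identities derived there for the four closed-loop subblocks associated with $\trK_r$. Those identities are built only from $\trY = \trX\trK + I$ and the push-through identity, so they hold whether or not $\trDelta$ is stable. The essential new ingredient is that in the SISO setting every transfer function is a scalar and hence all factors commute, so that $(I - \trX\trDelta)^{-1} = (I - \trDelta\trX)^{-1}$, and the hypothesis $(1 - \trX\trDelta)^{-1} \in \mathcal{RH}_\infty$ substitutes for the small-gain argument that previously certified stability of this inverse (which is no longer available, since $\trDelta$ may now be unstable so $\norm*{\trX}_{\calH_\infty}\norm*{\trDelta}_{\calH_\infty}$ is not even finite).

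For internal stability I would invoke \cref{lemma:stability-frequency} and show that each of the four subblocks in \eqref{eq:H-G-Kr} lies in $\mathcal{RH}_\infty$. Writing $\trK_r = \trK + \trDelta$ and substituting $(I - \trG\trK_r)^{-1} = (I - \trX\trDelta)^{-1}\trY$, the expansions \cref{eq:I-GK_r_inv_G_expression,eq:I-GK_r_inv_G_Kr_expression,eq:Kr(I-GKr)_inv_expression} apply directly. The crux is that, although $\trDelta$ itself may be unstable, every occurrence of $\trDelta$ in these expressions is paired with $(1 - \trX\trDelta)^{-1}$. Two observations then close the argument: first, the scalar identity $\trX\trDelta(1 - \trX\trDelta)^{-1} = (1 - \trX\trDelta)^{-1} - 1$ exhibits this product as stable by hypothesis; second, since the numerator of $(1 - \trX\trDelta)^{-1}$ carries the denominator of $\trDelta$, the product $\trDelta(1 - \trX\trDelta)^{-1}$ has the unstable poles of $\trDelta$ cancelled and is therefore stable. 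Because $\trX$, $\trX\trK$, $\trK\trX$ and $\trK\trY$ are all stable (as $\trK$ stabilizes $\trG$), combining these with the two stable building blocks above, and using commutativity to move $(1-\trX\trDelta)^{-1}$ freely, certifies that all four subblocks are in $\mathcal{RH}_\infty$.

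For the LQG bound I would apply \cref{lemma:LQG-cost} to write $J(\trK_r)$ as the sum of the squared $\mathcal{H}_2$ norms of the four subblocks, and estimate each starting from the same expansions. Since each subblock is stable, its $\mathcal{H}_2$ norm equals its $\mathcal{L}_2$ norm, so I may freely use the submultiplicative $\mathcal{L}$-norm inequalities in \cref{appendix:background_norm_bounds}. The difference from \cref{theorem:stable_K_truncation_result} is that, as $\trDelta$ need not be stable, every factor of $\trDelta$ must be measured in $\norm*{\cdot}_{\mathcal{L}_\infty}$ or $\norm*{\cdot}_{\mathcal{L}_2}$ (finite because the ``no unstable mode at $0$'' hypothesis, together with strict properness of the truncated modes, leaves $\trDelta$ with no imaginary-axis poles), while the stable factors keep their $\mathcal{H}_\infty$/$\mathcal{H}_2$ norms; moreover the prefactor $\norm*{(1-\trX\trDelta)^{-1}}_{\calH_\infty}$ cannot be simplified by small gain and is kept explicitly. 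Since this prefactor is at least $1$ (the integrand tends to $1$ at high frequency for strictly proper $\trX,\trDelta$), it can also be factored out of the genuinely stable $\trK\trY$ and $\trK\trX$ contributions. Expanding the squares and collecting terms produces $J(\trK) + S_1 + S_2$ inside the common factor $\norm*{(1-\trX\trDelta)^{-1}}_{\calH_\infty}^2$, matching \eqref{eq:LQG-unstable-SISO}.

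The main obstacle is the stability step under an unstable $\trDelta$: one cannot appeal to \cref{theorem:truncated_K_still_stable_conditions} directly, since there the requirement ``$\trDelta(I-\trG\trK)^{-1}$ stable'' may fail because the unstable poles of $\trDelta$ need not be cancelled by $\trY$. The resolution hinges specifically on the SISO structure---scalar commutativity together with the pole-zero cancellation hidden inside $(1 - \trX\trDelta)^{-1}$---which is precisely why the theorem is confined to SISO systems and does not extend verbatim to MIMO plants. A secondary technical point requiring care is the bookkeeping of $\mathcal{L}$- versus $\mathcal{H}$-norms, ensuring that no submultiplicative inequality is ever applied to an unstable factor through a norm that could be infinite.
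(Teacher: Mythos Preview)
Your plan for the LQG bound is essentially the paper's own, and your first ``stable building block'' $\trX\trDelta(1-\trX\trDelta)^{-1}=(1-\trX\trDelta)^{-1}-1$ is correct and in fact slightly slicker than the paper's separate verification that $\trX\trDelta\in\mathcal{RH}_\infty$. The gap is in your second observation, which is the part that controls the subblock $\trK_r(1-\trG\trK_r)^{-1}$.

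The claim that ``the numerator of $(1-\trX\trDelta)^{-1}$ carries the denominator of $\trDelta$, so $\trDelta(1-\trX\trDelta)^{-1}$ is stable'' is false. Writing $\trX=p/q$ (stable) and $\trDelta=a/b$, the \emph{unreduced} fraction $(1-\trX\trDelta)^{-1}=qb/(qb-pa)$ does display $b$ in the numerator, but then $\trDelta(1-\trX\trDelta)^{-1}=aq/(qb-pa)$, and the stability hypothesis only says that the \emph{reduced} denominator of $(1-\trX\trDelta)^{-1}$ has open-LHP roots; the unreduced polynomial $qb-pa$ may still vanish in the RHP. Concretely, take $\trDelta=1/(s-1)$ and $\trX=c(s-1)/(s+1)^2$ with small $c>0$: then $(1-\trX\trDelta)^{-1}=(s+1)^2/((s+1)^2-c)\in\mathcal{RH}_\infty$, yet $\trDelta(1-\trX\trDelta)^{-1}=(s+1)^2/\big[(s-1)((s+1)^2-c)\big]$ has a pole at $s=1$. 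Exactly this situation arises in the theorem, because $\trK$ has a pole at $\lambda_k$, so $\trY=(1-\trG\trK)^{-1}$---and hence $\trX=\trY\trG$---has a zero at $\lambda_k$, forcing a cancellation between $p$ and $b$ that destroys the argument.

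What the paper actually does is show directly that $\trDelta\trY$ and $\trX\trDelta$ lie in $\mathcal{RH}_\infty$ by an explicit evaluation at $s=\lambda_k$. This uses two structural facts you never invoke: (i) $\trDelta$ is a Jordan mode of the \emph{minimal} controller $\trK$, so the denominator of $\trK$ genuinely contains $(s-\lambda_k)^{n_k}$; and (ii) since $\trK$ internally stabilizes $\trG$, there is no unstable pole--zero cancellation, hence the numerator of $\trG$ is coprime to $(s-\lambda_k)$. These two facts together force the $(s-\lambda_k)^{n_k}$ factor to cancel when one writes out $\trDelta\trY=\trDelta/(1-\trG\trK)$ as a ratio of polynomials, and that is the missing ingredient. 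Your argument treats $\trDelta$ as a generic unstable perturbation with $(1-\trX\trDelta)^{-1}\in\mathcal{RH}_\infty$; this is not enough, and the theorem would be false at that level of generality.
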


\begin{proof}
Without loss of generality, we suppose $\trDelta(s) = C_k (sI - A_k)^{-1} B_k$ and assume that $\lambda_k > 0$. From the proof of \cref{theorem:truncated_K_still_stable_conditions}, to show that $\trK_r$ internally stabilize $\trG$, it suffices for us to show that
\begin{align*}
\begin{bmatrix}
        (I - \mathbf{G}\mathbf{K}_r)^{-1} \mathbf{G} & (I - \mathbf{G}\mathbf{K}_r)^{-1} \mathbf{G} \mathbf{K}_r \\
    \mathbf{K}_r(I - \mathbf{G}\mathbf{K}_r)^{-1} \mathbf{G} &  \mathbf{K}_r (I - \mathbf{G}\mathbf{K}_r)^{-1}
    \end{bmatrix} \in \mathcal{RH}_\infty. 
\end{align*}
From \cref{eq:I-GK_r_inv_G_expression} in the proof of \cref{theorem:truncated_K_still_stable_conditions}, since $\trK$ stabilizes $\trG$, the stability of $(I - \mathbf{G}\mathbf{K}_r)^{-1} \mathbf{G}$ is ensured when $(I - \trX \trDelta)^{-1}$ is stable. 

Meanwhile, from \cref{eq:I-GK_r_inv_G_Kr_expression} in the proof of \cref{theorem:truncated_K_still_stable_conditions}, assuming $\trK$ stabilizes $\trG$, the stability of $(I - \mathbf{G}\mathbf{K}_r)^{-1} \mathbf{G} \trK_r$ is achieved when $(I - \trX \trDelta)^{-1}$ and $\trX \trDelta$ are stable. 

Next, to show that $\mathbf{K}_r (I - \mathbf{G}\mathbf{K}_r)^{-1}$ is stable, from \cref{eq:Kr(I-GKr)_inv_expression} in the proof of \cref{theorem:truncated_K_still_stable_conditions}, assuming $\trK$ stabilizes $\trG$, it suffices for us to show that $(I - \trDelta \trX )^{-1}$ and $\trDelta \trY$ are stable. Observe that in the SISO case,  $(I - \trX \trDelta )^{-1} = (I - \trDelta \trX )^{-1}$.

Note that in the SISO case, since $(1 - \mathbf{G}\mathbf{K}_r)^{-1} \mathbf{G} \mathbf{K}_r = \mathbf{K}_r(1 - \mathbf{G}\mathbf{K}_r)^{-1} \mathbf{G}$, we do not have to separately show the stability of $\mathbf{K}_r(1 - \mathbf{G}\mathbf{K}_r)^{-1} \mathbf{G}$. 

Thus, collecting the conditions above, given that $\trK$ internally stabilizes $\trG$, to show that $\trK_r$ internally stabilizes $\trG$, recalling that $\trX := (1 - \mathbf{G}\mathbf{K})^{-1} \trG$ and $\trY :=(1 - \mathbf{G}\mathbf{K})^{-1} $, it suffices for us to show that 
\begin{align*}
    (1 - (1 - \mathbf{G}\mathbf{K})^{-1} \trG \trDelta)^{-1} \in \mathcal{RH}_\infty, \quad (1 - \mathbf{G}\mathbf{K})^{-1} \trG \trDelta \in \mathcal{RH}_\infty, \quad \trDelta(1 - \trG \trK)^{-1} \in \mathcal{RH}_\infty.
\end{align*}
Our assumption in \cref{eq:K_Kr_pole_stability_condition_unstable} assumes that $(1 - (1 - \mathbf{G}\mathbf{K})^{-1} \trG \trDelta)^{-1} \in \mathcal{RH}_\infty$. We proceed to show now that (with the aid of the assumption that $(1 - (1 - \mathbf{G}\mathbf{K})^{-1} \trG \trDelta)^{-1} \in \mathcal{RH}_\infty$) 
\begin{align*}
(1 - \mathbf{G}\mathbf{K})^{-1} \trG \trDelta \in \mathcal{RH}_\infty, \quad \trDelta(1 - \trG \trK)^{-1} \in \mathcal{RH}_\infty.
\end{align*}




We first show that  $(1 - \mathbf{G}\mathbf{K})^{-1} \trG \trDelta \in \mathcal{RH}_\infty$. Since $(1 - \trG \trK)^{-1} \trG $ is stable, the only possible pole for $(1 - \trG \trK)^{-1} \trG \trDelta$ is at $s = \lambda_k$, where $\lambda_k$ is the eigenvalue associated with $A_k$. 
Thus, it suffices to show that $\abs*{(1 - \trG \trK)^{-1} \trG \trDelta(\lambda_k)} < \infty$.  
Observe that for any $i \in [k]$, the transfer function $C_i(sI - A_i)^{-1}B_i$ can be written as a rational function in the form
\begin{align*}
    C_i(sI - A_i)^{-1}B_i = \frac{\alpha_i(s)}{(s - \lambda_i)^{r_i}},
\end{align*}
where $r_i = n_i$ (this is due to the minimality of the state-space realization $(A_{\mK}, B_{\mK},C_{\mK},0)$),
and $\alpha_i(s)$ is coprime to $(s - \lambda_i)$. Then, we can express $\trK(s)$ as 
\begin{align*}
    \trK(s) = \sum_{i=1}^k C_i(sI - A_i)^{-1}B_i 
    = \sum_{i=1}^k \frac{\alpha_i(s)}{(s - \lambda_i)^{n_i}} 
    = \frac{\alpha(s)}{\prod_{i=1}^k (s - \lambda_i)^{n_i}},
\end{align*}
where $\alpha(s) = \sum_{i=1}^k \left(\alpha_i(s) \prod_{j \neq i} (s - \lambda_j)^{n_j}\right)$. 
Since this $(A_{\mK}, B_{\mK},C_{\mK},0)$ state-space realization of $\trK$ is minimal, $\alpha(s)$ is coprime to $(s - \lambda_i)$ for each $i \in [k]$. 


Note that it is possible that $\lambda_i = \lambda_k$ for $i \neq k$. Hence, by extracting the factors of $(s - \lambda_k)$, we may decompose
$$\prod_{i=1}^{k-1} (s - \lambda_i)^{n_i} = (s - \lambda_k)^{n_k'} \prod_{i=1, \lambda_i \neq \lambda_k}^{k-1} (s - \lambda_i)^{n_i},$$
where $n_k' = \sum_{i=1, \lambda_i = \lambda_k}^{k-1} n_i$. Then, we have
\begin{align}
    \frac{\trG(s) \trDelta(s)}{1 - \trG(s) \trK(s)} &= \frac{\trG(s) \frac{\alpha_k(s)}{(s- \lambda_k)^{n_k}}}{1 - \trG(s) \frac{\alpha(s)}{\prod_{i=1}^k (s - \lambda_i)^{n_i}}} \nonumber \\
    &= \frac{\alpha_k(s) (s - \lambda_k)^{n_k'}}{\frac{(s - \lambda_k)^{n_k + n_k'}}{\trG(s)} + \frac{\alpha(s)}{\prod_{i=1, \lambda_i \neq \lambda_k}^{k-1} (s - \lambda_i)^{n_i}}}. \label{eq:gd_times_1-gk_inv_calculation_intermediate}
\end{align}
Now, since $\trK$ internally stabilizes $\trG$, there is no unstable pole-zero cancellation between $\trG$ and $\trK$ \cite[Theorem 5.7]{zhou1996robust}. Thus, if we express $\trG(s)$ in rational polynomial form as $\trG(s) = \frac{g_n(s)}{g_d(s)}$, then $g_n(s)$ is coprime to $(s - \lambda_k)$ (recall that $\lambda_k$ is in the open right-half plane, by our assumption at the start of the proof). Thus, the polynomial
$$\frac{(s - \lambda_k)^{n_k + n_k'}}{\trG(s)}$$
evaluated at $s = \lambda_k$ is zero. Hence, continuing from \cref{eq:gd_times_1-gk_inv_calculation_intermediate}, we have
\begin{align*}
    \frac{\trG(\lambda_k) \trDelta(\lambda_k)}{1 - \trG(\lambda_k) \trK(\lambda_k)} 
    &= \frac{\alpha_k(\lambda_k) (\lambda_k - \lambda_k)^{n_k'}}{\frac{(\lambda_k - \lambda_k)^{n_k + n_k'}}{\trG(\lambda_k)} + \frac{\alpha(\lambda_k)}{\prod_{i=1, \lambda_i \neq \lambda_k}^{k-1} (\lambda_k - \lambda_i)^{n_i}}} \\
    &= \frac{\alpha_k(\lambda_k) (\lambda_k - \lambda_k)^{n_k'}}{\frac{\alpha(\lambda_k)}{\prod_{i=1, \lambda_i \neq \lambda_k}^{k-1} (\lambda_k - \lambda_i)^{n_i}}},
\end{align*}
which is finite, since $\alpha_k(s)$ and $\alpha(s)$ are both coprime to $(s - \lambda_k)$ and hence their pointwise evaluation at $s = \lambda_k$ is finite. Thus, we conclude that $\frac{\trG \trDelta}{1 - \trG \trK}$ does not have a pole at $\lambda_k$. 
This implies that it has a finite $\mathcal{H}_\infty$ norm. 

Next we proceed to show that 
$\trDelta (1 - \trG \trK)^{-1}\in \mathcal{RH}_\infty$. The argument for this is similar to the one above. Recall that we can write 
$$\trG(s) = \frac{g_n(s)}{g_d(s)},$$ where $g_n(s)$ is coprime to $(s - \lambda_k)$ and $g_n(s)$ and $g_d(s)$ are coprime. In particular, we can also decompose $g_d(s)$ as $(s - \lambda_k)^{r_g} g_d'(s)$, where $r_g$ is a nonnegative integer. Then, 
\begin{align*}
    \frac{\trDelta(s)}{1  - \trG(s) \trK(s)} = \frac{\frac{\alpha_k(s)}{(s - \lambda_k)^{n_k}}}{1 - \frac{g_n(s)}{g_d(s)} \frac{\alpha(s)}{\prod_{i=1}^k (s - \lambda_i)^{n_i}}} 
    = \frac{\alpha_k(s) (s - \lambda_k)^{n_k' + r_g}}{(s - \lambda_k)^{n_k + n_k' + r_g} + \frac{g_n(s)}{g_d'(s)}\frac{\alpha(s)}{\prod_{i=1, \lambda_i \neq \lambda_k}^{k-1} (s - \lambda_i)^{n_i}}} 
\end{align*}

From the above expression, since $g_n(s), g_d'(s)$ and $\alpha(s)$ are all coprime to $(s - \lambda_k)$, it is not hard to see that $\frac{\trDelta(\lambda_k)}{1 - \trG(\lambda_k) \trK(\lambda_k)}$ is finite (and must in fact be zero if $n_k' + r_g > 0$). Recall that since $\frac{1}{1 - \trG \trK}$ is stable (as $\trK$ internally stabilizes $\trG$), the only possible pole for $\frac{\trDelta}{1 - \trG \trK}$ is at a pole of $\trDelta$, which is at $s = \lambda_k$. Since we have just excluded this possibility, it follows that $\frac{\trDelta}{1 - \trG \trK}$ is stable.

The proof for the bound on the change in LQG performance \cref{eq:LQG-unstable-SISO} is similar to that in \Cref{theorem:stable_K_truncation_result}. We provide the details in \Cref{appendix:proof-LQG-bound}.  
\end{proof}

\begin{remark}
We note a limitation of our result, namely the requirement that the truncated Jordan blocks have non-zero eigenvalues. Hence, the procedure does not work when we wish to truncate a Jordan block corresponding to an zero eigenvalue. In addition, due to the relative simplicity of defining zeros for SISO systems, we chose to limit our attention to SISO systems. Extending to general MIMO remains future work. 
\end{remark}

\section{Connecting near pole-zero cancellation to small Jordan block}
\label{section:near-pole-zero cancellation_to_jordan block}

An intuitive way of defining ``near non-minimality'' for a transfer function $\trG(s)$ is the existence of a pair of pole $p_i$ and zero $q_i$ which are ``close'' to each other. Assuming that $p_i$ is a simple pole, when this happens, we then conjecture that the coefficient corresponding to the term $\frac{1}{s-p_i}$ in the partial fraction decomposition of $\trG(s)$ is small, i.e. the Jordan block corresponding to the pole $p_i$ is small.

Below, we formalize this idea in the case $p_i$ is a simple pole\footnote{We believe a similar result holds for the general case when $p_i$ is a repeated pole, and leave the precise characterization to future work.}.

\begin{lemma}
Consider a minimal transfer SISO transfer function $\trG(s)$. Suppose $p_i$ is a simple pole of $G(s)$. Suppose we have the factorization
\begin{align*}
    \trG(s) = \frac{s-q_i}{s-p_i}\frac{n(s)}{d(s)},
\end{align*}
where $n(s)$ is coprime to $d(s)$ and $s-p_i$, and $d(s)$ is also coprime to $s-p_i$. By Bezout's identity, we know there exists $a(s), b(s)$ such that $a(s) d(s) + b(s) (s-p_i) = 1$, where $a(s)$ is a constant, and $b(s)$ has degree less than $d(s)$.

Observe that we can write $n(s) a(s) = f(s) (s-p_i) + r$ for some complex polynomial $f(s)$ and remainder term $r \in \mathbb{C}$. 
Then, we can write $G(s)$ as the partial fraction sum
$$\trG(s) = \frac{(p_i - q_i) r}{s-p_i} + \frac{e(s)}{d(s)},$$
where $e(s) = n(s)(s-q_i) b(s) + n(s)a(s) d(s) + (p_i-q_i)f(s) d(s)$.

As a consequence, we note that if $p_i - q_i$ is small (relative to the remainder term $r$), then the Jordan block corresponding to the $p_i$ pole also has a small coefficient.
\end{lemma}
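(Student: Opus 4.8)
The plan is to treat the asserted partial fraction formula as a polynomial identity and verify it by direct algebraic manipulation, using the Bezout relation and the division relation supplied in the statement as the two main tools. First I would clear denominators: multiplying the target identity $\trG(s) = \frac{(p_i-q_i)r}{s-p_i} + \frac{e(s)}{d(s)}$ through by $(s-p_i)d(s)$ and substituting $\trG(s) = \frac{(s-q_i)n(s)}{(s-p_i)d(s)}$ reduces the claim to the polynomial equation
$$(s-q_i)n(s) = (p_i-q_i)\,r\,d(s) + (s-p_i)e(s).$$
Since everything downstream is an equality of polynomials, no analytic or convergence subtleties enter.

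Next I would substitute the given definition of $e(s)$ and expand $(s-p_i)e(s)$ into its three summands. The first simplification uses the Bezout relation $a(s)d(s)+b(s)(s-p_i)=1$ in the form $a(s)d(s) = 1 - b(s)(s-p_i)$ to rewrite the term $(s-p_i)n(s)a(s)d(s)$; combining the resulting $b(s)$-contribution with the explicit $b(s)$-term and factoring out $(s-p_i)n(s)b(s)$ produces the cancellation $(s-q_i)-(s-p_i)=p_i-q_i$. After collecting terms, the three summands collapse to
$$(s-p_i)e(s) = (s-p_i)n(s) + (s-p_i)(p_i-q_i)\big(n(s)b(s)+f(s)d(s)\big).$$

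At this point the claim reduces, after cancelling the common factor $(p_i-q_i)$ (the degenerate case $p_i=q_i$ being a genuine pole-zero cancellation, for which the coefficient $(p_i-q_i)r$ is zero and the statement is trivial), to the single identity
$$n(s) = (s-p_i)\big(n(s)b(s)+f(s)d(s)\big) + r\,d(s).$$
To close this I would invoke the division relation $n(s)a(s)=f(s)(s-p_i)+r$ in the form $(s-p_i)f(s)=n(s)a(s)-r$, substitute it into the right-hand side so that the two $r\,d(s)$ terms cancel, and then apply Bezout a final time to recognize $(s-p_i)b(s)+a(s)d(s)=1$; what remains is exactly $n(s)$, completing the verification.

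Finally, I would append a short remark justifying the concluding interpretation by identifying $(p_i-q_i)r$ as the residue of $\trG$ at $p_i$. Evaluating Bezout at $s=p_i$ gives $a=1/d(p_i)$ (using that $a(s)$ is constant), and evaluating the division relation at $s=p_i$ gives $r=n(p_i)/d(p_i)$; hence $(p_i-q_i)r=(p_i-q_i)n(p_i)/d(p_i)=\lim_{s\to p_i}(s-p_i)\trG(s)$, the standard residue, making transparent that a small $p_i-q_i$ forces a small coefficient on $\frac{1}{s-p_i}$. I expect no real obstacle: the argument is a bookkeeping exercise in polynomial algebra, and the only points requiring care are tracking the cancellations cleanly and separating out the trivial $p_i=q_i$ case.
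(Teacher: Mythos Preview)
Your proposal is correct, but you take a different route than the paper. The paper proceeds \emph{constructively}: it multiplies the numerator $n(s)(s-q_i)$ by $1=a(s)d(s)+b(s)(s-p_i)$, splits the resulting fraction into the two pieces $\frac{n(s)(s-q_i)b(s)}{d(s)}$ and $\frac{n(s)(s-q_i)a(s)}{s-p_i}$, then rewrites the second piece via $(s-q_i)=(s-p_i)+(p_i-q_i)$ and finally applies the division relation $n(s)a(s)=f(s)(s-p_i)+r$ to extract the residue $(p_i-q_i)r$. Your approach is \emph{verificational}: you take the claimed formula for $e(s)$ as given, clear denominators, and check the resulting polynomial identity by unwinding the Bezout and division relations in reverse. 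Both use the same two ingredients, but the paper's derivation explains where the formula $e(s)$ comes from, whereas your route is slightly quicker once the formula is on the table and arguably easier to check line by line. Your closing residue computation $(p_i-q_i)r=(p_i-q_i)n(p_i)/d(p_i)=\lim_{s\to p_i}(s-p_i)\trG(s)$ is a nice addition not present in the paper and makes the ``small Jordan block'' interpretation immediate.
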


\begin{proof}
Since $s-p_i$ and $d(s)$ are coprime, there exists polynomials $a(s)$ and $b(s)$ where $a(s)d(s) + b(s)(s-p_i) = 1$, and $\operatorname{deg}(a(s)) = 0, \operatorname{deg}(b(s)) < \operatorname{deg}(d(s))$. We can then write 
\begin{align*}
    \frac{n(s)(s-q_i)}{d(s)(s-p_i)} &= \frac{n(s)(s-q_i)(a(s) d(s) + b(s) (s-p_i))}{d(s) (s-p_i)} \\
    &= \frac{n(s)(s-q_i) b(s)}{d(s)} + \frac{n(s) (s-q_i) a(s)}{s-p_i} \\
    &= \frac{n(s)(s-q_i) b(s)}{d(s)} + n(s) a(s) + \frac{(p_i - q_i)n(s)a(s)}{s-p_i}.
\end{align*}
Observe that we can write $n(s)a(s) = f(s) (s-p_i) + r$, where $r \in \mathbb{C}$. Thus, continuing from the derivations, we have
\begin{align*}
    \frac{n(s)(s-q_i)}{d(s)(s-p_i)}  = \frac{n(s)(s-q_i) b(s) + n(s)a(s) d(s) + (p_i-q_i)f(s) d(s)}{d(s)} +  \frac{(p_i - q_i)r}{s-p_i}.
\end{align*}

\end{proof}

We now consider the reverse direction, where we go from the existence of a small Jordan block corresponding to a pole to a near pole-zero cancellation at that pole.

We first recall Rouch\'e's theorem, a standard result from complex analysis \cite{stein2010complex} which will be useful for us.

\begin{theorem}[Rouch\'e's theorem]
Suppose $f$ and $g$ are holomorphic on an open set $U$ containing a circle $C$ and its interior. Suppose $\abs*{f(z)} > \abs*{g(z)}$ for all $z$ in $C$.\footnote{Technically, we only need that $\abs*{f(z) + tg(z)} > 0$ for all $t \in [0,1]$ and all $z \in C$.} Then, $f$ and $f+g$ have the same number of zeros (including multiplicity) inside the circle $C$.
\end{theorem}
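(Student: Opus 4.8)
The plan is to obtain Rouch\'e's theorem as a consequence of the argument principle together with a homotopy (continuity) argument. Recall that for a function $h$ holomorphic on $U$ with no zeros on the circle $C$, the argument principle expresses the number of zeros of $h$ inside $C$, counted with multiplicity, as the contour integral $N_h = \frac{1}{2\pi i}\oint_C \frac{h'(z)}{h(z)}\,dz$, which equals the winding number of the image curve $h(C)$ about the origin. I would take this as the known input (it rests in turn on Cauchy's theorem and the residue theorem) and reduce the statement to a continuity argument along the family interpolating between $f$ and $f+g$.

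First I would introduce the homotopy $h_t(z) := f(z) + t\,g(z)$ for $t \in [0,1]$, so that $h_0 = f$ and $h_1 = f+g$. The crucial preliminary step is to check that $h_t$ has no zeros on $C$ for any $t \in [0,1]$: for $z \in C$ the triangle inequality gives $\abs*{h_t(z)} \ge \abs*{f(z)} - t\abs*{g(z)} \ge \abs*{f(z)} - \abs*{g(z)} > 0$, using the hypothesis $\abs*{f} > \abs*{g}$ on $C$ (alternatively one invokes directly the weaker footnote condition $\abs*{f + tg} > 0$). Since $C$ is compact and the denominator never vanishes, $\abs*{h_t(z)}$ is in fact bounded below by a positive constant uniformly over $[0,1]\times C$.

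Next I would define $N(t) := \frac{1}{2\pi i}\oint_C \frac{h_t'(z)}{h_t(z)}\,dz = \frac{1}{2\pi i}\oint_C \frac{f'(z) + t\,g'(z)}{f(z) + t\,g(z)}\,dz$. By the argument principle, $N(t)$ is an integer equal to the number of zeros of $h_t$ inside $C$. The integrand is jointly continuous in $(t,z)$ on the compact set $[0,1]\times C$, because the denominator is bounded away from zero by the previous step, so $N(t)$ is a continuous function of $t$. A continuous integer-valued function on the connected interval $[0,1]$ must be constant, hence $N(0) = N(1)$; that is, $f$ and $f+g$ have the same number of zeros inside $C$, which is the claim.

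The only genuinely load-bearing point is the uniform lower bound $\abs*{h_t(z)} \ge \abs*{f(z)} - \abs*{g(z)} > 0$ on $C$, which simultaneously guarantees that each $N(t)$ is well-defined and that $N(t)$ varies continuously; everything else is routine once the argument principle is granted. The main obstacle, if one does not wish to assume the argument principle, is that one would have to reprove it via the residue theorem, or else argue homotopy-invariance of the winding number of $h_t(C)$ around the origin directly --- but that reduces to essentially the same computation, so taking the argument principle as a citable input is the cleanest route.
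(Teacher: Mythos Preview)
Your proof is correct and is precisely the standard homotopy/argument-principle proof of Rouch\'e's theorem. However, there is nothing to compare it against: the paper does not supply its own proof of this statement. It is recalled only as ``a standard result from complex analysis'' with a citation to \cite{stein2010complex}, and is then used as a black box in the lemma that follows. So your write-up goes beyond what the paper does here; the paper simply quotes the theorem.
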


\begin{lemma}
Consider a minimal SISO transfer function $\trG(s)$. Suppose $p$ is a (possibly repeated) pole of $G$, with order $n_p$. By applying partial fraction factorization, $\trG(s)$ can be (uniquely) decomposed additively as 
\begin{align*}
    \trG(s) = \sum_{j=1}^{n_p} \frac{\alpha_j}{(s-p)^j} + \frac{u(s)}{d(s)},
\end{align*}
where $u(s)$ is coprime to $d(s)$, and $d(s)$ is coprime to $(s-p)$. Then, $\min_{p' \neq p: (s-p)^ju(s) = 0} \abs*{p - p'} > 0$, and for any $\ep > 0$ such that $$\ep < \min_{p' \neq p: (s-p)^ju(s) = 0} \abs*{p - p'},$$ 
there exists $\delta(\ep,G) > 0$ such that if $\max_j \abs*{\alpha_j} < \delta(\ep,G)$, then 
\begin{align*}
    \trG(s) = \frac{\prod_{j=1}^{n_p} (x-p+ \ep_j)n(s)}{(x-p)^{n_p} d(s)}
\end{align*}
such that $\abs*{\ep_j} < \ep$ for some polynomial $n(s)$ where $n(s)$ is coprime to both $(x-p)$ and $d(s)$. 
\end{lemma}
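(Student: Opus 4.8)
The plan is to clear denominators in the given partial fraction decomposition and then invoke Rouch\'e's theorem to count the numerator's zeros clustering near $p$ as the Jordan coefficients $\alpha_j$ shrink. First I would write $\trG$ over the common denominator $(s-p)^{n_p}d(s)$, obtaining $\trG(s) = N(s)/\big((s-p)^{n_p}d(s)\big)$ with
\[
N(s) = \sum_{j=1}^{n_p}\alpha_j (s-p)^{n_p-j} d(s) + u(s)(s-p)^{n_p}.
\]
The first assertion, positivity of $\min_{p'\neq p}\abs*{p-p'}$ over the relevant zero set, is immediate: since $u$ is coprime to $(s-p)$ we have $u(p)\neq 0$, so the finite root set of $u$ (equivalently, the roots of $(s-p)^{n_p}u(s)$ other than $p$) stays a fixed positive distance from $p$. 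I would then fix any $\ep$ below this distance and below $1$.

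Next I would set up Rouch\'e on the circle $C=\{\abs*{s-p}=\ep\}$ with the splitting $f(s)=u(s)(s-p)^{n_p}$ and $g(s)=\sum_{j=1}^{n_p}\alpha_j(s-p)^{n_p-j}d(s)$, so that $N=f+g$. On $C$ one has $\abs*{f(s)}\geq m_u\ep^{n_p}$ with $m_u:=\min_{s\in C}\abs*{u(s)}>0$, while $\abs*{g(s)}\leq (\max_j\abs*{\alpha_j})\,M_d\,n_p$ with $M_d:=\max_{s\in C}\abs*{d(s)}$, using $\ep\leq 1$ so that the dominant power of $(s-p)$ is the $j=n_p$ term. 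Hence, defining $\delta(\ep,G):=m_u\ep^{n_p}/(n_p M_d)$, the hypothesis $\max_j\abs*{\alpha_j}<\delta(\ep,G)$ forces $\abs*{g}<\abs*{f}$ on $C$. Rouch\'e's theorem then gives that $N$ and $f$ have equally many zeros inside $C$; since $f$ has exactly the $n_p$-fold zero at $p$ in $C$ (no root of $u$ lies in $C$ by our choice of $\ep$), $N$ has exactly $n_p$ zeros $p-\ep_1,\dots,p-\ep_{n_p}$ in $C$, each with $\abs*{\ep_j}<\ep$. Factoring these out yields $N(s)=\prod_{j=1}^{n_p}(s-p+\ep_j)\,n(s)$, with $n$ carrying the remaining zeros (all outside $C$), which is the asserted numerator form.

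It remains to verify the two coprimality claims for $n(s)$, and this is where minimality enters and is the main point to get right. Evaluating directly, $N(p)=\alpha_{n_p}d(p)$, which is nonzero because $p$ is a pole of \emph{exact} order $n_p$ (so $\alpha_{n_p}\neq 0$) and $d(p)\neq 0$; thus $p$ is not a zero of $N$, forcing each $\ep_j\neq 0$ and $n(p)\neq 0$, i.e. $n$ is coprime to $(s-p)$. For coprimality of $n$ with $d$, I would first show $N$ is coprime to $d$: reducing $N$ modulo $d$ leaves $u(s)(s-p)^{n_p}$, so any common zero $s_0$ of $N$ and $d$ would satisfy $u(s_0)(s_0-p)^{n_p}=0$; since $d$ is coprime to $(s-p)$ we have $s_0\neq p$, whence $u(s_0)=0$, contradicting coprimality of $u$ and $d$. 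As $n$ divides $N$, coprimality of $N$ and $d$ descends to $n$ and $d$ (equivalently, minimality of $\trG$ guarantees $N$ and $(s-p)^{n_p}d$ share no factor, recovering both statements at once). The main obstacle is thus not the Rouch\'e count itself but the bookkeeping of these coprimality conditions, ensuring that the factored numerator genuinely exhibits $n_p$ near-cancelling zeros while the realization stays minimal.
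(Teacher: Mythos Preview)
Your proposal is correct and follows essentially the same route as the paper: clear denominators, split the numerator as $f(s)=u(s)(s-p)^{n_p}$ plus the $\alpha$-dependent piece, and apply Rouch\'e's theorem on $\partial B(p,\ep)$ to conclude that exactly $n_p$ zeros of the numerator lie within $\ep$ of $p$. Your version is in fact more detailed than the paper's: you supply an explicit $\delta(\ep,G)=m_u\ep^{n_p}/(n_p M_d)$ where the paper invokes only continuity, and you carry out the coprimality checks for $n(s)$ (against both $(s-p)$ and $d(s)$) that the paper's proof omits entirely. One cosmetic point: you impose $\ep<1$ to simplify the bound on $g$, which is slightly stronger than the lemma's hypothesis; to match the stated range of $\ep$ exactly, replace the factor $n_p$ by $\sum_{j=1}^{n_p}\ep^{\,n_p-j}$ in your estimate for $|g|$ and in $\delta$.
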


\begin{proof}
Since 
$$ \trG(s) = \sum_{j=1}^{n_p} \frac{\alpha_j}{(s-p)^j} + \frac{u(s)}{d(s)}, $$
it follows that 
$$\trG(s) = \frac{\left(\sum_{j=1}^{n_p} \alpha_j (s-p)^{n_p-j}\right) d(s) + u(s) (s-p)^{n_p}}{(s-p)^{n_p}d(s)}.$$
Let $e(s) \coloneqq \left(\sum_{j=1}^{n_p} \alpha_j (s-p)^{n_p-j}\right) d(s)$, and let $f(s) \coloneqq u(s)(s-p)^{n_p}$. Clearly, $f(s)$ has a repeated zero with multiplicity $n_p$ at $s = p$. Consider any $\ep > 0$ such that $\ep < \min_{p' \neq p: (s-p)^j u(s) = 0} \abs*{p - p'}$; we note that $\min_{p' \neq p: (s-p)^j u(s) = 0} \abs*{p - p'}$ exists and is positive\footnote{As an aside, we note it may be infinite if $u(s)$ is a constant or if $u(s)$ only has a root at $s = p$).}. Then, it follows that $\abs*{f(s)} > 0$ for any $s \in \partial B(p, \ep) \coloneqq \{s \in \mathbb{C}: \abs*{p - s} = \ep$\}; moreover, by definition of $\ep$, the only zeros of $f(s)$ within the closed disc $B(p,\ep) \coloneqq \{s \in \mathbb{C}: \abs*{s - p} \leq \ep \}$ are at $s=p$. By continuity, there exists some $\delta(\ep,G) > 0$ such that when $\max_j \abs*{\alpha_j} < \delta(\ep,G)$, it follows that $\abs*{e(s)} < \abs*{f(s)}$ for any $s \in \partial B(p,\ep)$. In the event that holds, since $e(s)$ and $f(s)$ are holomorphic on $\mathbb{C}$, it follows from Rouch\'e's theorem that $e(s) + f(s)$ has the same number of zeros in the closed disc $B(p,\ep)$ as $f(s)$. Since $f(s)$ has precisely $n_p$ zeros within $B(p,\ep)$, our desired result follows.

\end{proof}

\section{Numerical Examples} \label{section:examples}




\subsection{Comparing balanced truncation to modal truncation} 

We here compare the performance of balanced truncation versus modal truncation. 
Consider the following plant and controller pair
$$
\mathbf{G} = \left[\begin{array}{c|c}
    \begin{bmatrix}
    -6.00 & -13.84 & -11.95 \\
    1 & 0 & 0 \\
    0 & 1 & 0
    \end{bmatrix} & \begin{bmatrix}
    1 \\ 0 \\0
    \end{bmatrix} \\
    \hline
    \begin{bmatrix}
    -1.74 & -7.63 & -8.37
    \end{bmatrix} & 0
\end{array} \right], \quad \trK \!=\! \left[\begin{array}{c|c}
    \begin{bmatrix}
    -2.1541 & -0.0104 & 0 \\
    -0.0104 & -2.1731 & 0 \\
    0 & 0 & 0.2
    \end{bmatrix} & \begin{bmatrix}
    0 \\
    1.2815 \\ 0.5
    \end{bmatrix}  \\
    \hline
    \begin{bmatrix}
    -0.8097 & -1.2368 & 0.5
    \end{bmatrix} & 0
\end{array} \right]
$$
\normalsize
It is easy to check numerically that $\trK$ internally stabilize $\mathbf{G}$. Indeed, the closed-loop poles (i.e., eigenvalues of $A_{\mathrm{cl}}$ in \cref{eq:internal-stability}) are (-0.38,-2.53, -2.15), each with multiplicity 2. Note that this controller $\mathbf{K}$ has an unstable mode $\lambda = 0.2$, thus the standard balanced truncation in \Cref{lemma:truncated_distinct_evals_assumption} is inapplicable directly. As discussed in \Cref{subsection:stable-unstable-separation}, we separate the controller as $\trK = \trK_< + \trK_\geq$, and perform reduction on the stable part $\trK_<$ (i.e. \Cref{algorithm:balanced_truncation_stable_unstable,algorithm:modal_truncation}). 

\vspace{1mm}
\begin{remark}[System generation]
To illustrate \Cref{theorem:stable_K_truncation_result} and our results in \Cref{section:controller_reduction_via_balanced_truncation,section:order_reduction_modal}, we need to consider a plant $\mathbf{G}$ and a (possibly unstable) controller $\mathbf{K}$ that is internally stabilizing.  To generate such instances, we first generate a random stable and minimal system, $\trK_<$ and another unstable part $\trK_{\geq}$. We define the augmented system $\trK = \trK_< + \trK_\geq$, and compute a stabilizing controller for $\trK$, which we call $\trG$. Finally, we treat $\trG$ as the system plant and $\trK$ as the controller. It is thus clear that $\trK$ internally stabilizes $\trG$ (duality between plant and controller). 
\end{remark}

After performing balanced truncation and modal truncation, we get two reduced-order controllers (of order 2) $\trK_{r,\mathrm{bt}}$ and $\trK_{r,\mathrm{mt}}$ respectively. Both $\trK_{r,\mathrm{bt}}$ and $\trK_{r,\mathrm{mt}}$ satisfy the bound in \cref{eq:K_Kr_pole_stability_condition_1}, thus internally stabilizes the plant, guaranteed by \Cref{theorem:stable_K_truncation_result}. Indeed, under \Cref{assumption-white}, the LQG cost of the two truncated controllers are listed in \Cref{table:compare_bal_mod_trunc_single}. 
In this example, the LQG cost of the balanced truncation is very close to the original performance, while the modal truncated controller has a slightly higher LQG cost. Note that the quantity $\norm*{\trK - \trK_r}_{\mathcal{H}_\infty}$ is significantly smaller in the case of balanced truncation. 
%
\cref{theorem:stable_K_truncation_result} states that the upper bound on $J(\trK_r) - J(\trK)$ is tighter when $\norm*{\trK - \trK_r}_{\mathcal{H}_\infty}$ is smaller; since $\norm*{\trK - \trK_{r,\mathrm{bt}}}_{\mathcal{H}_\infty} < \norm*{\trK - \trK_{r,\mathrm{mt}}}_{\mathcal{H}_\infty}$, this explains why $J(\trK_{r,\mathrm{bt}})$ is lower than $J(\trK_{r,\mathrm{mt}})$ in this case. We provide more extensive comparisons in \cref{appendix:simulation}.

\begin{table}[h]
\begin{center}
\caption{Comparison of bal. \& mod. trunc.} 
\label{table:compare_bal_mod_trunc_single}
\begin{tabular}{cccc} 
\toprule 
     & Original & Bal. Trunc. & Mod. Trunc\\
     \hline
    LQG cost & 8.0552 & 8.0552 & 8.9928 \\
    $\norm*{\trK - \trK_r}_{\mathcal{H}_\infty}$ & --- & $2.6572 \cdot 10^{-6}$ & 0.0580 \\ 
    \bottomrule
\end{tabular}
\end{center}
\end{table}

\begin{figure}[t]
    \centering
    \includegraphics[width=.45\linewidth]{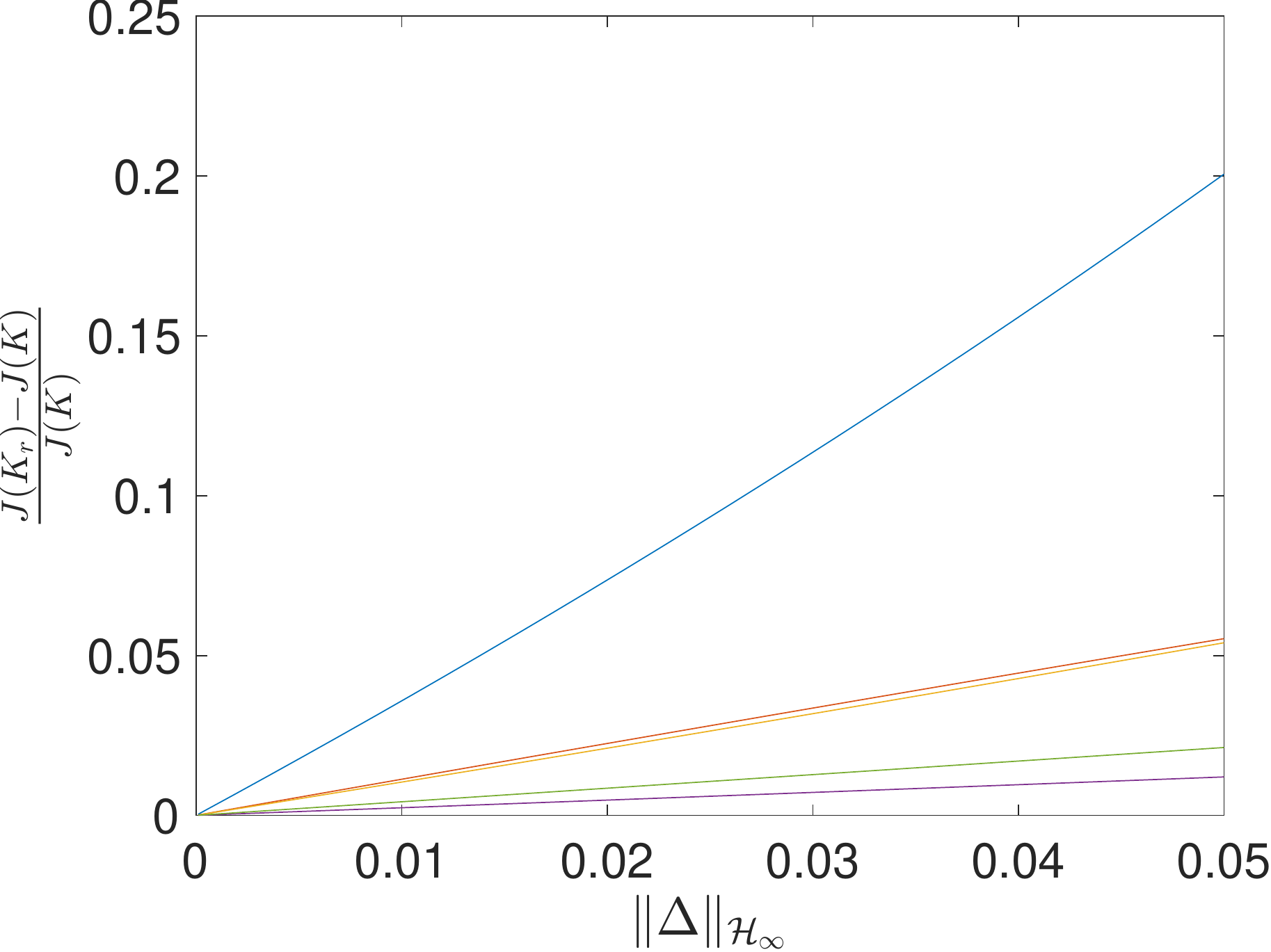}
  \caption{Relationship between LQG cost gap $J(\trK_r) - J(\trK)$ and the $\mathcal{H}_\infty$ norm of the truncated component (under modal reduction approach).}
  \label{fig:scaling_up}
\end{figure}

\textbf{Scaling effect of $\norm*{\trDelta}_{\mathcal{H}_\infty}$.} We next study how the performance gap behaves as a function of the size of the truncated component. For this analysis, we (randomly) generate five stable and minimal SISO systems of order 4, $\trK_r$, and augment the system by adding a stable mode $\trDelta$, where
\begin{align*}
        \trK_r \!= \! \left[\begin{array}{c|c}
        \begin{bmatrix}
        1.5 & -1 & -0.21 \\
        3.00 & -0.43 & -1.00 \\
        2.00 & -0.07 & -5.00
        \end{bmatrix} & \begin{bmatrix}
        0.18 \\
        0.97 \\
        1.2
        \end{bmatrix}  \\
        \hline
        \\[-8pt]
        \begin{bmatrix}
        1.00 & 2.00 & 3.00
        \end{bmatrix} & 0
    \end{array} \right],  \ \trDelta \! = \! \left[\begin{array}{c|c}
    -1 & \sqrt{\eps} \\
    \hline
    \sqrt{\eps} & 0
\end{array} \right].
\end{align*}
\normalsize
We denote the augmented system as $\trK := \trK_r + \trDelta$. We then generate a stabilizing controller, $\trG$, which stabilizes $\trK$.  Viewing $\trG$ as the system plant, we then compare the LQG cost of $\trK$ and $\trK_r$ on the system $\trG$, as the $\mathcal{H}_\infty$ norm of the truncated component, i.e. $\norm*{\trDelta}_{\mathcal{H}_\infty}$, varies (to be precise, we plot 30 $\trDelta$'s, each corresponding to a different $\eps$, where we let $\eps$ range (equally spaced) between 0.0001 and 0.05). As we can see in \cref{fig:scaling_up}, for the five set of controllers, there is close to a linear relationship (with different slopes) between the LQG cost gap ratio $\frac{J(\trK_r) - J(\trK)}{J(\trK)}$ and the $\mathcal{H}_\infty$ norm of the truncated component, i.e. $\norm*{\trDelta}_{\mathcal{H}_\infty}$; this is consistent with the upper bound on $J(\trK_r)$ in \cref{theorem:modal_truncation_SISO}. 


\subsection{Truncating unstable mode(s)} 


We here consider an example to illustrate \Cref{theorem:modal_truncation_SISO}. Consider the following plant $\trG$
\begin{align*}
    \trG = \left[\begin{array}{c|c}
        \begin{bmatrix}
        -5.86 & -9.50 & 0.56 \\
        1 & 0 & 0 \\
        0 & 1 & 0
        \end{bmatrix} & \begin{bmatrix}
        1 \\
        0 \\
        0
        \end{bmatrix} \\
        \hline
        \\[-10pt]
        \begin{bmatrix}
        -7.18 & -25.61 & -8.41
        \end{bmatrix} & 0
    \end{array}\right].
\end{align*}
\normalsize
It can be verified that the controller
$
    \trK = \left[\begin{array}{c|c}
    A_\mK & B_\mK \\
    \hline 
    C_\mK & 0
    \end{array} \right], 
$
with 
%
$$A_\mK\! =\! \begin{bmatrix}
        1.37 & 0 & 0  \\
        0 & -0.37 & 0 \\
        0 & 0 & 0.34
    \end{bmatrix}, \ B_\mK \!=\! \begin{bmatrix}
    0.19 \\
    0.04 \\
    0.04
    \end{bmatrix}, \ C_\mK \!=\! \begin{bmatrix}
    3.79 \\ 4.14 \\ -1.57
    \end{bmatrix}^\tr. $$
\normalsize
internally stabilizes $\trG$. 
Applying \Cref{algorithm:modal_truncation}, we obtain an order 2 controller $\trK_r$, which removes the last (unstable) mode of $A_\mK$, leading to 
$
    \trK_r = \left[\begin{array}{c|c}
    A_{\mK,r} & B_{\mK,r} \\
    \hline 
    C_{\mK,r} & 0
    \end{array} \right], 
$
 with 
$$\quad A_{\mK,r} \!=\! \begin{bmatrix}
        1.37 & 0   \\
        0 & -0.37 
    \end{bmatrix}, \; B_{\mK,r}\! =\! \begin{bmatrix}
    0.19 \\
    0.04
    \end{bmatrix}, \; C_{\mK,r} \!=\! \begin{bmatrix}
    3.79 \\ 4.14
    \end{bmatrix}^\tr,$$
    \normalsize
The truncated component $\trDelta$ is unstable and takes the form
\begin{align*}
    \trDelta = \left[\begin{array}{c|c}
    0.34 & 0.04 \\
    \hline 
    -1.57 & 0
    \end{array} \right],
\end{align*}
\normalsize
which satisfies the bound \cref{eq:K_Kr_pole_stability_condition_unstable}. Thus, \Cref{theorem:modal_truncation_SISO} guarantees that the reduced-order controller $\trK_r $ still internally stabilizes the plant. Indeed, numerical computation shows that the LQG cost of the original controller, $J(\trK)$, is 343.2, while the LQG cost of the truncated controller, $J(\trK_r)$, is 58.2. In this case , modal truncation not only yields a stabilizing lower-order controller, but also a cost of lower LQG cost.\footnote{For this instance, the theoretical upper bound posited in \Cref{theorem:modal_truncation_SISO} is significantly larger than the original cost.}

\section{Conclusion} \label{section:conclusion}


We have presented on controller reduction for general non observer-based controllers using balanced truncation and modal truncation. For SISO systems, we demonstrate how LQG control may be performed even when there are no stable components in the controller. We hope that our work will be useful not only for policy optimization in LQG control but also for the controller reduction community. Two interesting future directions are 1) extending truncation of unstable modes to MIMO systems and 2) applying the results to escape saddle points in the LQG policy optimization \cite{zheng2022escaping}.

\appendix




\section{Background on  Balanced realization}
\label{subsec:balanced_realization}
The following result, which states that any stable transfer function has a balanced realization, is well-known from classical control theory.

\begin{proposition}[cf.  {\cite[Section~26.2]{dahleh2004lectures}}]
\label{prop:balanced_realization}
Consider a stable transfer function $\trG$ with a minimal state-space realization 
$$\trG = \left[\begin{array}{c|c}
    A & B \\
     \hline
    C &  D
\end{array}\right],$$
i.e. the transfer function can be implemented in state-space as the linear system
\begin{align*}
    &\dot{x} = Ax + Bu, \\
    &y = Cx + Du,
\end{align*}
for some $A \in \bbR^{n \times n}, B \in \bbR^{n \times p}, C \in \bbR^{m \times n}$ and $D \in \bbR^{m \times p}$,
where $A \in \bbR^{n \times n}$ is a stable matrix, and $(A,B)$ is controllable and $(A,C)$ is observable. Recall that the corresponding controllability and observability gramians $W_c$ and $W_o$ are defined as the positive definite\footnote{The positive definite nature of the solutions follows from the definition of minimality.} solutions to the Lyapunov equations
\begin{subequations}
\begin{align}
    &AW_c + W_cA^\tr + BB^\tr = 0, \label{eq:Lyapunov_Wc} \\
    &A^\tr W_o + W_o A + C^\tr C = 0 \label{eq:Lyapunov_Wo}
\end{align}
\end{subequations}
respectively.
Then, there exists a balanced state-space realization of $\trG$ in the form
$$\trG = \left[\begin{array}{c|c}
   \tilde{A}  & \tilde{B}  \\ 
     \hline 
     \\[-10pt]
    \tilde{C} &  \tilde{D}
\end{array} \right],$$
where 
$$\tilde{A} = TAT^{-1}, \ \tilde{B} = TB, \ \tilde{C} = CT^{-1}, \ \ \tilde{D} = D$$ 
for some invertible $T \in \bbR^{n \times n}$, such that the corresponding controllability and observability gramians $\tilde{W}_c = TW_cT^{\tr}$ and $\tilde{W}_o = T^{-\tr} W_o T^{-1}$ satisfy the relation
$$\tilde{W}_c = \tilde{W}_o = \Lambda$$
for some positive diagonal matrix $\Lambda \in \bbR^{n \times n}$.
\end{proposition}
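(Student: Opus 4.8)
The plan is to construct the balancing transformation $T$ explicitly from a symmetric factorization of $W_c$ together with the spectral decomposition of a related positive-definite matrix, exactly mirroring the construction underlying \Cref{algorithm:balanced_truncation}. Since the realization is minimal, both gramians are positive definite, so every factorization below is well-defined.

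First I would factorize the controllability gramian as $W_c = Q Q^\tr$ for some invertible $Q \in \bbR^{n \times n}$ (e.g.\ a Cholesky factor or a symmetric square root); this is possible precisely because $W_c \succ 0$. Next, since $W_o \succ 0$ and $Q$ is invertible, the matrix $Q^\tr W_o Q$ is symmetric positive definite, so the spectral theorem yields an orthogonal $U$ and a positive diagonal matrix $\Sigma$ with $Q^\tr W_o Q = U \Sigma^2 U^\tr$. I would then \emph{define} the coordinate transformation
$$ T := \Sigma^{1/2} U^\tr Q^{-1}, \qquad T^{-1} = Q U \Sigma^{-1/2}, $$
and set $\Lambda := \Sigma$.

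The core of the argument is then two direct computations verifying that the transformed gramians equal $\Lambda$. For controllability,
$$ \tilde W_c = T W_c T^\tr = \Sigma^{1/2} U^\tr Q^{-1} (QQ^\tr) Q^{-\tr} U \Sigma^{1/2} = \Sigma^{1/2} U^\tr U \Sigma^{1/2} = \Sigma, $$
using $Q^{-1}QQ^\tr Q^{-\tr}=I$ and $U^\tr U = I$. For observability,
$$ \tilde W_o = T^{-\tr} W_o T^{-1} = \Sigma^{-1/2} U^\tr Q^\tr W_o Q U \Sigma^{-1/2} = \Sigma^{-1/2} U^\tr (U\Sigma^2 U^\tr) U \Sigma^{-1/2} = \Sigma. $$
Hence $\tilde W_c = \tilde W_o = \Lambda$ with $\Lambda$ positive diagonal, as required. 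Finally, I would confirm that $\tilde W_c$ and $\tilde W_o$ so defined are genuinely the gramians of the transformed realization $(\tilde A, \tilde B, \tilde C, \tilde D) = (TAT^{-1}, TB, CT^{-1}, D)$; this follows from the transformation-invariance of the Lyapunov equations, since conjugating \cref{eq:Lyapunov_Wc} by $T$ on the left and $T^\tr$ on the right gives $\tilde A (TW_cT^\tr) + (TW_cT^\tr)\tilde A^\tr + \tilde B \tilde B^\tr = T(AW_c + W_cA^\tr + BB^\tr)T^\tr = 0$, so $TW_cT^\tr$ solves the controllability Lyapunov equation for the new realization, and the analogous conjugation of \cref{eq:Lyapunov_Wo} by $T^{-\tr}$ and $T^{-1}$ shows $T^{-\tr}W_oT^{-1}$ solves the observability one.

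I do not anticipate a serious obstacle here, as the result is classical and reduces to routine linear algebra; the one point requiring care is the bookkeeping of the two transformation laws $\tilde W_c = TW_cT^\tr$ and $\tilde W_o = T^{-\tr}W_oT^{-1}$ (the gramians transform contravariantly to one another), together with ensuring the factors $\Sigma^{1/2}$ and $\Sigma^{-1/2}$ cancel correctly in each of the two computations above.
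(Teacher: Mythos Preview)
Your proposal is correct and matches the paper's proof essentially step for step: the same factorization $W_c = QQ^\tr$, the same spectral decomposition $Q^\tr W_o Q = U\Sigma^2 U^\tr$, the same choice $T = \Sigma^{1/2}U^\tr Q^{-1}$, and the same two verifications that $TW_cT^\tr = \Sigma$ and $T^{-\tr}W_oT^{-1} = \Sigma$. The only cosmetic difference is that the paper first motivates the form of $T$ via a ``suppose such $T$ exists'' argument before verifying, whereas you define $T$ outright and check; the content is identical.
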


\begin{proof}
First, for any invertible transformation $T \in \bbR^{n \times n}$, we note that if we define the coordinate $\tilde{x} = Tx$, then the transfer function $\trG$ can be implemented in state-space as the linear system
\begin{align*}
    &\tilde{x} = Tx = TAT^{-1}\tilde{x} + TBu, \\
    &y = CT^{-1} \tilde{x} + Du.
\end{align*}
Henceforth, for notational convenience we denote $\tilde{A} = TAT^{-1}, \tilde{B} = TB, \tilde{C} = CT^{-1}, \tilde{D} = D$. 
The corresponding controllability and observability gramians $\tilde{W}_c$ and $\tilde{W}_o$ are the solutions to the Lyapunov equations
\begin{align*}
    &TAT^{-1} \tilde{W}_c + \tilde{W}_c(TAT^{-1})^\tr + TB(TB)^\tr = 0, \\
    &(TAT^{-1})^\tr \tilde{W}_o + \tilde{W}_o TAT^{-1} + (CT^{-1})^\tr CT^{-1} = 0
\end{align*}
By multiplying \cref{eq:Lyapunov_Wc} on the left by $T$ and on the right by $T^\tr$, and similarly multiplying \cref{eq:Lyapunov_Wo} on the left by $T^{-1}$ and on the right by $T^{-\tr}$, it follows that $$\tilde{W}_c = TW_cT^\tr, \qquad \tilde{W}_o = T^{-\tr} W_o T^{-1},$$
where both $\tilde{W}_c$ and $\tilde{W}_o$ are symmetric, and also positive definite since $W_c$ and $W_o$ are positive definite. Our goal is to find a $T$ such that $\tilde{W}_c = \tilde{W}_o = \Lambda$ for some positive definite diagonal matrix $\Lambda$, i.e. $\tilde{W}$ and $\tilde{W}_o$ are equal to each other and are a diagonal matrix with positive entries along the diagonal. Moreover, we will see that $\Lambda^2$ will share the same eigenvalues as the matrix product $W_c W_o$. 

Suppose such a $T$ exists. That implies then that $$T W_c W_o T^{-1} = (T W_cT^\tr)(T^{-\tr} W_o T^{-1}) = \Lambda \Lambda =  \Lambda^2.$$ 
Now, note that $W_c$ can be written in the form $W_c = QQ^\tr$ for some invertible $Q$, for instance by taking its Cholesky decomposition. Then, we have
$$\Lambda^2  = TW_c W_o T^{-1} = TQQ^\tr W_o T^{-1} = (TQ)Q^\tr W_o Q (TQ)^{-1}.$$
This implies that $Q^\tr W_o Q$ is similar to $\Lambda^2$, and hence there exists an orthonormal matrix $U$ such that $Q^\tr W_o Q = U \Lambda^2 U^\tr$. Thus, we have
$$\Lambda^2 = (TQ)Q^\tr W_o Q (TQ)^{-1} = (TQ)U\Lambda^2 U^\tr (TQ)^{-1} = TQ U \Lambda^{-1/2} \Lambda^2 \Lambda^{1/2} U^\tr Q^{-1} T^{-1}.$$
Set $TQU\Lambda^{-1/2} = I$, such that $T = \Lambda^{1/2} U^\tr Q^{-1}$. Then, 
\begin{align*}
    &TW_cT^\tr = (\Lambda^{1/2} U^\tr Q^{-1})QQ^\tr (\Lambda^{1/2} U^\tr Q^{-1})^\tr = \Lambda^{1/2} U^\tr U \Lambda^{1/2} = \Lambda, \\
    &T^{-\tr} W_o T^{-1} = (\Lambda^{1/2} U^\tr Q^{-1})^{-\tr} W_o (\Lambda^{1/2} U^\tr Q^{-1})^{-1} =  \Lambda^{-1/2}U Q^\tr W_o Q U^\tr \Lambda^{1/2} = \Lambda^{-1/2} \Lambda^2 \Lambda^{1/2} = \Lambda.
\end{align*}
Hence, the desired result follows by applying the coordinate transformation $\tilde{x} = Tx,$ where $T = \Lambda^{1/2} U^\tr Q^{-1}$. Note that we have not explicitly specified what the positive definite diagonal matrix $\Lambda$ should be. To this end, we note that since $TW_cW_oT^{-1}$ is a similarity transformation of $W_c W_o$, $\Lambda^2 = TW_cW_oT^{-1}$ must share the same eigenvalues as $W_c W_o$. Hence one possible choice for $\Lambda$ is choosing $\Lambda \coloneqq  \diag\left(\{\lambda_i(W_cW_o)\}_{i=1}^n\right)$ for each $i \in [n]$. This completes our proof.


\end{proof}

\section{Useful norm identities and inequalities}
\label{appendix:background_norm_bounds}

For the reader's reference, we collect several useful well-known norm identities and inequalities that recur frequently in our proofs. The reader may refer to \citep[Chapter 4]{zhou1996robust} for more details.

We begin with the following norm triangular inequalities
\begin{align*}
    &\norm*{\trG_1 + \trG_2}_{\mathcal{L}_2} \leq \norm*{\trG_1}_{\mathcal{L}_2} + \norm*{\trG_2}_{\mathcal{L}_2}, \quad \quad \forall \trG_1, \trG_2, \\
    &\norm*{\trG_1 + \trG_2}_{\mathcal{L}_\infty} \leq \norm*{\trG_1}_{\mathcal{L}_\infty} + \norm*{\trG_2}_{\mathcal{L}_\infty}, \quad \forall \trG_1, \trG_2.
\end{align*}
As a corollary, 
\begin{align*}
    &\norm*{\trG_1 + \trG_2}_{\mathcal{H}_2} \leq \norm*{\trG_1}_{\mathcal{H}_2} + \norm*{\trG_2}_{\mathcal{H}_2}, \quad \quad \forall \trG_1, \trG_2 \in \mathcal{R} \mathcal{H}_2, \\
    &\norm*{\trG_1 + \trG_2}_{\mathcal{H}_\infty} \leq \norm*{\trG_1}_{\mathcal{H}_\infty} + \norm*{\trG_2}_{\mathcal{H}_\infty}, \quad \forall \trG_1, \trG_2 \in \mathcal{R} \mathcal{H}_\infty
\end{align*}

The following result, bounding the $\mathcal{H}_\infty$ norm of the products of two transfer functions using a sub-multiplicative property, is used throughout our proofs.

\begin{lemma}
\label{lemma:G1_G_2_norm_ineq_H_L_mixed_inf}
Let $\trG_1 \in \mathcal{R}\mathcal{H}_\infty$. Suppose that $\trG_2$ has finite $\mathcal{L}_\infty$ norm, and $\trG_1$ and $\trG_2$ have compatible dimensions. If $\trG_1 \trG_2$ is stable, then
\begin{align*}
    \norm*{\trG_1 \trG_2}_{\mathcal{H}_\infty} \leq \norm*{\trG_1}_{\mathcal{H}_\infty} \norm*{\trG_2}_{\mathcal{L}_\infty}. 
\end{align*}
As a special case, when $\trG_2 \in \mathcal{RH}_\infty$ as well, we have 
\begin{align*}
    \norm*{\trG_1 \trG_2}_{\mathcal{H}_\infty} \leq \norm*{\trG_1}_{\mathcal{H}_\infty} \norm*{\trG_2}_{\mathcal{H}_\infty}.
\end{align*}
\end{lemma}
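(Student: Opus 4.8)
The plan is to reduce everything to a pointwise (frequency-by-frequency) submultiplicativity statement about the maximum singular value, and then to bridge between the $\mathcal{H}_\infty$ and $\mathcal{L}_\infty$ norms using the stability hypothesis. The crucial role of the assumption that $\trG_1 \trG_2$ is stable is precisely that it allows us to identify $\norm*{\trG_1 \trG_2}_{\mathcal{H}_\infty}$ with $\norm*{\trG_1 \trG_2}_{\mathcal{L}_\infty}$; without stability the $\mathcal{H}_\infty$ norm on the left-hand side need not even be finite, so the stability assumption is what makes the inequality well-posed rather than what makes it hard.

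First I would invoke the identity (cited from \cite[Chapter 4.3]{zhou1996robust} and recorded in the notation section) that a stable transfer function has equal $\mathcal{H}_\infty$ and $\mathcal{L}_\infty$ norms. Since $\trG_1 \trG_2$ is stable by hypothesis, this gives $\norm*{\trG_1 \trG_2}_{\mathcal{H}_\infty} = \norm*{\trG_1 \trG_2}_{\mathcal{L}_\infty}$, and since $\trG_1 \in \mathcal{RH}_\infty$, likewise $\norm*{\trG_1}_{\mathcal{H}_\infty} = \norm*{\trG_1}_{\mathcal{L}_\infty}$. This step converts the claim into a statement purely about $\mathcal{L}_\infty$ norms, which are defined for the (possibly unstable) factor $\trG_2$ as well.

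Next I would work entirely with the $\mathcal{L}_\infty$ norm, which by definition is $\norm*{\trG}_{\mathcal{L}_\infty} = \sup_{w \in \bbR} \sigma_{\max}(\trG(jw))$. Fixing an arbitrary frequency $w$, the submultiplicativity of the spectral norm for a product of finite-dimensional matrices gives $\sigma_{\max}(\trG_1(jw) \trG_2(jw)) \leq \sigma_{\max}(\trG_1(jw)) \, \sigma_{\max}(\trG_2(jw))$. Bounding each factor by its supremum over $w$ and then taking the supremum of the left-hand side yields
$$\norm*{\trG_1 \trG_2}_{\mathcal{L}_\infty} \leq \norm*{\trG_1}_{\mathcal{L}_\infty} \, \norm*{\trG_2}_{\mathcal{L}_\infty}.$$
Chaining this with the norm identities from the previous step gives the first claimed inequality $\norm*{\trG_1 \trG_2}_{\mathcal{H}_\infty} \leq \norm*{\trG_1}_{\mathcal{H}_\infty} \norm*{\trG_2}_{\mathcal{L}_\infty}$.

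Finally, for the special case in which $\trG_2 \in \mathcal{RH}_\infty$ as well, I would apply the same $\mathcal{H}_\infty$--$\mathcal{L}_\infty$ identity once more to replace $\norm*{\trG_2}_{\mathcal{L}_\infty}$ by $\norm*{\trG_2}_{\mathcal{H}_\infty}$, delivering the second inequality. The only point requiring care is not the algebra---the pointwise singular-value bound is elementary and ``sup of a product $\leq$ product of sups'' is immediate---but rather the bookkeeping of which norm is finite and why: the stability hypothesis is exactly what licenses passing from the transparent pointwise/$\mathcal{L}_\infty$ estimate back to the $\mathcal{H}_\infty$ norm appearing in the statement.
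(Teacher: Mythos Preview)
Your proposal is correct and follows essentially the same approach as the paper: use the stability hypothesis to equate the $\mathcal{H}_\infty$ and $\mathcal{L}_\infty$ norms of $\trG_1 \trG_2$ (and of $\trG_1$), apply the pointwise submultiplicativity $\sigma_{\max}(\trG_1(jw)\trG_2(jw)) \leq \sigma_{\max}(\trG_1(jw))\,\sigma_{\max}(\trG_2(jw))$, and then take suprema. The paper's proof is slightly terser but the logic is identical.
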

\begin{proof}
Suppose $\trG_1 \trG_2$ is stable. Then, by definition, its $\mathcal{H}_\infty$ norm is the same as its $\mathcal{L}_\infty$ norm. Hence, 
\begin{align*}
    \norm*{\trG_1 \trG_2}_{\mathcal{H}_\infty} = \norm*{\trG_1 \trG_2}_{\mathcal{L}_\infty} &= \sup_{w \in \bbR} \sigma_{\max}(\trG_1(jw)\trG_2(jw)) \\
    &\leq \sup_{w \in \bbR}  \sigma_{\max}(\trG_1(jw)) \sup_{w \in \bbR}  \sigma_{\max}(\trG_2(jw)) \\
    &\leq \norm*{\trG_1}_{\mathcal{H}_\infty} \norm*{\trG_2}_{\mathcal{L}_\infty}.
\end{align*}
The special case follows from the definition of the $\mathcal{H}_\infty$ norm.
\end{proof}

The next result, which bounds the $\mathcal{H}_2$ norm of the products of two transfer functions, is also quite useful. 

\begin{lemma}
\label{lemma:G1_G_2_norm_ineq_H_L_mixed_2}
Let $\trG_1 \in \mathcal{R}\mathcal{H}_\infty$. Suppose that $\trG_2$ has finite $\mathcal{L}_2$ norm, and $\trG_1$ and $\trG_2$ have compatible dimensions. If $\trG_1 \trG_2$ is stable, then
\begin{align}
    \label{eq:G1_G_2_norm_ineq_H_L_mixed_2_G1_finite_Hinf}
    \norm*{\trG_1 \trG_2}_{\mathcal{H}_2} \leq \norm*{\trG_1}_{\mathcal{H}_\infty} \norm*{\trG_2}_{\mathcal{L}_2}. 
\end{align}
In addition, when $\trG_1 \in \mathcal{R}\mathcal{H}_2$, and suppose that $\trG_2$ has finite $\mathcal{L}_\infty$ norm, where $\trG_1$ and $\trG_2$ have compatible dimensions. If $\trG_1 \trG_2$ is stable, then
\begin{align}
    \label{eq:G1_G_2_norm_ineq_H_L_mixed_2_G1_finite_H2}
    \norm*{\trG_1 \trG_2}_{\mathcal{H}_2} \leq \norm*{\trG_1}_{\mathcal{H}_2} \norm*{\trG_2}_{\mathcal{L}_\infty}. 
\end{align}
As a corollary, when $\trG_1, \trG_2$ are both in $\mathcal{R}\mathcal{H}_2$, we have 
\begin{align*}
    \norm*{\trG_1 \trG_2}_{\mathcal{H}_2} \leq \norm*{\trG_1}_{\mathcal{H}_\infty} \norm*{\trG_2}_{\mathcal{H}_2}, \quad     \norm*{\trG_1 \trG_2}_{\mathcal{H}_2} \leq \norm*{\trG_1}_{\mathcal{H}_2} \norm*{\trG_2}_{\mathcal{H}_\infty}.
\end{align*}
\end{lemma}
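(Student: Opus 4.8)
The plan is to mirror the proof of \Cref{lemma:G1_G_2_norm_ineq_H_L_mixed_inf}. Since $\trG_1 \trG_2$ is assumed stable, its $\mathcal{H}_2$ norm coincides with its $\mathcal{L}_2$ norm, so it suffices to bound the $\mathcal{L}_2$ norm of the product, which is a frequency integral whose integrand I can control pointwise. Writing $\trG(jw)^* \coloneqq \trG(-jw)^\tr$ for the value on the imaginary axis, the definition gives
\[
\norm*{\trG_1 \trG_2}_{\mathcal{L}_2}^2 = \frac{1}{2\pi}\int_{-\infty}^\infty \trace\!\left(\trG_2(jw)^* \trG_1(jw)^* \trG_1(jw) \trG_2(jw)\right) dw,
\]
so the whole argument reduces to a frequency-wise matrix inequality on the integrand, combined with the elementary monotonicity fact that $\trace(M^* A M) \leq \trace(M^* B M)$ whenever $A \preceq B$ (valid since $M^*(B-A)M \succeq 0$ has nonnegative trace).

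For the first bound \cref{eq:G1_G_2_norm_ineq_H_L_mixed_2_G1_finite_Hinf}, I would use that at each frequency $\trG_1(jw)^* \trG_1(jw) \preceq \sigma_{\max}(\trG_1(jw))^2 I$, and apply the monotonicity fact with $M = \trG_2(jw)$ to obtain
\[
\trace\!\left(\trG_2^* \trG_1^* \trG_1 \trG_2\right) \leq \sigma_{\max}(\trG_1(jw))^2 \,\trace(\trG_2(jw)^* \trG_2(jw)).
\]
Bounding $\sigma_{\max}(\trG_1(jw))^2 \leq \norm*{\trG_1}_{\mathcal{L}_\infty}^2 = \norm*{\trG_1}_{\mathcal{H}_\infty}^2$ (the equality holding because $\trG_1 \in \mathcal{R}\mathcal{H}_\infty$), pulling this constant out of the integral, and recognizing the remaining integral as $\norm*{\trG_2}_{\mathcal{L}_2}^2$ gives the claim after a square root. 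For the second bound \cref{eq:G1_G_2_norm_ineq_H_L_mixed_2_G1_finite_H2}, I would first use cyclicity of the trace to rewrite the integrand as $\trace(\trG_1 \trG_2 \trG_2^* \trG_1^*)$, then apply $\trG_2(jw) \trG_2(jw)^* \preceq \sigma_{\max}(\trG_2(jw))^2 I$ together with the same monotonicity fact, now taking $M = \trG_1(jw)^*$, to get $\trace(\trG_1 \trG_2 \trG_2^* \trG_1^*) \leq \sigma_{\max}(\trG_2(jw))^2 \,\trace(\trG_1(jw)^* \trG_1(jw))$. Bounding $\sigma_{\max}(\trG_2(jw))^2 \leq \norm*{\trG_2}_{\mathcal{L}_\infty}^2$ and using $\norm*{\trG_1}_{\mathcal{L}_2} = \norm*{\trG_1}_{\mathcal{H}_2}$ (since $\trG_1 \in \mathcal{R}\mathcal{H}_2$) finishes that case. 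The corollary is then immediate: any rational $\mathcal{R}\mathcal{H}_2$ function is strictly proper and stable, hence also lies in $\mathcal{R}\mathcal{H}_\infty$ with $\mathcal{H}_\infty$ norm equal to its $\mathcal{L}_\infty$ norm, so the two displayed inequalities follow by specializing the two cases just proved.

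The argument is essentially routine; the only points requiring care are the trace-monotonicity step and the bookkeeping of which factor is treated as $\mathcal{L}_\infty$-bounded versus $\mathcal{L}_2$-bounded in each case. The main (and quite minor) obstacle is justifying the passage from the pointwise inequality to the integrated one, which is immediate here because every integrand is the trace of a positive semidefinite matrix and is therefore nonnegative and measurable, so the frequency-wise bound integrates directly.
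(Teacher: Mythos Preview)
Your proposal is correct and follows essentially the same route as the paper: identify the $\mathcal{H}_2$ norm with the $\mathcal{L}_2$ norm via stability, bound the integrand pointwise using $\trG_1(jw)^*\trG_1(jw)\preceq \sigma_{\max}(\trG_1(jw))^2 I$ (respectively with the roles of $\trG_1,\trG_2$ swapped via cyclicity of the trace), and integrate. The paper's proof is slightly terser---it simply writes ``analogous calculations'' for \cref{eq:G1_G_2_norm_ineq_H_L_mixed_2_G1_finite_H2} and the corollary---but your explicit use of trace monotonicity and cyclicity is exactly what underlies that step.
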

\begin{proof}
Suppose $\trG_1 \trG_2$ is stable. Then, by definition, its $\mathcal{H}_2$ norm is the same as its $\mathcal{L}_2$ norm. Suppose that $\trG_1 \in \mathcal{R} \mathcal{H}_\infty$ and that $\trG_2$ has finite $\mathcal{L}_2$ norm. Then, 
\begin{align*}
    \norm*{\trG_1 \trG_2}_{\mathcal{H}_2}^2 = \norm*{\trG_1 \trG_2}_{\mathcal{L}_2}^2 &= \int_{0}^\infty \trace \left((\trG_1(jw) \trG_2(jw))^* (\trG_1(jw)\trG_2(jw)) \right) dw \\
    &= \int_{0}^\infty \trace \left(\trG_2(jw)^* \trG_1(jw)^* \trG_1(jw)\trG_2(jw) \right) dw \\
    &\leq \int_{0}^\infty \sigma_{\max}(\trG_1^*(jw) \trG_1(jw)) \trace \left(\trG_2(jw)^*\trG_2(jw) \right) dw \\
    &\leq \norm*{\trG_1}_{\mathcal{L}_\infty}^2 \norm*{\trG_2}_{\mathcal{L}_2}^2 = \norm*{\trG_1}_{\mathcal{H}_\infty}^2 \norm*{\trG_2}_{\mathcal{L}_2}^2
\end{align*}
Using analogous calculations we may derive the other inequalities in our result as well. Our corollary follows from the definition of the $\mathcal{H}_\infty$ and $\mathcal{H}_2$ norm.
\end{proof}

We end with the following inequality which follows from the small-gain theorem \citep[Theorem 9.1]{zhou1996robust}. 

\begin{lemma} \label{lemma:small-gain-type-inequality}
Suppose $\trG_1 \trG_2 \in \mathcal{R} \mathcal{H}_\infty$ and $\norm*{\trG_1}_{\mathcal{H}_\infty} \norm*{\trG_2}_{\mathcal{L}_\infty} < 1$. It follows that 
\begin{align*}
    \norm*{(I - \trG_1 \trG_2)^{-1}}_{\mathcal{H}_\infty} \leq \frac{1}{1 - \norm*{\trG_1}_{\mathcal{H}_\infty} \norm*{\trG_2}_{\mathcal{L}_\infty}}.
\end{align*}
As a corollary, if $\trG_2 \in \mathcal{R}\mathcal{H}_\infty$ as well, then 
\begin{align*}
    \norm*{(I - \trG_1 \trG_2)^{-1}}_{\mathcal{H}_\infty} \leq \frac{1}{1 - \norm*{\trG_1}_{\mathcal{H}_\infty} \norm*{\trG_2}_{\mathcal{H}_\infty}}.
\end{align*}
\end{lemma}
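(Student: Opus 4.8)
The plan is to reduce the operator-norm bound to a pointwise-in-frequency estimate, after first securing that $(I - \trG_1 \trG_2)^{-1}$ is actually stable so that its $\mathcal{H}_\infty$ norm is well-defined and computable as a supremum over the imaginary axis. The cited small-gain theorem supplies exactly this stability conclusion, and the remaining work is a routine Neumann-series argument at each frequency.

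First I would verify that the loop gain $\trG_1 \trG_2$ meets the hypotheses of the small-gain theorem. Since $\trG_1 \in \mathcal{R}\mathcal{H}_\infty$, $\trG_2$ has finite $\mathcal{L}_\infty$ norm, and $\trG_1 \trG_2$ is stable by assumption, \Cref{lemma:G1_G_2_norm_ineq_H_L_mixed_inf} gives $\norm*{\trG_1 \trG_2}_{\mathcal{H}_\infty} \leq \norm*{\trG_1}_{\mathcal{H}_\infty} \norm*{\trG_2}_{\mathcal{L}_\infty} =: \gamma < 1$. Thus $\trG_1 \trG_2 \in \mathcal{R}\mathcal{H}_\infty$ with $\mathcal{H}_\infty$ norm strictly below $1$, so the small-gain theorem \cite[Theorem 9.1]{zhou1996robust} applies and yields that $(I - \trG_1 \trG_2)^{-1}$ exists and lies in $\mathcal{R}\mathcal{H}_\infty$.

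Having established stability, I would exploit that for a stable transfer function the $\mathcal{H}_\infty$ norm coincides with the $\mathcal{L}_\infty$ norm, so that $\norm*{(I - \trG_1 \trG_2)^{-1}}_{\mathcal{H}_\infty} = \sup_{w \in \bbR} \sigma_{\max}\big((I - \trG_1(jw)\trG_2(jw))^{-1}\big)$. Fixing a frequency $w$ and writing $M := \trG_1(jw)\trG_2(jw)$, submultiplicativity of the maximum singular value gives $\sigma_{\max}(M) \leq \sigma_{\max}(\trG_1(jw))\,\sigma_{\max}(\trG_2(jw)) \leq \gamma < 1$. The Neumann series then converges, $(I - M)^{-1} = \sum_{k \geq 0} M^k$, whence $\sigma_{\max}((I-M)^{-1}) \leq \sum_{k \geq 0} \sigma_{\max}(M)^k = (1 - \sigma_{\max}(M))^{-1} \leq (1-\gamma)^{-1}$. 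Taking the supremum over $w$ delivers the claimed bound, and the corollary is immediate since when $\trG_2 \in \mathcal{R}\mathcal{H}_\infty$ its $\mathcal{L}_\infty$ norm equals its $\mathcal{H}_\infty$ norm, so $\gamma = \norm*{\trG_1}_{\mathcal{H}_\infty}\norm*{\trG_2}_{\mathcal{H}_\infty}$.

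I do not anticipate a genuine obstacle; the only points demanding care are the bookkeeping between the $\mathcal{L}_\infty$ and $\mathcal{H}_\infty$ norms (recall $\trG_2$ need not itself be stable, only the product $\trG_1 \trG_2$ is), and confirming the hypotheses of the cited small-gain theorem — namely that the loop gain is stable with $\mathcal{H}_\infty$ norm below one — which is precisely what \Cref{lemma:G1_G_2_norm_ineq_H_L_mixed_inf} guarantees.
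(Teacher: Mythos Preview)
Your proposal is correct and follows the route the paper indicates: the paper does not spell out a proof but simply records that the inequality ``follows from the small-gain theorem \cite[Theorem 9.1]{zhou1996robust},'' and your argument makes this precise by first invoking \Cref{lemma:G1_G_2_norm_ineq_H_L_mixed_inf} to put the loop gain in $\mathcal{RH}_\infty$ with norm below one, then applying small-gain for stability and a pointwise Neumann-series estimate for the quantitative bound. There is nothing to add.
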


\section{Technical proofs}

\subsection{Proof of \cref{lemma:LQG-cost}}
\label{appendix:proof_of_LQG-cost}

\begin{proof}[Proof of \cref{lemma:LQG-cost}]
It follows from the definition of the LQG cost in \cref{eq:LQG-cost} that we have
$$J(\trK) = \lim_{t \to \infty} \bbE\left[z(t)^\top z(t) \right], \quad z(t) \coloneqq \begin{bmatrix}
\tilde{y}(t) \\
u(t)
\end{bmatrix}.$$
Denote $\tilde{w}(t) \coloneqq \begin{bmatrix}
w(t) &
v(t)
\end{bmatrix}^\top.$ Denote
\begin{align*}
    \mathbf{T} \coloneqq \begin{bmatrix}
    (I - \mathbf{G}\mathbf{K})^{-1} \mathbf{G} & (I - \mathbf{G}\mathbf{K})^{-1} \mathbf{G} \mathbf{K} \\
    \mathbf{K}(I - \mathbf{G}\mathbf{K})^{-1} \mathbf{G} &  \mathbf{K} (I - \mathbf{G}\mathbf{K})^{-1}
    \end{bmatrix}.
\end{align*}
Then, from \cref{eq:y_tilde_u_w_v_freq_rship}, we have that $\mathbf{z} = \mathbf{T} \mathbf{\tilde{w}}$. Equivalently, there exists some (proper) state-space realization $(A_{\mathbf{T}},B_{\mathbf{T}}, C_{\mathbf{T}},0)$ of $\mathbf{T}$, such that we have the dynamical representation
\begin{align*}
    &\tilde{\xi}(t) = A_{\mathbf{T}} \tilde{\xi}(t) + B_{\mathbf{T}}\tilde{w}(t) \\
    &z(t) = C_{\mathbf{T}} \tilde{\xi}(t).
\end{align*}
This implies in particular that
\begin{align*}
    z(t) = C_{\mathbf{T}} \int_{\tau=0}^t e^{A_{\mathbf{T}}(t-\tau)}B_{\mathbf{T}} \tilde{w}(\tau) d\tau.
\end{align*}
Thus, assuming that $w$ and $v$ are white noise terms (so $\tilde{w}$ is also a white noise term), we have
\begin{align*}
    J(\trK) &= \lim_{t \to \infty} \bbE\left[z(t)^\top z(t) \right] \\
    &= \lim_{t \to \infty} \bbE\left[\left(C_{\mathbf{T}} \int_{\tau=0}^t e^{A_{\mathbf{T}}(t-\tau)}B_{\mathbf{T}} \tilde{w}(\tau) d\tau\right)^\top \left(C_{\mathbf{T}} \int_{\tau=0}^t e^{A_{\mathbf{T}}(t-\tau)}B_{\mathbf{T}} \tilde{w}(\tau) d\tau\right) \right] \\
    &= \bbE\left[\trace \left( \int_{0}^\infty \left(C_{\mathbf{T}}e^{A_{\mathbf{T}}t} B_{\mathbf{T}} \right)^\top \left(C_{\mathbf{T}}e^{A_{\mathbf{T}}t} B_{\mathbf{T}} \right)dt \right) \right] \\
    &= \frac{1}{2\pi} \int_{-\infty}^\infty \trace \left(\mathbf{T}^*(j\omega) \mathbf{T}(j\omega) \right) d\omega \\
    &= \norm*{\mathbf{T}}_{\mathcal{H}_2}^2
\end{align*}
where the second-to-last equality holds by Parseval's theorem, and the final equality follows from definition of the $\mathcal{H}_2$ norm. This completes our proof.
\end{proof}

\subsection{Proof of \cref{lemma:K_perturbed_still_stable_conditions_old}} \label{appendix:lemma_3_proof}
Although \cref{lemma:K_perturbed_still_stable_conditions_old} is widely used, we cannot find a proof easily in the literature. 
We provide here a proof of \cref{lemma:K_perturbed_still_stable_conditions_old}, which relies on the MIMO version of the Nyquist stability theorem \cite[Theorem 5.8]{zhou1996robust} and the discussion in \cite[Section IV]{doyle1981multivariable}. 

Throughout our proof below, given a transfer function $\mathbf{F}$, we use $n_z(\mathbf{F})$ to denote the number of zeros of $\mathbf{F}$ in the open right-half complex plane, and $n_p(\mathbf{F})$ to denote the number of poles of $\mathbf{F}$ in the open right-half complex plane.

\begin{proof}[Proof of \cref{lemma:K_perturbed_still_stable_conditions_old}]
Recall that $\trDelta := \trK - \trK_r$. Without loss of generality, it suffices for us to show the result in the case that $\norm*{(I - \trG \trK)^{-1} \trG \trDelta}_{\mathcal{L}_\infty} < 1$; the other case, when $\norm*{\trDelta \trG(I - \trK \trG)^{-1}}_{\mathcal{L}_\infty} < 1$, follows by symmetry of $\trK$ and $\trG$ in the closed-loop transfer matrix.

By \citep[Theorem 5.7]{zhou1996robust}, to prove that $\trK_r$ internally stabilizes $\trG$, it suffices to show two items. 
\begin{enumerate}
    \item $n_p(\trG) + n_p(\trK_r) = n_p(\trG \trK_r)$, i.e., no unstable pole-zero cancellation between $\trG$ and $\trK_r$. 
    \item $(I - \trG \trK_r)^{-1}$ is stable, i.e. $\det((I - \trG  \trK)^{-1})$ has all its poles in the open left-half complex plane.
\end{enumerate}

We will first show that $W(\det(I - \trG \trK_r)) = W(\det(I - \trG \trK))$, where $W(\cdot)$ denotes the winding number (around the origin) of a trajectory in $\mathbb{C}$. 
By homotopy invariance of the winding number, if $\det((I - \trG \trK + \ep \trG(\trK - \trK_r))(s)) \neq 0$ for all $s \in D$ and all $0 \leq \ep \leq 1$, where $D$ denotes the Nyquist contour in the right-half-plane\footnote{This is a contour comprising a path travelling up the $iw$ axis, from $0-i\infty$ to $0 + i \infty$, and another semicircular arc, with radius $r \to \infty$, that travels clockwise from $0 + i\infty$ to $0 - i\infty$.}, it follows that $W(\det(I - \trG \trK_r)) = W(\det(I - \trG \trK))$. We now prove that $\forall 0 \leq \epsilon \leq 1$, we have $\det((I - \trG \trK + \ep \trG(\trK - \trK_r))(s)) \neq 0$ for all $s \in D$. 

\begin{itemize}
    \item For the semicircular arc at infinity of $D$, it is not hard to see  that $\det\left((I - \trG \trK + \ep \trG(\trK - \trK_r))\right) \not\equiv 0$ for any $0 \leq \ep \leq 1$. This is because the determinant of any transfer function is a complex polynomial and thus has finitely many roots. Therefore it must be non-vanishing as we approach infinity. 
    \item


We here show that $\det\left((I - \trG \trK + \ep\trG(\trK - \trK_r))(i\omega)\right) \neq 0$ for any $\omega \in \bbR$. This is equivalent to showing
$$\underline{\sigma}(I - \trG \trK + \ep\trG(\trK - \trK_r)) > 0,$$
where  for any transfer function $\mathbf{F}$, we denote $\underline{\sigma}(\mathbf{F}) := \min_{\omega \in \mathbb{R}}\, \sqrt{\lambda_{\min}(\mathbf{F}(-i\omega)^\tr \mathbf{F}(i\omega))},$ i.e., the minimum singular value of $\mathbf{F}(s)$ over the imaginary axis.

Since $\norm*{ (I - \trG \trK)^{-1} \trG (\trK- \trK_r) }_{\mathcal{L}_\infty} < 1$, it follows that for any $ 0 \leq \ep  \leq 1$, 
\begin{align*}
    \underline{\sigma}(I - \trG \trK + \ep \trG(\trK - \trK_r)) &= \underline{\sigma}((I - \trG \trK)(I + \ep (I - \trG \trK)^{-1} \trG (\trK - \trK_r))) \\
    &\labelrel\geq{eq:bound1} \underline{\sigma}((I - \trG \trK)) (1 - \norm*{(I - \trG \trK)^{-1} \trG (\trK - \trK_r)}_{\mathcal{L}_\infty}) \\
    &\labelrel>{eq:use_K_stabilizes_G_and_l_infty_bound} 0.
\end{align*}
To derive \eqref{eq:bound1}, we used the fact that $\sigma_{\min}(AB) \geq \sigma_{\min}(A)\sigma_{\min}(B)$ and $\sigma_{\min}(I + A) \geq 1 - \sigma_{\max}(A)$ for compatible matrices. 
To derive \eqref{eq:use_K_stabilizes_G_and_l_infty_bound}, we used the fact that 
$1 - \norm*{(I - \trG \trK)^{-1} \trG (\trK - \trK_r)}_{\mathcal{L}_\infty} > 0,$  
and the fact that $\trK$ internally stabilizes $\trG$ (which implies that $\det(I - \trG \trK)$ has all its zeros in the open left-half plane), indicating that $\underline{\sigma}((I - \trG \trK)) > 0$ over the imaginary axis.
\end{itemize}

We thus conclude that 
\begin{align}
\label{eq:winding_numbers_equal}
    W(\det(I - \trG \trK)) = W(\det(I - \trG \trK_r))
\end{align}
By the multivariate Nyquist criterion, we know that for any square transfer function $\mathbf{F}$,
$$W(\det(I - \mathbf{F})) = n_z(I - \mathbf{F}) - n_p(\mathbf{F}).$$


Since $\trK$ internally stabilizes $\trG$, we know that $n_z(I - \trG \trK) = 0$. Thus, 
we have $W(\det(I - \trG \trK)) = - n_p(\trG \trK).$
Combining this with \cref{eq:winding_numbers_equal}, we then have that
\begin{align}
\label{eq:winding_number_equal_n_p}
    W(\det(I - \trG \trK_r)) 
    = - n_p(\trG \trK).
\end{align}
Since $n_z(I - \trG \trK_r) \geq 0$, we also know that  
\begin{align}
\label{eq:nyquist_GKr}
W(\det(I - \trG \trK_r)) = n_z(I - \trG \trK_r) - n_p(\trG \trK_r) \geq - n_p(\trG \trK_r)
\end{align}
Combining \cref{eq:winding_number_equal_n_p} and \cref{eq:nyquist_GKr} leads to
\begin{align}
\label{eq:n_p(GK)_vs_n_p(GKr)}
    n_p(\trG \trK) \leq n_p(\trG \trK_r). 
\end{align}
Since $\trK$ internally stabilizes $\trG$, $n_p(\trG \trK) = n_p(\trG) + n_p(\trK)$. Meanwhile, by definition we have 
$$n_p(\trG \trK_r) \leq n_p(\trG) + n_p(\trK_r) = n_p(\trG) + n_p(\trK) = n_p(\trG \trK),$$
where we used the assumption that $n_p(\trK) = n_p(\trK_r)$. Continuing from \cref{eq:n_p(GK)_vs_n_p(GKr)}, we then conclude that
$$n_p(\trG \trK_r) = n_p(\trG \trK) = n_p(\trG) + n_p(\trK) =  n_p(\trG) + n_p(\trK_r).$$
Thus, there is no unstable pole-zero cancellation between $\trG$ and $\trK_r$. 

Meanwhile, we also have that
\begin{align*}
    n_z(I - \trG \trK_r) &= W(\det(I - \trG \trK_r)) + n_p(\trG \trK_r) \\
    &= W(\det(I - \trG \trK)) + n_p(\trG \trK) = 0.
\end{align*}
So, $(I - \trG \trK_r)^{-1}$ has no pole in the closed RHP. Since we already showed earlier that for any $0 \leq \ep \leq 1,$
$$\det(I - \trG \trK + \ep \trG(\trK - \trK_r)) \neq 0,$$
since $(I - \trG \trK)^{-1}$ has no pole along the imaginary axis, it follows that $(I - \trG \trK_r)^{-1}$ has no pole on the imaginary axis as well. Thus, $(I - \trG \trK_r)^{-1}$ is stable. This concludes our proof.
\end{proof}

\subsection{Proof of the LQG bound in \cref{eq:LQG-unstable-SISO}} \label{appendix:proof-LQG-bound}

We here prove the results on the change in LQG performance in \cref{eq:LQG-unstable-SISO}. We have proved that the conditions in \cref{theorem:truncated_K_still_stable_conditions} are all satisfied and thus $\trK_r$ internally stabilizes $\trG$.

From \Cref{lemma:LQG-cost}, we know that 
\begin{equation*}
    J(\mathbf{K}_r) \!=\! \norm*{(I \!-\! \mathbf{G}\mathbf{K}_r)^{-1}\mathbf{G}}_{\mathcal{H}_2}^2 + \norm*{(I \!-\! \mathbf{G}\mathbf{K}_r)^{-1}\mathbf{G} \trK_r}_{\mathcal{H}_2}^2 + \norm*{\trK_r(I \!-\! \mathbf{G}\mathbf{K}_r)^{-1}\mathbf{G}}_{\mathcal{H}_2}^2 + \norm*{\trK_r(I \!-\! \mathbf{G}\mathbf{K}_r)^{-1}}_{\mathcal{H}_2}^2. 
\end{equation*}
We proceed to upper bound the $\mathcal{H}_2$ norm of each sub-block 
in terms of norms of the sub-blocks corresponding to $\trK$. 


    Observe that by \cref{eq:I-GK_r_inv_G_expression}, we have that 
    \begin{align}
        \norm*{(I - \mathbf{G}\mathbf{K}_r)^{-1}\mathbf{G}}_{\mathcal{H}_2} &= \norm*{(I - \mathbf{X} \mathbf{\Delta})^{-1} (I - \mathbf{G}\mathbf{K})^{-1}\mathbf{G}}_{\mathcal{H}_2} \nonumber \\
        &\leq \norm*{(1 - \trX \trDelta)^{-1}}_{\mathcal{H}_{\infty}} \left(\norm*{(I - \mathbf{G}\mathbf{K})^{-1}\mathbf{G}}_{\mathcal{H}_2} \right). \label{eq:first_K_r_sub-block_H2_diff-new}
    \end{align}
    Next, observe that
    \begin{align*}
       &\norm*{(I - \mathbf{G}\mathbf{K}_r)^{-1}\mathbf{G} \mathbf{K}_r}_{\mathcal{H}_2} = \norm*{(I - \mathbf{G}\mathbf{K}_r)^{-1}\mathbf{G} (\mathbf{K} + \mathbf{\Delta})}_{\mathcal{H}_2} \nonumber \\
       \leq \,  &\norm*{(I - \mathbf{G}\mathbf{K}_r)^{-1}\mathbf{G} \mathbf{K}}_{\mathcal{H}_2} + \norm*{(I - \mathbf{G}\mathbf{K}_r)^{-1}\mathbf{G} \mathbf{\Delta}}_{\mathcal{H}_2} \nonumber \\
       \leq \, &\norm*{(1 - \trX \trDelta)^{-1}}_{\mathcal{H}_{\infty}} \left(\norm*{(I - \mathbf{G}\mathbf{K})^{-1}\mathbf{G} \mathbf{K}}_{\mathcal{H}_2} \right) + \norm*{(1 - \trX \trDelta)^{-1}}_{\mathcal{H}_{\infty}}  \norm*{\mathbf{\Delta}}_{L_{\infty}} \left(\norm*{(I - \mathbf{G}\mathbf{K})^{-1}\mathbf{G}}_{\mathcal{H}_2} \right), 
    \end{align*}
    where we used \cref{eq:first_K_r_sub-block_H2_diff} and an analogous calculation to \cref{eq:I-GK_r_inv_G_expression} (but keeping the $\mathbf{K}$ on the right) to derive the last inequality. We also used \cref{eq:G1_G_2_norm_ineq_H_L_mixed_2_G1_finite_H2} in \cref{appendix:background_norm_bounds} to bound the $\mathcal{H}_2$ norm of the term $$\norm*{(I - \trG \trK_r)^{-1} \trG \trDelta}_{\mathcal{H}_2} \leq \norm*{\trDelta}_{\mathcal{L}_\infty} \norm*{(I - \trG \trK_r)^{-1} \trG}_{\mathcal{H}_2}.$$
    
    We next consider the term $\mathbf{K}_r (I - \mathbf{G} \mathbf{K}_r)^{-1}$. From \cref{eq:Kr(I-GKr)_inv_expression} in the proof of \cref{theorem:truncated_K_still_stable_conditions}, we have 
    \begin{align*}
        \mathbf{K}_r (I - \mathbf{G} \mathbf{K}_r)^{-1} = \trK(I - \mathbf{G}\trK)^{-1} + \trK \trX  (I - \trDelta \mathbf{X})^{-1} \mathbf{\Delta}  (I - \mathbf{G} \mathbf{K})^{-1} + (I - \trDelta\mathbf{X} )^{-1}\trDelta (I - \mathbf{G}\trK)^{-1}.
    \end{align*}
  %
  %
    Thus, we have that
    \begin{align*}
        &\ \norm*{\mathbf{K}_r (I - \mathbf{G} \mathbf{K}_r)^{-1}}_{\mathcal{H}_2} \\ 
        \leq & \ \norm*{\trK(I - \mathbf{G}\trK)^{-1}}_{\mathcal{H}_2} + \norm*{(1 - \mathbf{X} \trDelta)^{-1}}_{\mathcal{H}_\infty}\norm*{(I - \trG \trK)^{-1}}_{\mathcal{H}_\infty} \left(\norm*{\trDelta}_{\mathcal{L}_\infty} \norm*{\trK \mathbf{X}}_{\mathcal{H}_2}  +  \norm*{\trDelta}_{\mathcal{L}_2}\right) 
    \end{align*}
    where the inequality uses \cref{eq:G1_G_2_norm_ineq_H_L_mixed_2_G1_finite_Hinf} in \cref{appendix:background_norm_bounds}.
    From \cref{eq:Kr(I-GKr)_inv_expression}, we know that 
    \begin{align*}
        \mathbf{K}_r (I - \mathbf{G} \mathbf{K}_r)^{-1} \trG = \trK(I - \mathbf{G}\trK)^{-1}\trG + \trK \trX  (I - \trDelta \mathbf{X})^{-1} \mathbf{\Delta}  (I - \mathbf{G} \mathbf{K})^{-1}\trG + (I - \trDelta\mathbf{X} )^{-1}\trDelta (I - \mathbf{G}\trK)^{-1}\trG.
    \end{align*}
    
    
By a similar analysis to the preceding calculation for $\norm*{\trK_r (I - \trG \trK_r)^{-1}}_{\mathcal{H}_2}$, we have
    \begin{align*}
        &\norm*{\mathbf{K}_r (I - \mathbf{G} \mathbf{K}_r)^{-1}\trG}_{\mathcal{H}_2} \\ 
        \leq &\norm*{\trK(I - \mathbf{G}\trK)^{-1}}_{\mathcal{H}_2} + \norm*{(1 - \mathbf{X} \trDelta)^{-1}}_{\mathcal{H}_\infty}\norm*{(I - \trG \trK)^{-1} \trG}_{\mathcal{H}_2} \left(\norm*{\trDelta}_{\mathcal{L}_\infty} \norm*{\trK \mathbf{X}}_{\mathcal{H}_\infty}  +  \norm*{\trDelta}_{\mathcal{L}_\infty}\right) ,
    \end{align*}
    where the inequality uses \cref{eq:G1_G_2_norm_ineq_H_L_mixed_2_G1_finite_Hinf} in \cref{appendix:background_norm_bounds}.
Combining the results above, we have that 
\begin{align*}
J(\mathbf{K}_r) &= \norm*{(I - \mathbf{G}\mathbf{K}_r)^{-1}\mathbf{G}}_{\mathcal{H}_2}^2 + \norm*{(I - \mathbf{G}\mathbf{K}_r)^{-1}\mathbf{G} \trK_r}_{\mathcal{H}_2}^2 + \norm*{\trK_r(I - \mathbf{G}\mathbf{K}_r)^{-1}\mathbf{G}}_{\mathcal{H}_2}^2 + \norm*{\trK_r(I - \mathbf{G}\mathbf{K}_r)^{-1}}_{\mathcal{H}_2}^2 \\
&\leq  \norm*{(1 - \trX \trDelta)^{-1}}_{\mathcal{H}_\infty}^2 (J(\mathbf{K}) + S_1 + S_2).
\end{align*}
where (recalling the notation that $\mathbf{Y} = (I - \mathbf{G}\trK)^{-1}, \mathbf{X} = (I - \trG \trK)^{-1}$),
{\small 
\begin{align*}
    &S_1 = 2\!\left(\norm*{\mathbf{\Delta}}_{L_\infty} \norm*{\mathbf{X}}_{\mathcal{H}_2}\norm*{\mathbf{X} \mathbf{K}}_{\mathcal{H}_2}\!+\!\norm*{\trK \mathbf{Y}}_{\mathcal{H}_2}\! \left(\norm*{\mathbf{\Delta}}_{L_2}\norm*{\mathbf{Y}}_{\mathcal{H}_\infty}\!\!+\!\norm*{\mathbf{\Delta}}_{L_\infty}\!\left(\norm*{\mathbf{Y}}_{\mathcal{H}_2}\! +\!\norm*{\trK \mathbf{X}}_{\mathcal{H}_2}\norm*{\mathbf{Y}}_{\mathcal{H}_\infty}\! \!+\! \norm*{\trK \mathbf{X}}_{\mathcal{H}_\infty}\norm*{\mathbf{X}}_{\mathcal{H}_2} \!\right)\!\right)\!\right)\!, \\
&S_2 = \norm*{\mathbf{\Delta}}_{L_\infty}^2 \norm*{\mathbf{X}}_{\mathcal{H}_2}^2 
+ \left(\norm*{\mathbf{Y}}_{\mathcal{H}_\infty} \left(\norm*{\trDelta}_{\mathcal{L}_\infty} \norm*{\trK \mathbf{X}}_{\mathcal{H}_2}  +  \norm*{\trDelta}_{\mathcal{L}_2}\right) \right)^2
    +  \norm*{\mathbf{X}}_{\mathcal{H}_2}^2\left(\norm*{\trDelta}_{\mathcal{L}_\infty} \norm*{\trK \mathbf{X}}_{\mathcal{H}_\infty}  +  \norm*{\trDelta}_{\mathcal{L}_\infty}\right)^2.
\end{align*}
}

\section{More numerical examples} 
\label{appendix:simulation}

\begin{figure}[t]
\centering
\begin{subfigure}{.5\textwidth}
  \centering
  \includegraphics[width=.8\linewidth]{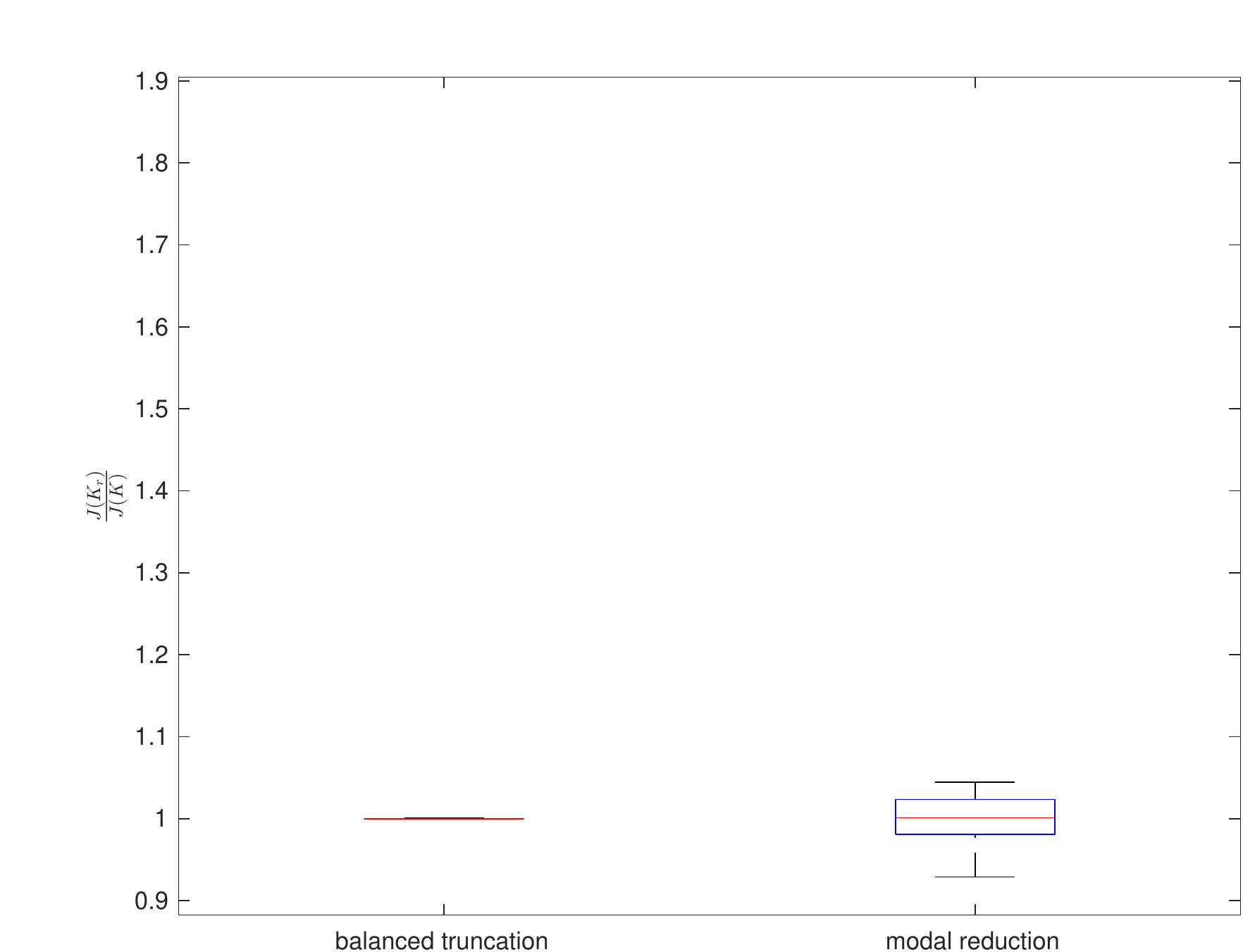}
  \caption{Box plot of LQG cost ratio $\frac{J(K_r)}{J(K)}$}
  \label{fig:bt_mt_comparison_boxplot_cost}
\end{subfigure}%
\begin{subfigure}{.5\textwidth}
  \centering
  \includegraphics[width=.8\linewidth]{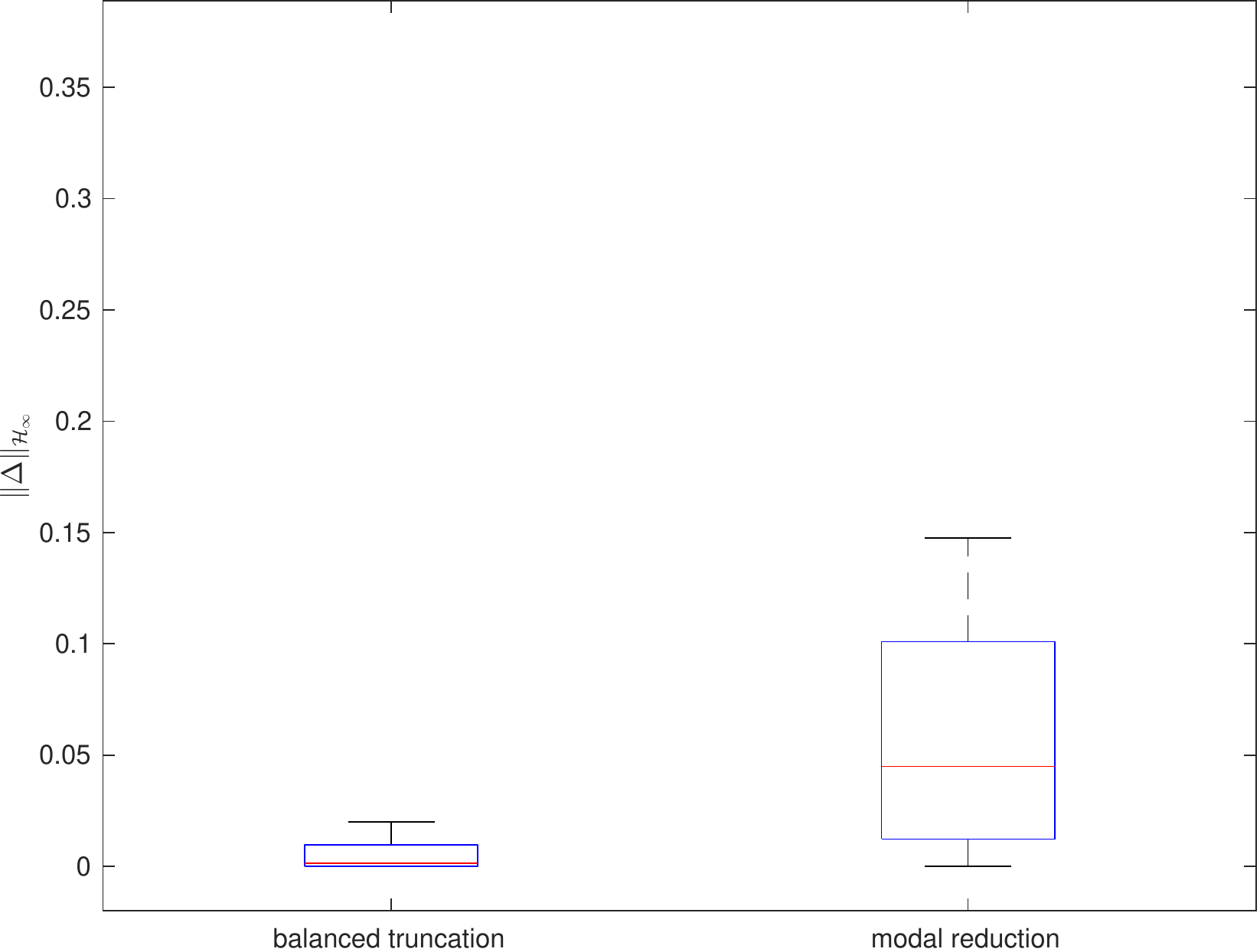}
  \caption{Boxplot of $\norm*{\Delta}_{\mathcal{H}_\infty}$}
  \label{fig:bt_mt_comparison_boxplot_delta}
\end{subfigure}
\caption{Boxplot of LQG cost ratio $\frac{J(K_r)}{J(K)}$ and size of truncated component $\norm*{\Delta}_{\mathcal{H}_\infty}$ for balanced truncation and modal truncation approaches, across 30 trials. The lower and upper blue lines indicate the 25th percentile (Q1) and 75th percentile (Q3) respectively. The lower and upper (shorter) black bars indicate the minimum and maximum non-outlier values, where an outlier is any value that lies below Q1 - 1.5 IQR or Q3 + 1.5 IQR, where IQR refers to the interquartile range, i.e. Q3 - Q1. Note that the relative to the boxplots for modal truncation, the boxplots for balanced truncation are significantly narrower and concentrates about 1.}
    \label{f}
\label{fig:test}
\end{figure}



\textbf{Comparing balanced truncation to modal reduction.} We provide here more comparison between the performance of balanced truncation and that of modal truncation. To do so, we generate a random stable and minimal SISO system of order $3$ which we denote as $\trK_<$, and augment it with an unstable mode $\trK_{\geq}$, which has the form
$$\trK_{\geq} = \left[\begin{array}{c|c}
    0.2 & 0.5 \\
    \hline
    0.5 & 0
\end{array} \right].$$
We define the augmented SISO system $\trK$ as by setting $\trK = \trK_< + \trK_\geq$; note its order is 4. We then compute a stabilizing controller for $\trK$, which we call $\trG$. Treating $\trG$ as the system plant (using the plant-controller duality in SISO systems), we then perform balanced truncation and modal reduction (each reducing the system order by 1) on the stable part of $\trK$ as illustrated in Algorithm \ref{algorithm:balanced_truncation_stable_unstable} and Algorithm \ref{algorithm:modal_truncation}, yielding the reduced order controllers (of order 3) $\trK_{r,\mathrm{bt}}$ and $\trK_{r,\mathrm{mt}}$ respectively. As the box plot in \cref{fig:bt_mt_comparison_boxplot_cost} illustrates, balanced truncation tends to yield reduced-order controllers that essentially has the same LQG cost as the original controller, while modal truncation has a larger spread. For a reason why this might be true, consider the boxplot in \cref{fig:bt_mt_comparison_boxplot_delta}, which also indicates that $\norm*{\trDelta_{r,\mathrm{mt}}}_{\mathcal{H}_\infty}$ has significantly more variability than $\norm*{\trDelta_{r,\mathrm{bt}}}_{\mathcal{H}_\infty}$. This suggests a positive relationship between the size of the truncated component  $\norm*{\trDelta}_{\mathcal{H}_\infty}$, and the deviation of the truncated controller cost from the original controller's cost (in either direction). For the case where $J(\trK_r) > J(\trK)$, this is consistent with our result in \cref{theorem:stable_K_truncation_result}, which states that upper bound on $J(\trK_r) - J(\trK)$ is tighter when $\norm*{\trK - \trK_r}_{\mathcal{H}_\infty}$ is smaller. 

For the practitioner, this suggests that it may be worthwhile to consider both modal and balanced truncation approaches, since for some systems modal truncation may yield relatively large improvements in LQG cost, while for some balanced truncation may be a better choice.

\bibliographystyle{IEEEtran}
\bibliography{biblio}

\end{document}